\theoremstyle{plain}
\newtheorem{rem}[theorem]{\hspace{1mm}Remark}
\newtheorem{assumption}[theorem]{Assumption}
\tikzset{
        hatch distance/.store in=\hatchdistance,
        hatch distance=5pt,
        hatch thickness/.store in=\hatchthickness,
        hatch thickness=5pt
        }
\pgfqpoint{\hatchdistance}{\hatchdistance}}
\newcommand{\mm}[1]{{\color{black}{#1}}}
\newcommand{\rmm}[1]{{\color{black}{#1}}}
\begin{document}
 

\headers{GFEM for heterogeneous reaction-diffusion equations}{C. P. Ma, and M. J. Melenk} 
 
\title{Exponential convergence of a generalized FEM for heterogeneous reaction-diffusion equations}

\author{Chupeng Ma\thanks 
 {Institute of Scientific Research, Great Bay University, Songshan Lake International Innovation Entrepreneurship A5, Dongguan 523000, China (\email{chupeng.ma@gbu.edu.cn}).}
\and J. M. Melenk\thanks{Institut f\"{u}r Analysis und Scientific Computing, Vienna University of Technology, Wiedner Hauptstrasse 8-10, 1040 Wien, Austria (\email{melenk@tuwien.ac.at}).}}

\maketitle
\begin{abstract}
A generalized finite element method is proposed for solving a heterogeneous reaction-diffusion equation with a singular perturbation parameter $\varepsilon$, based on locally approximating the solution on each subdomain by solution of a local reaction-diffusion equation and eigenfunctions of a local eigenproblem. These local problems are posed on some domains slightly larger than the subdomains with oversampling size $\delta^{\ast}$. The method is formulated at the continuous level as a direct discretization of the continuous problem and at the discrete level as a coarse-space approximation for its standard FE discretizations. Exponential decay rates for local approximation errors with respect to $\delta^{\ast}/\varepsilon$ and $\delta^{\ast}/h$ (at the discrete level with $h$ denoting the fine FE mesh size) and with the local degrees of freedom are established. In particular, it is shown that the method at the continuous level converges uniformly with respect to $\varepsilon$ in the standard $H^{1}$ norm, and that if the oversampling size is relatively large with respect to $\varepsilon$ and $h$ (at the discrete level), the solutions of the local reaction-diffusion equations provide good local approximations for the solution and thus the local eigenfunctions are not needed. Numerical results are provided to verify the theoretical results. 

\end{abstract}
\begin{keywords}
generalized finite element method, multiscale method, reaction-diffusion equation, singular perturbation, local approximation
\end{keywords}
\begin{AMS}
65M60, 65N15, 65N55
\end{AMS}

\section{Introduction}
Singularly perturbed reaction-diffusion equations has been extensively studied in the numerical analysis community; see, e.g., \cite{kadalbajoo2010brief,roos2008robust} and the references therein. It is well known that the solution of these equations typically exhibits sharp boundary layers, making the numerical approximation notoriously difficult. Significant research efforts have focused on devising parameter-robust numerical methods, i.e., methods with a uniform convergence rate with respect to the singular perturbation parameter. Such methods have been commonly designed by using layer-adapted meshes, such as the Bakhvalov mesh \cite{bakhvalov1969optimization}, the Shishkin mesh \cite{shishkin1989grid}, and the Spectral Boundary Layer mesh in the context of the p/hp-version FEM \cite{melenk1997robust,melenk2002hp}, and uniform convergence rates were typically obtained for problems with sufficiently smooth data. In recent years, parameter robust methods with error estimates in a so-called balanced norm have attracted considerable research interest \cite{lin2012balanced,melenk2016robust,roos2015convergence}. 

However, although singularly perturbed reaction-diffusion equations have already been intensively studied for decades and numerous numerical methods have been developed, the existing works, almost without exception, only dealt with problems with constant or of smooth diffusion coefficients. This is mainly due to the fact that the solution is typically required to be sufficiently smooth in (parameter-uniform) error estimates of these methods. The singularly perturbed heterogeneous reaction-diffusion problems, possibly with a multiscale character in the diffusion coefficient, have received little attention. Such problems arise naturally in several applications, e.g., in implicit time-discretizations with small time steps of wave or parabolic problems in heterogeneous media, and in the numerical solution of a so-called spectral fractional diffusion equation with a heterogeneous diffusion coefficient \cite{banjai2020exponential}. Compared with the constant coefficient case, the numerical approximation and analysis of singularly perturbed problems with highly varying coefficients are much more challenging, due to the following two reasons. First, in addition to resolving the layers with strongly refined meshes near the boundary of the domain, it also requires a very fine mesh in the interior to resolve the strongly heterogeneous diffusion coefficient, making a direct discretization with an acceptable accuracy prohibitively expensive computationally. Second, the regularity of the solution of such problems is typically too low to derive (uniform) error estimates using existing numerical analysis techniques. 

Numerical multiscale methods, mostly developed in the framework of Galerkin methods, have proved effective in addressing the difficulty associated with rough coefficients in PDEs, particularly elliptic type PDEs. In contrast to standard FEMs using classical polynomial basis functions, numerical multiscale methods use problem-dependent, precomputed local basis functions that encode the fine-scale information of the solution, leading to a much higher computational efficiency as significantly fewer basis functions are needed for comparable accuracy. For heterogeneous convection-diffusion problems in the convection-dominated regime, various multiscale methods have been developed, e.g., variational multiscale methods \cite{maalqvist2011multiscale}, (generalized) multiscale finite element methods \cite{le2017numerical,park2004multiscale,zhao2022constraint}, multiscale discontinuous Galerkin methods \cite{kim2014multiscale}, heterogeneous multiscale methods \cite{abdulle2014discontinuous}, multiscale hybrid-mixed methods \cite{harder2015multiscale}, and multiscale stabilization methods \cite{chung2020multiscale,calo2016multiscale}. Most of the existing works on this subject focused on algorithm development and rigorous theoretical analysis is much less common. In contrast, there is little work on multiscale methods for reaction-dominated reaction-diffusion problems with heterogeneous coefficients, although the idea of numerical multiscale methods has been employed to construct stabilized and enriched FEMs for standard singularly perturbed reaction-diffusion problems \cite{franca2005convergence,franca2005towards,fernando2012numerical}. This paper aims to fill the gap. 

We are concerned with a particular multiscale method, the Multiscale Spectral Generalized Finite Element Method (MS-GFEM). The method was first introduced in the pioneering work \cite{babuska2011optimal} for solving heterogeneous diffusion problems and was later extended to heterogeneous elasticity problems \cite{babuvska2014machine} and parabolic problems \cite{schleuss2022optimal}. Developed in the framework of the partition of unity method \cite{babuvska1997partition}, the MS-GFEM builds optimal local approximation spaces from eigenfunctions of specially designed local eigenproblems. It was rigorously proved that the local spectral basis, augmented with the solution of a local problem involving the same differential operator as the original equation, can approximate the exact solution locally with error decaying exponentially with respect to the local degrees of freedom. A crucial ingredient of the method that triggers the exponential error decay is the \emph{oversampling} technique, i.e., the local problems are solved on some domains slighter larger than the preselected subdomains. In our recent works \cite{ma2022novel,ma2021error}, new optimal local approximations spaces with significant advantages were constructed for the MS-GFEM using new local eigenproblems involving the partition of unity function, and error estimates for the MS-GFEM in the fully discrete setting were established. More recently \cite{ma2021wavenumber}, the method was extended to solve strongly indefinite problems, i.e., heterogeneous Helmholtz problem with high wave numbers, and similar theoretical results were obtained. 

In this paper, we use the MS-GFEM to solve singularly-perturbed reaction-diffusion equations with rough diffusion coefficients at both continuous and discrete levels. The method at the continuous level delivers a multiscale discretization of the problem and at the discrete level, it can be seen as a non-iterative domain decomposition method for solving linear systems resulting from standard FE discretizations of the continuous problem. A rigorous parameter-explicit convergence analysis of the method at both levels is derived, with a focus on local approximation error estimates. The presence of the singular perturbation parameter $\varepsilon$ makes the method behave significantly differently from its usual behavior for classical diffusion problems, and makes the associated analysis much more involved, especially in the discrete setting. On one hand, at the continuous level, singularly perturbed reaction-diffusion equations exhibit a much stronger local property than classical diffusion equations, in the sense that (local) boundary conditions can only affect the solution in a very thin layer of width $O(\varepsilon)$. This local property greatly alleviates the effect of incorrect boundary conditions for the local solution i.e., the solution of the local reaction-diffusion problem, and thus makes it a good local approximation of the exact solution even with a small oversampling size. On the other hand, at the discrete level, the local property is weakened due to spatial discretization, and the performance of the method crucially depends on the relations among $\varepsilon$, the mesh size $h$ of the fine-scale FE discretization, and the oversampling size $\delta^{\ast}$, which requires a careful analysis.

At the continuous level, we prove that the local approximation error in each subdomain in the standard $H^{1}$ norm can be bounded independent of $\varepsilon$ and decays exponentially. More precisely, we prove that it decays exponentially with respect to the local degrees of freedom when the local spectral basis is used, and decays exponentially with respect to $\delta^{\ast}/\varepsilon$ when the local solution alone is used for the local approximation. Therefore, when $\delta^{\ast}/\varepsilon$ is large, the solution of the local problem alone can approximate the exact solution locally very well and the local spectral basis is not needed. At the discrete level, in the asymptotic regime, i.e., $h\leq \varepsilon$, we obtain the same theoretical results as those at the continuous level. However, in the preasymptotic regime $\varepsilon \leq h$, which is of more interest practically, the situation is very different. In this scenario, it is shown that the local approximation error depends critically on $\delta^{\ast}/h$ instead of $\delta^{\ast}/\varepsilon$. In particular, when approximating the fine-scale FE solution locally using only the solution of the (discrete) local problem, we prove that the local error decays exponentially with respect to $\delta^{\ast}/h$, and consequently, a larger oversampling size with respect to $h$ leads to a better local approximation. Furthermore, when the local spectral basis is additionally used, the exponential decay of the local error is proved under a certain assumption involving the quantity $\delta^{\ast}/h$ on the local degrees of freedom.  

The rest of this paper is organized as follows. In \cref{sec:2}, we give the problem formulation and briefly recall the main ingredients and results of GFEM. The MS-GFEM with a detailed convergence analysis at the continuous and discrete levels are presented in \cref{sec:3,sec:4}, respectively. Numerical experiments are carried out in \cref{sec:5} to confirm the theoretical results.

\section{Problem formulation and the GFEM}\label{sec:2}
\subsection{Model problem and standard FE discretizations}
Given $0<\varepsilon \leq 1$, we consider the following reaction-diffusion equation with a heterogeneous diffusion coefficient:
\begin{equation}\label{eq:2-1}
\left\{
\begin{array}{lll}
{\displaystyle -\varepsilon^{2}\nabla\cdot (A\nabla u_{\varepsilon}) + u_{\varepsilon} = f\quad {\rm in}\;\,\, \Omega, }\\[2mm]
{\displaystyle \quad \qquad \qquad \qquad \quad u_{\varepsilon} = 0 \quad {\rm on}\;\,\,\Gamma,}
\end{array}
\right.
\end{equation}
where $\Omega\subset\mathbb{R}^{d}$ ($d=2,3$) is a bounded Lipschitz polyhedral domain with $\Gamma=\partial \Omega$. Throughout this paper, we make the following assumption on the right-hand side $f$ and the coefficient $A$:
\begin{assumption}\label{ass:2-1}
$f\in L^{2}(\Omega)$. $A\in (L^{\infty}(\Omega))^{d\times d}$ is pointwise symmetric, and there exists $0<{a_{\rm min}} < {a_{\rm max}}< +\infty$ such that
\begin{equation}\label{eq:2-2}
{a_{\rm min}} |{\bm \xi}|^{2} \leq A({\bm x}) {\bm \xi}\cdot {\bm \xi} \leq {a_{\rm max}} |{\bm \xi}|^{2}\quad \forall {\bm \xi}\in \mathbb{R}^{d}\quad \forall {\bm x} \in\Omega.
\end{equation}
\end{assumption}
Note that we do not assume any regularity condition other than the usual uniform ellipticity on the diffusion coefficient. Moreover, the method and the corresponding theoretical results in this paper can be extended to the case $f\in H^{-1}(\Omega)$ in a very straightforward way, but we omit this extension for ease of presentation. 

For convenience, let us define the "energy" inner-product and the associated "energy" norm on $H^{1}(D)$ for any subdomain $D\subset \Omega$. For any $u,v\in H^{1}(D)$,
\begin{equation}\label{eq:2-3}
a_{\varepsilon,D}(u,v) := \varepsilon^{2}\int_{{D}} A\nabla u\cdot {\nabla v} \,d{\bm x} + \int_{{D}} uv\,d{\bm x},\quad 
\Vert u\Vert_{a,\varepsilon,{D}} := \sqrt{a_{\varepsilon,{D}}(u,u).}
\end{equation}
If $D=\Omega$, we drop the domain in the subscript and simply write $a_{\varepsilon}(\cdot,\cdot)$ and $\| \cdot\|_{a,\varepsilon}$. The weak formulation of problem \cref{eq:2-1} is: Find $u_{\varepsilon}\in H_{0}^{1}(\Omega)$ such that
\begin{equation}\label{eq:2-4}
{a}_{\varepsilon}(u_{\varepsilon},v) = F(v):=\int_{\Omega}fv\,d{\bm x}\quad \forall v\in H^{1}_{0}(\Omega).
\end{equation}
By \cref{ass:2-1} and the Lax--Milgram theorem, there exists a unique solution $u_{\varepsilon}\in H_{0}^{1}(\Omega)$ to the problem \cref{eq:2-1} with the estimate
\begin{equation}\label{eq:2-5}
\Vert u_{\varepsilon}\Vert_{a,\varepsilon}\leq \Vert f\Vert_{L^{2}(\Omega)}.
\end{equation}


Now we consider a standard FE discretization of the variational problem \cref{eq:2-4}. \mm{Let $\tau_{h} = \{K\}$ be shape-regular triangulations of $\Omega$}. The mesh size $h$ is given by $h:=\max_{K\in\tau_{h}}h_{K}$ with $h_{K}$ denoting the diameter of element $K$, and $h$ is assumed to be sufficiently small to resolve the fine-scale details of the coefficient $A$ and the layer structures of the solution. We now consider, $V_{h}\subset H^{1}(\Omega)$, the standard finite element space of continuous piecewise polynomials of degree $r$ with respect to $\tau_{h}$, and let $V_{h,0} = V_{h}\cap H_{0}^{1}(\Omega)$. The standard Galerkin FEM for problem \cref{eq:2-4} is given by: Find $u_{h,\varepsilon}\in V_{h,0}$ such that
\begin{equation}\label{eq:2-6}
{a}_{\varepsilon}(u_{h,\varepsilon},v_{h}) = F(v_{h})\quad \forall v_{h}\in V_{h,0}.
\end{equation}
In the following, we refer to problem \cref{eq:2-6} as the fine-scale FE problem and its solution as the fine-scale FE solution.

\subsection{GFEM at continuous and discrete levels}
In this subsection we briefly describe the GFEM at both continuous and discrete levels for solving problem \cref{eq:2-1} and its standard FE approximation \cref{eq:2-6}, respectively. Although the classical GFEM was formulated at the continuous level for directly discretizing PDEs, it can be easily adapted in the discrete setting to yield a coarse-space approximation for the fine-scale FE problem. The latter has been widely used in numerical multiscale methods and domain decomposition methods \cite{efendiev2011multiscale,spillane2014abstract}. 

The GFEM starts with an overlapping decomposition of the domain $\Omega$. Let $\{ \omega_{i} \}_{i=1}^{M}$ be a collection of connected open subsets of $\Omega$ satisfying $\cup_{i=1}^{M} \omega_{i} = \Omega$ and a pointwise overlap condition:
\begin{equation}\label{eq:2-6-0}
\exists\,\kappa \in \mathbb{N}\qquad \forall\,{\bm x}\in \Omega\qquad {\rm card}\{i\;|\;{\bm x}\in \omega_{i}\}\leq \kappa.
\end{equation}
Let $\{ \chi_{i} \}_{i=1}^{M}$ be a partition of unity subordinate to the open cover $\{\omega_{i}\}_{i=1}^{M}$ that satisfies:
\begin{equation}\label{eq:2-7}
\begin{array}{lll}
{\displaystyle 0\leq \chi_{i}({\bm x})\leq 1,\quad \sum_{i=1}^{M}\chi_{i}({\bm x}) =1, \quad \forall \,{\bm x}\in \Omega,}\\[4mm]
{\displaystyle \chi_{i}({\bm x})= 0, \quad \forall \,{\bm x}\in \Omega/\omega_{i}, \quad i=1,\cdots,M,}\\[2mm]
{\displaystyle \chi_{i}\in W^{1,\infty}(\omega_{i}),\;\; \|\nabla \chi_{i} \|_{L^{\infty}(\omega_i)} \leq \frac{C_{1}}{{\rm diam}\,(\omega_{i})},\quad i=1,\cdots,M.}
\end{array}
\end{equation}
To proceed, for each $i=1,\cdots,M$, let us define the following local spaces:
\begin{equation}\label{eq:2-7-0}
H^{1}_{\Gamma}(\omega_{i}) = \big\{v\in H^{1}(\omega_{i})\;:\;v = 0\;\;{\rm on}\;\,  \partial\omega_{i} \cap \Gamma\big\},\;\; V_{h,\Gamma}(\omega_i) = \{v_{h}|_{\omega_i}:v_{h}\in V_{h,0} \}.
\end{equation}
Moreover, let $I_{h}:C(\overline{\Omega})\rightarrow V_{h}$ denote the standard Lagrange interpolation operator.

For each $i=1,\cdots,M$, let $S_{n_{i}}(\omega_{i})\subset H^{1}_{\Gamma}(\omega_{i})$ be a local approximation space of dimension $n_{i}$ and $u_{i}^{p}\in H^{1}_{\Gamma}(\omega_{i})$ be a local particular function. The classical GFEM defines the global particular function and the global approximation space by gluing the local components together using the partition of unity:
\begin{equation}\label{eq:2-8}
\begin{array}{lll}
{\displaystyle u^{p} =\sum_{i=1}^{M}\chi_{i}u^{p}_{i},\quad  S_{n}(\Omega) =\Big\{\sum_{i=1}^{M}\chi_{i}\phi_{i}\,:\, \phi_{i}\in S_{n_{i}}(\omega_{i})\Big\}, }
\end{array}
\end{equation}
and then seeks the finite-dimensional Galerkin approximation of problem \cref{eq:2-4} by finding $u_{\varepsilon}^{G}\in u^{p}+S_{n}(\Omega)$ such that
\begin{equation}\label{eq:2-9}
{a}_{\varepsilon}(u_{\varepsilon}^{G}, v) = F(v)\quad \forall v\in S_{n}(\Omega).
\end{equation}

In a FE setting, with (discrete) local approximation spaces $S_{h,n_{i}}(\omega_{i})\subset V_{h,\Gamma}(\omega_{i})$ and (discrete) local particular functions $u_{h,i}^{p}\in V_{h,\Gamma}(\omega_{i})$, the global particular function and the global approximation space are defined in a similar manner:
\begin{equation}\label{eq:2-9-1}
\begin{array}{lll}
{\displaystyle u_{h}^{p} =\sum_{i=1}^{M}I_{h}\big(\chi_{i}u^{p}_{h,i}\big),\quad  S_{h,n}(\Omega) =\Big\{\sum_{i=1}^{M}I_{h}\big(\chi_{i}\phi_{h,i}\big)\,:\, \phi_{h,i}\in S_{h,n_{i}}(\omega_{i})\Big\}. }
\end{array}
\end{equation}
Here the multiplications of the partition of unity functions and the local functions are interpolated into the FE space to ensure a conforming approximation. The GFEM based coarse-space approximation of the fine-scale FE problem \cref{eq:2-6} is defined by: Find $u_{h,\varepsilon}^{G}\in u_{h}^{p}+S_{h,n}(\Omega)$ such that
\begin{equation}\label{eq:2-9-2}
{a}_{\varepsilon}(u_{h,\varepsilon}^{G}, v_{h}) = F(v_{h})\quad \forall v_{h}\in S_{h,n}(\Omega).
\end{equation}

It is clear that the GFEM solutions $u_{\varepsilon}^{G}$ and $u_{h,\varepsilon}^{G}$ are the best approximations of $u_{\varepsilon}$ and $u_{h,\varepsilon}$ in $u^{p}+S_{n}(\Omega)$ and $u_{h}^{p}+S_{h,n}(\Omega)$, respectively, i.e.,
\begin{equation}\label{eq:2-10}
\begin{array}{lll}
{\displaystyle \displaystyle \big\Vert u_{\varepsilon} - u_{\varepsilon}^{G} \big\Vert_{a,\,\varepsilon} = \inf_{\varphi \in u^{p}+S_{n}(\Omega)}\big\Vert u_{\varepsilon} - \varphi \big\Vert_{a,\,\varepsilon},}\\[3mm]
{\displaystyle \displaystyle \big\Vert u_{h,\varepsilon} - u_{h,\varepsilon}^{G} \big\Vert_{a,\,\varepsilon} = \inf_{\varphi_{h} \in u_{h}^{p}+S_{h,n}(\Omega)}\big\Vert u_{h,\varepsilon} - \varphi_{h} \big\Vert_{a,\,\varepsilon}.}
\end{array}
\end{equation}

The following theorem provides the theoretical foundation for the GFEM at both continuous and discrete levels by showing that the global approximation error of the method is determined by local approximation errors. 

\begin{theorem}[\cite{ma2022novel}]\label{thm:2-0}
Let $u\in H^{1}(\Omega)$ and $u_{h}\in V_{h}$. For each $i=1,\cdots,M$, assume that 
\begin{equation}\label{eq:2-11}
\begin{array}{lll}
{\displaystyle \inf_{\varphi_{i}\in u_{i}^{p}+S_{n_i}(\omega_i)}\big\Vert \chi_{i}(u-\varphi_{i})\big\Vert_{a,\,\varepsilon,\,\omega_{i}}\leq e_{i},}\\[3mm]
{\displaystyle  \inf_{\varphi_{h,i}\in u_{h,i}^{p}+S_{h,n_i}(\omega_i)}\big\Vert I_{h}\big(\chi_{i}(u_{h}-\varphi_{h,i})\big)\big\Vert_{a,\,\varepsilon,\,\omega_{i}}\leq \widetilde{e}_{i}.}
\end{array}
\end{equation}
Then,
\begin{equation}\label{eq:2-12}
\begin{array}{lll}
{\displaystyle \inf_{\varphi\in u^{p}+S_{n}(\Omega)} \Vert u - \varphi\Vert_{a,\,\varepsilon} \leq \Big(\kappa\sum_{i=1}^{M}e^{2}_{i}\Big)^{1/2},}\\[3mm] {\displaystyle \inf_{\varphi_{h}\in u_{h}^{p}+S_{h,n}(\Omega)} \Vert u_{h} - \varphi_{h}\Vert_{a,\,\varepsilon} \leq \Big(\kappa\sum_{i=1}^{M}\widetilde{e}^{2}_{i}\Big)^{1/2}.}
\end{array}
\end{equation}
\end{theorem}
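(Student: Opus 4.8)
The plan is to establish both inequalities by the classical partition-of-unity gluing argument: from near-optimal \emph{local} competitors I build an explicit \emph{global} competitor, verify that it lies in the relevant affine space, and then convert the sum of local errors into a global energy norm using the finite-overlap bound \eqref{eq:2-6-0} together with a pointwise Cauchy--Schwarz inequality. Since the two estimates in \eqref{eq:2-12} are structurally identical, the continuous one serves as the template and the discrete one requires only bookkeeping for the interpolation operator.

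For the first estimate, fix $\eta>0$ and for each $i$ pick $\varphi_i=u_i^p+\phi_i$ with $\phi_i\in S_{n_i}(\omega_i)$ so that $\|\chi_i(u-\varphi_i)\|_{a,\varepsilon,\omega_i}\le e_i+\eta$. Define
\begin{equation*}
\varphi:=\sum_{i=1}^M\chi_i\varphi_i=u^p+\sum_{i=1}^M\chi_i\phi_i\in u^p+S_n(\Omega),
\end{equation*}
so that $\varphi$ is admissible in the infimum on the left of \eqref{eq:2-12}. Using $\sum_i\chi_i\equiv1$ from \eqref{eq:2-7}, the global error telescopes into localized pieces,
\begin{equation*}
u-\varphi=\sum_{i=1}^M\chi_i(u-\varphi_i)=:\sum_{i=1}^M w_i,\qquad \operatorname{supp}w_i\subset\overline{\omega_i}.
\end{equation*}
The heart of the argument is then the observation that at every point $\bm x\in\Omega$ at most $\kappa$ of the functions $w_i$ are nonzero. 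Applying Cauchy--Schwarz pointwise --- in the $A(\bm x)$-weighted inner product for the gradient contribution and in the Euclidean one for the zeroth-order contribution --- gives $|\sum_i w_i(\bm x)|^2\le\kappa\sum_i|w_i(\bm x)|^2$ and the analogous bound for $A\nabla(\sum_i w_i)\cdot\nabla(\sum_i w_i)$. Integrating over $\Omega$ and recombining the two contributions into the local energy norms yields
\begin{equation*}
\|u-\varphi\|_{a,\varepsilon}^2\le\kappa\sum_{i=1}^M\|w_i\|_{a,\varepsilon,\omega_i}^2\le\kappa\sum_{i=1}^M(e_i+\eta)^2.
\end{equation*}
Letting $\eta\to0$ and taking the infimum over admissible $\varphi$ proves the first line of \eqref{eq:2-12}.

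For the discrete estimate I would repeat the construction with $\varphi_{h,i}=u_{h,i}^p+\phi_{h,i}$, $\phi_{h,i}\in S_{h,n_i}(\omega_i)$, and set $\varphi_h:=\sum_i I_h(\chi_i\varphi_{h,i})\in u_h^p+S_{h,n}(\Omega)$. The only genuinely new ingredient is the telescoping identity: here one uses the linearity of $I_h$, the partition-of-unity property, and the fact that $I_h$ reproduces $V_h$ (so $I_h u_h=u_h$) to write $u_h=I_h\big(\sum_i\chi_i u_h\big)=\sum_i I_h(\chi_i u_h)$, whence $u_h-\varphi_h=\sum_i I_h(\chi_i(u_h-\varphi_{h,i}))$. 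Each summand is $I_h$ applied to a function supported in $\omega_i$; since the $\omega_i$ are resolved by the mesh and $\chi_i$ vanishes at every node outside $\omega_i$, the interpolant $I_h(\chi_i(u_h-\varphi_{h,i}))$ is again supported in $\overline{\omega_i}$, so the very same finite-overlap and Cauchy--Schwarz estimate applies verbatim and delivers the second line of \eqref{eq:2-12}.

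I expect the main obstacle to be precisely this support and interpolation bookkeeping in the discrete case, rather than any analytic difficulty: one must ensure that introducing $I_h$ neither destroys the telescoping (which relies on $I_h$ being linear and the identity on $V_h$) nor enlarges the overlap, so that the \emph{same} constant $\kappa$ governs both estimates. The pointwise Cauchy--Schwarz step and the splitting of the energy norm into its $L^2$ and $A$-weighted gradient parts are then routine once the localized decomposition $\sum_i w_i$ is in hand.
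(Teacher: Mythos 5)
Your proof is correct, and it is essentially the standard argument: the paper itself does not prove \cref{thm:2-0} but cites it from \cite{ma2022novel}, where the proof is exactly this partition-of-unity gluing — build the global competitor $\sum_i\chi_i\varphi_i$ (resp.\ $\sum_i I_h(\chi_i\varphi_{h,i})$), telescope via $\sum_i\chi_i\equiv 1$, and use the finite-overlap bound \cref{eq:2-6-0} with pointwise Cauchy--Schwarz. Your handling of the discrete case (linearity of $I_h$, $I_h u_h=u_h$, and the support bookkeeping under the mesh-resolution assumption) is precisely the extra care the discrete version requires, so nothing is missing.
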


Combining \cref{eq:2-10} and \cref{thm:2-0}, we clearly see that the key to achieving a good accuracy for the GFEM is a suitable selection of the local particular functions and the local approximation spaces such that the exact solution or the fine-scale FE solution can be well approximated locally. In MS-GFEM, the local particular functions are defined as solutions of local boundary value problems with artificial boundary conditions and the local approximation spaces are built from eigenfunctions of local eigenproblems defined on generalized harmonic spaces, leading to local approximations with errors decaying nearly exponentially with the local degrees of freedom. The construction of these local approximations for the MS-GFEM adapted to singularly-perturbed reaction-diffusion problems at the continuous and discrete levels are detailed in \cref{sec:3,sec:4}, respectively. 

\mm{Throughout this paper, we will frequently use the following result: Let $D^{\ast}\subset D$ be open connected subsets of $\Omega$ with $\delta={\rm dist}({D},\,\partial {D}^{\ast}\setminus\partial \Omega)>0$. Then, there exists $\eta\in C^{1}(\overline{D^{\ast}})$ such that 
\begin{equation}\label{eq:3-0}
\eta = 0 \;\;{\rm on}\;\; \partial {D}^{\ast}\setminus\partial \Omega; \quad \eta = 1 \;\;{\rm in} \;\;{D}; \quad |\nabla {\eta}| \leq C_{d}/\delta,
\end{equation}
where $C_{d}>0$ only depends on $d$.}

\section{Continuous MS-GFEM}\label{sec:3}
In this section, we shall construct the local particular functions and the local approximation spaces for the MS-GFEM in the continuous setting. Exponential and $\varepsilon$-explicit upper bounds for the local approximation errors are derived. 

\subsection{Local particular functions and local approximation spaces}
\begin{figure}\label{fig:3-1}
\centering
\includegraphics[scale=0.5]{./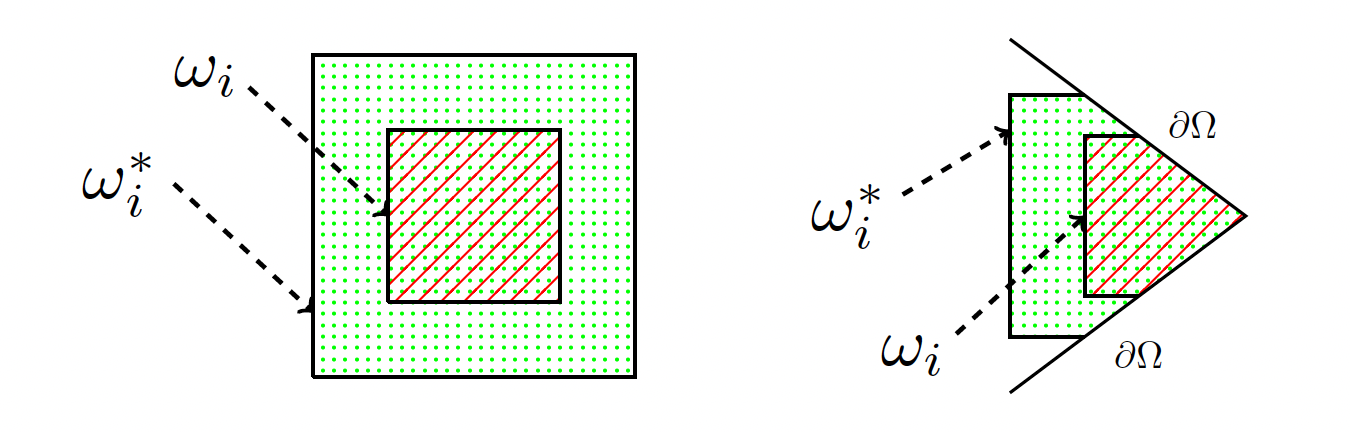}
\caption{Illustration of a subdomain $\omega_i$ that lies within the interior of $\Omega$ (left) and one that intersects the outer boundary $\partial \Omega$ (right) with associated oversampling domains $\omega_i^{\ast}$.}
\end{figure}

A key ingredient of the MS-GFEM is the oversampling technique. More specifically, the local particular functions and the local spectral basis functions are first constructed on a larger domain, often referred to as the oversampling domain, and then restricted to the corresponding subdomain. For each subdomain $\omega_{i}$, we denote by $\omega_{i}^{\ast}$ the associated oversampling domain with a Lipschitz boundary that satisfies $\omega_{i} \subset \omega_{i}^{\ast}\subset\Omega$, as illustrated in \cref{fig:3-1}. As we will see later, the size of these oversampling domains has a crucial effect on the accuracy of local approximations. 

To define the local particular function on a subdomain $\omega_i$, let us consider the following local reaction-diffusion problem on the oversampling domain $\omega_{i}^{\ast}$:
\begin{equation}\label{eq:3-3}
\left\{
\begin{array}{lll}
{\displaystyle -\varepsilon^{2}\nabla \cdot(A\nabla \psi_{\varepsilon,i}) + \psi_{\varepsilon,i}= f \,\qquad {\rm in}\;\, \omega^{\ast}_{i} , }\\[2mm]
{\displaystyle \qquad \quad\qquad\;\;{\bm n}\cdot A\nabla\psi_{\varepsilon,i} = 0\,\qquad {\rm on}\;\, \partial \omega^{\ast}_{i} \cap \Omega,}\\[2mm]
{\;\,\qquad \quad \,\quad\qquad \qquad \displaystyle \psi_{\varepsilon,i} = 0\qquad \,{\rm on}\;\, \partial \omega^{\ast}_{i} \cap \Gamma,}
\end{array}
\right.
\end{equation}
where ${\bm n}$ denotes the unit outward normal. Denoting by
\begin{equation}
H^{1}_{\Gamma}(\omega^{\ast}_{i}) = \big\{v\in H^{1}(\omega^{\ast}_{i})\;:\;v = 0\;\;{\rm on}\;\,  \partial\omega^{\ast}_{i} \cap \Gamma\big\}, 
\end{equation}
the weak formulation of \cref{eq:3-3} is to find $\psi_{\varepsilon,i}\in H^{1}_{\Gamma}(\omega^{\ast}_{i})$ such that
\begin{equation}\label{eq:3-4}
{a}_{\varepsilon,\,\omega^{\ast}_{i}}(\psi_{\varepsilon,i}, v) = F_{\omega_{i}^{\ast}}(v):=\int_{\omega_{i}^{\ast}}fv\,d{\bm x}\qquad \forall v\in H^{1}_{\Gamma}(\omega^{\ast}_{i}).
\end{equation}
It is clear that the weak formulation \cref{eq:3-4} has a unique solution. Now we can define the local particular functions.
\begin{definition}[\textbf{Local particular functions}]\label{def:3-1}
The local particular function on $\omega_{i}$ is defined as $u_{i}^{p} = \psi_{\varepsilon,i}|_{\omega_{i}}$, where $\psi_{\varepsilon,i}\in H^{1}_{\Gamma}(\omega^{\ast}_{i})$ is the solution of \cref{eq:3-3}.
\end{definition}
\begin{rem}
The definition of local particular functions is not unique. Indeed, for the subsequent analysis, it is sufficient that ${a}_{\varepsilon,\,\omega^{\ast}_{i}}(\psi_{\varepsilon,i}, v) = F_{\omega_{i}^{\ast}}(v)$ for all $v\in H^{1}_{0}(\omega^{\ast}_{i})$. Thus, other interior boundary conditions for $\psi_{\varepsilon,i}$ can be imposed. In general, interior boundary conditions that make the local problems better behaved are preferred.
\end{rem}

It turns out that the local particular function $u_{i}^{p}$ is a good approximation of the exact solution $u_{\varepsilon}$ locally on $\omega_{i}$ if $\varepsilon$ is sufficiently small. In fact, we have
\begin{theorem}\label{thm:3-0}
For any $\varepsilon\in (0,1]$, it holds that
\begin{equation}\label{eq:3-9-1}
\big\Vert \chi_{i}(u_{\varepsilon}-u_{i}^{p})\big\Vert_{a,\,\varepsilon,\,\omega_{i}} \leq \varepsilon\,a_{\rm max}^{1/2} \Vert \nabla\chi_{i}\Vert_{L^{\infty}(\omega_{i})} \big(\Vert u_{\varepsilon}\Vert_{L^{2}(\omega_{i})} + \Vert f\Vert_{L^{2}(\omega^{\ast}_{i})}\big). 
\end{equation}
If, in addition, $\delta^{\ast}_{i} := {\rm dist}(\omega_i,\partial\omega_{i}^{\ast}\setminus\partial \Omega)>0$ and $\gamma^{\ast}_{i} := \delta^{\ast}_{i}/(e\varepsilon a_{\rm max}^{1/2}\mm{C_{d}})>1$, \mm{where $C_{d}>0$ is given by \cref{eq:3-0}}, then
\begin{equation}\label{eq:3-9-2}
\begin{array}{lll}
{\displaystyle \big\Vert \chi_{i}(u_{\varepsilon}-u_{i}^{p})\big\Vert_{a,\,\varepsilon,\,\omega_{i}}\leq \varepsilon\,a_{\rm max}^{1/2} \,e^{1-\gamma^{\ast}_{i}} \,\Vert \nabla\chi_{i}\Vert_{L^{\infty}(\omega_{i})} \big(\Vert u_{\varepsilon}\Vert_{L^{2}(\omega_{i}^{\ast})} + \Vert f\Vert_{L^{2}(\omega_{i}^{\ast})}\big). }
\end{array}
\end{equation}
\end{theorem}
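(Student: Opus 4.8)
The plan is to work throughout with the difference $w := u_\varepsilon - \psi_{\varepsilon,i}$ on the oversampling domain $\omega_i^\ast$ and to exploit that it solves the \emph{homogeneous} local problem. First I would show that $a_{\varepsilon,\omega_i^\ast}(w,v)=0$ for all $v\in H_0^1(\omega_i^\ast)$: extending such a $v$ by zero to an element of $H_0^1(\Omega)$ and subtracting the weak formulation \cref{eq:3-4} from \cref{eq:2-4} (both right-hand sides reduce to $\int_{\omega_i^\ast} fv$) gives the claim. This ``local $a_\varepsilon$-harmonicity'' of $w$ is the structural fact driving both estimates.

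For the first bound \cref{eq:3-9-1}, I would test this homogeneous identity with $v=\chi_i^2 w$. This is admissible: $w$ vanishes on $\partial\omega_i^\ast\cap\Gamma$ (since both $u_\varepsilon$ and $\psi_{\varepsilon,i}$ do there), while $\chi_i$ vanishes on $\partial\omega_i^\ast\cap\Omega$ because $\omega_i\subset\omega_i^\ast$ forces $\partial\omega_i^\ast\cap\Omega\subset\Omega\setminus\omega_i$; hence $\chi_i^2 w\in H_0^1(\omega_i^\ast)$. Since $\chi_i$ is supported in $\omega_i$, all integrals localize to $\omega_i$. Writing out $\|\chi_i w\|_{a,\varepsilon,\omega_i}^2$ via $\nabla(\chi_i w)=\chi_i\nabla w+w\nabla\chi_i$ and substituting the tested identity $\varepsilon^2\int_{\omega_i}\chi_i^2 A\nabla w\cdot\nabla w+\int_{\omega_i}\chi_i^2 w^2=-2\varepsilon^2\int_{\omega_i}\chi_i w\,A\nabla w\cdot\nabla\chi_i$, the two cross terms cancel \emph{exactly}, leaving the clean identity $\|\chi_i w\|_{a,\varepsilon,\omega_i}^2=\varepsilon^2\int_{\omega_i} w^2\,A\nabla\chi_i\cdot\nabla\chi_i$. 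Bounding $A\nabla\chi_i\cdot\nabla\chi_i\le a_{\rm max}\|\nabla\chi_i\|_{L^\infty(\omega_i)}^2$, using the energy bound $\|\psi_{\varepsilon,i}\|_{a,\varepsilon,\omega_i^\ast}\le\|f\|_{L^2(\omega_i^\ast)}$ (so $\|\psi_{\varepsilon,i}\|_{L^2(\omega_i)}\le\|f\|_{L^2(\omega_i^\ast)}$), and the triangle inequality $\|w\|_{L^2(\omega_i)}\le\|u_\varepsilon\|_{L^2(\omega_i)}+\|\psi_{\varepsilon,i}\|_{L^2(\omega_i)}$ then yields \cref{eq:3-9-1}.

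For the exponential estimate \cref{eq:3-9-2}, I would retain the inequality $\|\chi_i w\|_{a,\varepsilon,\omega_i}\le\varepsilon a_{\rm max}^{1/2}\|\nabla\chi_i\|_{L^\infty(\omega_i)}\|w\|_{L^2(\omega_i)}$ from the previous step and separately prove the decay $\|w\|_{L^2(\omega_i)}\le e^{1-\gamma_i^\ast}\|w\|_{L^2(\omega_i^\ast)}$ by an iterated Caccioppoli argument. Fixing an integer $m$, I interpose nested sets $\omega_i=\Omega_0\subset\Omega_1\subset\cdots\subset\Omega_m=\omega_i^\ast$ with ${\rm dist}(\Omega_{j-1},\partial\Omega_j\cap\Omega)\ge\delta_i^\ast/m$, and for each $j$ pick a cutoff $\eta_j$ from \cref{eq:3-0} equal to $1$ on $\Omega_{j-1}$, vanishing on $\partial\Omega_j\cap\Omega$, with $|\nabla\eta_j|\le C_d m/\delta_i^\ast$. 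Testing the homogeneous equation with $\eta_j^2 w$ and applying Cauchy--Schwarz then Young's inequality absorbs the gradient term and leaves the one-step contraction $\|w\|_{L^2(\Omega_{j-1})}\le q\,\|w\|_{L^2(\Omega_j)}$ with $q=\varepsilon a_{\rm max}^{1/2}C_d m/\delta_i^\ast=m/(e\gamma_i^\ast)$. Iterating over $j=1,\dots,m$ gives $\|w\|_{L^2(\omega_i)}\le(m/(e\gamma_i^\ast))^m\|w\|_{L^2(\omega_i^\ast)}$, and finally $\|w\|_{L^2(\omega_i^\ast)}\le\|u_\varepsilon\|_{L^2(\omega_i^\ast)}+\|f\|_{L^2(\omega_i^\ast)}$ as before.

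The crux, and the main obstacle, is the optimization over the number of layers $m$. The factor $(m/(e\gamma_i^\ast))^m$ is minimized near the non-integer value $m=\gamma_i^\ast$, so I would take $m=\lfloor\gamma_i^\ast\rfloor\ge 1$ (this is precisely where the hypothesis $\gamma_i^\ast>1$ enters, guaranteeing at least one layer). Then $q\le 1/e$ and $q^m\le e^{-m}\le e^{-(\gamma_i^\ast-1)}=e^{1-\gamma_i^\ast}$, producing exactly the stated rate. Care is also needed to confirm that each $\eta_j^2 w$ is a legitimate element of $H_0^1(\omega_i^\ast)$ — guaranteed by the vanishing of $w$ on $\partial\omega_i^\ast\cap\Gamma$ and of $\eta_j$ on the interior transition boundaries — and to track the per-step gradient cost $C_d m/\delta_i^\ast$ consistently so that the factor $e$ in the definition of $\gamma_i^\ast$ cancels cleanly against the optimal layer count.
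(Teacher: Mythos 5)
Your proposal is correct and follows essentially the same route as the paper: establishing that $w=u_\varepsilon-\psi_{\varepsilon,i}$ is generalized $a_\varepsilon$-harmonic, your test with $\chi_i^2 w$ and exact cancellation is precisely the Caccioppoli identity of \cref{lem:3-1} (proved the same way in the appendix), and your layered iteration with $m=\lfloor\gamma_i^\ast\rfloor$ cutoffs is exactly the sharper Caccioppoli estimate of \cref{lem:3-4}, where the paper takes $N=\lfloor \delta/(e\varepsilon a_{\rm max}^{1/2}C_d)\rfloor$. The only difference is presentational: you inline the two lemmas into a single argument rather than invoking them separately, and the constants, layer optimization, and final triangle-inequality step all match the paper's proof.
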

Note that \cref{eq:3-9-1} holds in the non-oversampling case, i.e., $\omega^{\ast}_{i} = \omega_{i}$. The proof of \cref{thm:3-0} is postponed to the next subsection.  

If $\varepsilon$ is not sufficiently small, however, $u_{i}^{p}$ may fail to well approximate $u_{\varepsilon}|_{\omega_{i}}$. In this scenario, it is necessary to consider the residual, i.e., $u_{\varepsilon}|_{\omega_{i}}-\psi_{\varepsilon,i}|_{\omega_{i}}$. To approximate this part is the purpose of designing the local approximation space. Taking $v\in H_{0}^{1}(\omega_i^{\ast})$ in \cref{eq:2-4,eq:3-4}, we see that ${a}_{\varepsilon,\,\omega^{\ast}_{i}}(u_{\varepsilon}|_{\omega_{i}^{\ast}}-\psi_{\varepsilon,i}, v)=0$ for all $v\in H^{1}_{0}(\omega^{\ast}_{i})$. This observation motivates us to define the following \textit{generalized harmonic space}:
\begin{equation}\label{eq:3-1}
H_{a,\varepsilon}(\omega^{\ast}_{i}) = \big\{ u\in H^{1}_{\Gamma}(\omega^{\ast}_{i})\;:\;  {a}_{\varepsilon,\,\omega^{\ast}_{i}}(u, v) = 0 \quad \forall v\in H^{1}_{0}(\omega^{\ast}_{i})\big\}.
\end{equation}
It follows that $u_{\varepsilon}|_{\omega_{i}^{\ast}}-\psi_{\varepsilon,i}\in H_{a,\varepsilon}(\omega^{\ast}_{i})$. Note that $H_{a,\varepsilon}(\omega^{\ast}_{i})\subset H^{1}_{\Gamma}(\omega^{\ast}_{i})$ is a closed subspace. It turns out that $H_{a,\varepsilon}(\omega^{\ast}_{i})$ can be well approximated by a low-dimensional space locally on $\omega_i$, enabling us to construct a highly efficient local approximation space. A crucial tool to identify the low-dimensional space is the following Caccioppoli-type inequality, which is also a key to deriving exponential bounds on local approximation errors.
\begin{lemma}\label{lem:3-1}
Assume that $\eta\in W^{1,\infty}(\omega_{i}^{\ast})$ satisfying $\eta =0$ on $\partial \omega^{\ast}_{i} \cap \Omega$. Then,
\begin{equation}\label{eq:3-5}
{a}_{\varepsilon,\omega_{i}^{\ast}}(\eta u,\eta v) = \varepsilon^{2}\int_{\omega_{i}^{\ast}}(A\nabla \eta \cdot \nabla \eta) uv\,d{\bm x}\qquad \forall u, \,v\in H_{a,\varepsilon}(\omega^{\ast}_{i}).
\end{equation}
In particular, 
\begin{equation}\label{eq:3-6}
\Vert \eta u \Vert_{a,\,\varepsilon, \,\omega_{i}^{\ast}} \leq \varepsilon a_{\rm max}^{1/2} \Vert \nabla \eta \Vert_{L^{\infty}(\omega_{i}^{\ast})} \Vert u \Vert_{L^{2}(\omega_{i}^{\ast}\cap\, {\rm supp}\,(\eta))} \quad \forall u\in H_{a,\varepsilon}(\omega^{\ast}_{i}),
\end{equation}
where ${a_{\rm max}}$ is the spectral upper bound of the coefficient $A$ defined in \cref{eq:2-2}.
\end{lemma}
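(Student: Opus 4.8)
The plan is to establish the bilinear identity \cref{eq:3-5} first, and then deduce \cref{eq:3-6} by setting $v=u$ and invoking the ellipticity bound \cref{eq:2-2}. The starting point is to expand the left-hand side with the Leibniz rule $\nabla(\eta u)=u\,\nabla\eta+\eta\,\nabla u$ (and analogously for $\eta v$), using the pointwise symmetry of $A$. This splits the energy form into four contributions,
\[
a_{\varepsilon,\omega_{i}^{\ast}}(\eta u,\eta v)=\varepsilon^{2}\!\int_{\omega_{i}^{\ast}} uv\,(A\nabla\eta\cdot\nabla\eta)\,d{\bm x}+I_{1}+I_{2}+D,
\]
where $I_{1}=\varepsilon^{2}\int_{\omega_{i}^{\ast}}\eta u\,(A\nabla\eta\cdot\nabla v)\,d{\bm x}$ and $I_{2}=\varepsilon^{2}\int_{\omega_{i}^{\ast}}\eta v\,(A\nabla u\cdot\nabla\eta)\,d{\bm x}$ are the two mixed terms, and $D=\varepsilon^{2}\int_{\omega_{i}^{\ast}}\eta^{2}\,A\nabla u\cdot\nabla v\,d{\bm x}+\int_{\omega_{i}^{\ast}}\eta^{2}uv\,d{\bm x}$ collects the remaining diagonal gradient term together with the full mass term $\int_{\omega_{i}^{\ast}}(\eta u)(\eta v)\,d{\bm x}$. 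The identity \cref{eq:3-5} is then equivalent to the cancellation $I_{1}+I_{2}+D=0$.

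The crucial step is to recognize that $\eta^{2}v$ and $\eta^{2}u$ are admissible test functions in $H^{1}_{0}(\omega^{\ast}_{i})$, so that the defining relation of the generalized harmonic space $H_{a,\varepsilon}(\omega^{\ast}_{i})$ in \cref{eq:3-1} can be applied. Indeed, on $\partial\omega^{\ast}_{i}\cap\Omega$ one has $\eta=0$ by hypothesis, while on $\partial\omega^{\ast}_{i}\cap\Gamma$ both $u$ and $v$ vanish because they belong to $H^{1}_{\Gamma}(\omega^{\ast}_{i})$; since $\eta\in W^{1,\infty}(\omega_{i}^{\ast})$ keeps the products in $H^{1}$, the functions $\eta^{2}v$ and $\eta^{2}u$ have zero trace on all of $\partial\omega^{\ast}_{i}$ and hence lie in $H^{1}_{0}(\omega^{\ast}_{i})$. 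Testing the harmonicity of $u$ against $\eta^{2}v$ and expanding $\nabla(\eta^{2}v)=\eta^{2}\nabla v+2\eta v\,\nabla\eta$ gives $2I_{2}+D=0$; testing the harmonicity of $v$ against $\eta^{2}u$ and using the symmetry of $A$ gives $2I_{1}+D=0$. Consequently $I_{1}=I_{2}=-D/2$, so $I_{1}+I_{2}+D=0$, which proves \cref{eq:3-5}.

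For \cref{eq:3-6} I would set $v=u$ in the identity just established to obtain $\Vert\eta u\Vert_{a,\varepsilon,\omega_{i}^{\ast}}^{2}=\varepsilon^{2}\int_{\omega_{i}^{\ast}}(A\nabla\eta\cdot\nabla\eta)\,u^{2}\,d{\bm x}$, then bound the integrand using $A\nabla\eta\cdot\nabla\eta\le a_{\rm max}|\nabla\eta|^{2}\le a_{\rm max}\Vert\nabla\eta\Vert_{L^{\infty}(\omega_{i}^{\ast})}^{2}$ and note that the integrand is supported in ${\rm supp}\,(\eta)$ (since $\nabla\eta=0$ wherever $\eta$ vanishes on an open set), so only the mass of $u$ over $\omega_{i}^{\ast}\cap{\rm supp}\,(\eta)$ enters; taking square roots yields the claimed estimate. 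The only genuinely nonroutine point in the whole argument is verifying the admissibility of the test functions $\eta^{2}u,\eta^{2}v\in H^{1}_{0}(\omega^{\ast}_{i})$ via the two-part boundary decomposition and then keeping careful track of the mixed terms; once the two test identities are in hand, the cancellation is purely algebraic.
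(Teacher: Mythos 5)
Your proposal is correct and takes essentially the same approach as the paper: both expand $a_{\varepsilon,\omega_{i}^{\ast}}(\eta u,\eta v)$ by the product rule and eliminate the remaining terms by testing the generalized harmonicity of $u$ and $v$ against the admissible test functions $\eta^{2}v,\,\eta^{2}u\in H_{0}^{1}(\omega_{i}^{\ast})$, using the symmetry of $A$. The only difference is bookkeeping: the paper symmetrizes its expansion by exchanging $u$ and $v$ and averaging, whereas you solve the two harmonicity identities $2I_{2}+D=0$ and $2I_{1}+D=0$ for the mixed terms, but the underlying cancellation is identical.
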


The proof of \cref{lem:3-1} is given in the appendix. Note that the Caccioppoli-type inequality \cref{eq:3-6} holds for $\eta=\chi_{i}$, where $\chi_{i}$ is the partition of unity function supported on $\omega_{i}$. Using this inequality and the Rellich theorem, we see that the operator 
\begin{equation}
P_{i}: \big( H_{a,\varepsilon}(\omega^{\ast}_{i}), \;\Vert \cdot\Vert_{a,\varepsilon,\omega_{i}^{\ast}}\big) \rightarrow \big(H_{0}^{1}(\omega_{i}),\;\Vert \cdot\Vert_{a,\varepsilon,\omega_{i}}\big) \quad {\rm with}\quad P_{i}v = \chi_{i}v
\end{equation}
is compact. To find the low-dimensional space in $H_{a,\varepsilon}(\omega^{\ast}_{i})$, following the idea in \cite{ma2022novel}, we consider the following Kolmogrov $n$-width of the operator $P_{i}$:
\begin{equation}\label{eq:3-10}
d_{n}(\omega_{i},\omega_{i}^{\ast})\mm{:}=\inf_{Q(n)\subset H_{0}^{1}(\omega_{i})}\sup_{u\in H_{a,\varepsilon}(\omega^{\ast}_{i})} \inf_{v\in Q(n)}\frac {\Vert P_{i}u-v\Vert_{a,\varepsilon,\omega_i}}{\Vert u \Vert_{a,\varepsilon,\omega_{i}^{\ast}}},
\end{equation}
where the leftmost infimum is taken over all $n$-dimensional subspaces of $ H_{0}^{1}(\omega_{i})$. The associated optimal approximation space $\widehat{Q}(n)$ satisfies
\begin{equation}\label{eq:3-11}
d_{n}(\omega_{i},\omega_{i}^{\ast}) =\sup_{u\in H_{a,\varepsilon}(\omega^{\ast}_{i})} \inf_{v\in \widehat{Q}(n)}\frac {\Vert P_{i}u-v\Vert_{a,\varepsilon,\omega_i}}{\Vert u \Vert_{a,\varepsilon,\omega_{i}^{\ast}}}.
\end{equation}
Since $P_{i}$ is a compact operator in Hilbert spaces, the associated Kolmogrov $n$-width can be characterized by its singular vectors and singular values; see, e.g., \cite[Theorem 2.5, Chapter 4]{pinkus1985n}. In particular, we have the following characterization of $d_{n}(\omega_{i},\omega_{i}^{\ast})$.
\begin{lemma}\label{lem:3-2}
Let $\{\lambda_{k}\}$ and $\{\phi_{k}\}$ denote the eigenvalues (listed to their multiplicities in non-increasing order) and eigenfunctions of the problem
\begin{equation}\label{eq:3-12}
{a}_{\varepsilon,\omega_{i}}(\chi_{i}\phi, \chi_{i} v) = \lambda\,{a}_{\varepsilon,\omega^{\ast}_{i}}(\phi, v)\quad \forall v\in H_{a,\varepsilon}(\omega^{\ast}_{i}).
\end{equation}
Then, $d_{n}(\omega_{i},\omega_{i}^{\ast}) = \lambda^{1/2}_{n+1}$, and the associated optimal approximation space is given by
\begin{equation}\label{eq:3-13}
\widehat{Q}(n) = {\rm span}\{\chi_{i}\phi_{1},\cdots, \chi_{i}\phi_{n}\}.
\end{equation}
\end{lemma}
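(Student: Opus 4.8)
The plan is to reduce the statement to the classical characterization of Kolmogorov $n$-widths of compact operators between Hilbert spaces in terms of their singular values, and then to identify those singular values and singular vectors with the spectral data of the generalized eigenproblem \cref{eq:3-12}. Since the excerpt has already established that $P_i$ is compact (via the Caccioppoli-type inequality \cref{eq:3-6} together with Rellich's theorem), the image $\{P_i u : \|u\|_{a,\varepsilon,\omega_i^\ast}\le 1\}$ is precompact in $(H_0^1(\omega_i),\|\cdot\|_{a,\varepsilon,\omega_i})$, so the quantity $d_n(\omega_i,\omega_i^\ast)$ defined in \cref{eq:3-10} is exactly the Kolmogorov $n$-width of this image. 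All admissible trial subspaces $Q(n)$ live in $H_0^1(\omega_i)$, and the candidate space $\widehat Q(n)$ in \cref{eq:3-13} is admissible because $\chi_i\phi_k = P_i\phi_k \in H_0^1(\omega_i)$.

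First I would introduce the Hilbert-space adjoint $P_i^\ast:(H_0^1(\omega_i),\|\cdot\|_{a,\varepsilon,\omega_i}) \to (H_{a,\varepsilon}(\omega_i^\ast),\|\cdot\|_{a,\varepsilon,\omega_i^\ast})$, characterized with respect to the two (distinct) energy inner products by
\[
a_{\varepsilon,\omega_i}(\chi_i u, w) = a_{\varepsilon,\omega_i^\ast}(u, P_i^\ast w) \qquad \forall\, u\in H_{a,\varepsilon}(\omega_i^\ast),\ w\in H_0^1(\omega_i).
\]
Compactness of $P_i$ transfers to the self-adjoint, positive, compact operator $T_i := P_i^\ast P_i$ on $H_{a,\varepsilon}(\omega_i^\ast)$. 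Testing the eigenrelation $T_i\phi = \lambda\phi$ against an arbitrary $v\in H_{a,\varepsilon}(\omega_i^\ast)$ in the $a_{\varepsilon,\omega_i^\ast}$-inner product and unfolding the definition of $P_i^\ast$ yields
\[
a_{\varepsilon,\omega_i}(\chi_i\phi,\chi_i v) = a_{\varepsilon,\omega_i^\ast}(P_i^\ast P_i\phi, v) = \lambda\, a_{\varepsilon,\omega_i^\ast}(\phi, v),
\]
which is precisely \cref{eq:3-12}. Hence the eigenpairs $(\lambda_k,\phi_k)$ of \cref{eq:3-12} coincide with those of $T_i$; the spectral theorem for compact self-adjoint operators supplies a complete $a_{\varepsilon,\omega_i^\ast}$-orthonormal system of eigenfunctions with $\lambda_1\ge\lambda_2\ge\cdots\to 0$, and the singular values of $P_i$ are $\sigma_k=\lambda_k^{1/2}$, with right singular vectors $\phi_k$ and corresponding left singular vectors $\sigma_k^{-1}P_i\phi_k=\sigma_k^{-1}\chi_i\phi_k$.

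Finally I would invoke the abstract $n$-width result \cite[Theorem 2.5, Chapter 4]{pinkus1985n}: for a compact operator between Hilbert spaces, the Kolmogorov $n$-width of the image of the unit ball equals the $(n+1)$-st singular value, and an optimal approximating subspace is spanned by the first $n$ left singular vectors. This gives $d_n(\omega_i,\omega_i^\ast)=\sigma_{n+1}=\lambda_{n+1}^{1/2}$ and the optimal space $\widehat Q(n)={\rm span}\{\chi_i\phi_1,\dots,\chi_i\phi_n\}$, establishing \cref{eq:3-13}.

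The step requiring the most care is the correct identification of $T_i=P_i^\ast P_i$ with the left-hand bilinear form of \cref{eq:3-12}: one must take the adjoint with respect to the two \emph{different} energy inner products (on $\omega_i^\ast$ and on $\omega_i$) and verify that $T_i$ is well defined, self-adjoint, and compact \emph{on} $H_{a,\varepsilon}(\omega_i^\ast)$, rather than on $H_0^1(\omega_i)$. The remaining arguments are a direct transcription of the singular-value theory, the only bookkeeping being that the eigenvalues are listed in non-increasing order so that $\lambda_{n+1}^{1/2}$ is genuinely the $n$-width.
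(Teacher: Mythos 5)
Your proposal is correct and follows essentially the same route as the paper's own proof: both introduce the adjoint $P_i^\ast$ with respect to the two energy inner products, identify \cref{eq:3-12} as the variational formulation of the eigenvalue problem for the compact self-adjoint operator $P_i^\ast P_i$ on $H_{a,\varepsilon}(\omega_i^\ast)$, and conclude via the singular-value characterization of $n$-widths in \cite[Theorem 2.5, Chapter 4]{pinkus1985n}. Your write-up merely makes explicit the unfolding of the adjoint identity and the singular-vector bookkeeping that the paper leaves implicit.
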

\begin{proof}
Let $P_{i}^{\ast}:H_{0}^{1}(\omega_{i})\rightarrow H_{a,\varepsilon}(\omega^{\ast}_{i})$ denote the adjoint of the operator $P_{i}$ in the $a_{\varepsilon,\omega_{i}^{\ast}}(\cdot,\cdot)$ inner-product, and consider the eigenvalue problem of the operator $P_{i}^{\ast}P_{i}$:
\begin{equation}\label{eq:3-14}
    P_{i}^{\ast}P_{i}\phi = \lambda \,\phi.
\end{equation}
Note that the problem \cref{eq:3-12} is the variational formulation of problem \cref{eq:3-14}. Hence, the result follows from \cite[Theorem 2.5, Chapter 4]{pinkus1985n}.
\end{proof}
Now we are ready to define the local approximation spaces for the MS-GFEM.
\begin{definition}[\textbf{Local approximation spaces}]\label{def:3-2}
The local approximation space on $\omega_{i}$ is defined as
\begin{equation}
S_{n_{i}}(\omega_i) =  {\rm span}\big\{{\phi_{1}}|_{\omega_i},\cdots,{\phi_{n_{i}}}|_{\omega_i}\big\},
\end{equation}
where $\phi_{k}$ denotes the $k$-th eigenfunction of \cref{eq:3-12}.
\end{definition}
Combining the definition of the $n$-width and the fact that $u_{\varepsilon}|_{\omega_{i}^{\ast}}-\psi_{\varepsilon,i}\in H_{a,\varepsilon}(\omega^{\ast}_{i})$ gives the following local approximation error estimate.
\begin{theorem}\label{thm:3-1}
For each $i=1,\cdots,M$, let the local particular function $u_{i}^{p}$ and the local approximation space $S_{n_{i}}(\omega_i)$ be defined in \cref{def:3-1,def:3-2}. Then,
\begin{equation}\label{eq:3-15}
\inf_{\varphi_{i} \in u_{i}^{p}+S_{n_{i}}(\omega_i)} \big\Vert \chi_{i}(u_{\varepsilon}-\varphi_{i})\big\Vert_{a,\varepsilon,\omega_{i}}\leq d_{n_{i}}(\omega_{i},\omega_{i}^{\ast})\,\big\Vert u_{\varepsilon}|_{\omega_{i}^{\ast}}-\psi_{\varepsilon,i}\big\Vert_{a, \varepsilon,\omega_{i}^{\ast}},
\end{equation}
where $u_{\varepsilon}$ is the exact solution of \cref{eq:2-4}.
\end{theorem}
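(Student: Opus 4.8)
The plan is to reduce the left-hand infimum in \cref{eq:3-15} to the Kolmogorov $n$-width problem already solved in \cref{lem:3-2}, and then to invoke the optimality relation \cref{eq:3-11}. The linchpin is the observation recorded just before the statement: subtracting \cref{eq:3-4} from \cref{eq:2-4} with test functions in $H^{1}_{0}(\omega_{i}^{\ast})$ shows that the residual
\[
w_{i} := u_{\varepsilon}|_{\omega_{i}^{\ast}} - \psi_{\varepsilon,i}
\]
lies in the generalized harmonic space $H_{a,\varepsilon}(\omega_{i}^{\ast})$.

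First I would parametrize the competitors. Every $\varphi_{i}\in u_{i}^{p}+S_{n_{i}}(\omega_{i})$ has the form $\varphi_{i}=u_{i}^{p}+\sum_{k=1}^{n_{i}}c_{k}\,\phi_{k}|_{\omega_{i}}$ with $c_{k}\in\mathbb{R}$, and since $u_{i}^{p}=\psi_{\varepsilon,i}|_{\omega_{i}}$ by \cref{def:3-1}, on $\omega_{i}$ one has $u_{\varepsilon}-\varphi_{i}=w_{i}-\sum_{k=1}^{n_{i}}c_{k}\phi_{k}$. Multiplying by $\chi_{i}$ and recalling $P_{i}v=\chi_{i}v$ gives
\[
\chi_{i}(u_{\varepsilon}-\varphi_{i}) = P_{i}w_{i}-\sum_{k=1}^{n_{i}}c_{k}\,\chi_{i}\phi_{k}.
\]

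Next I would match the two feasible sets. As $(c_{1},\dots,c_{n_{i}})$ ranges over $\mathbb{R}^{n_{i}}$, the functions $\sum_{k}c_{k}\chi_{i}\phi_{k}$ range over exactly $\widehat{Q}(n_{i})={\rm span}\{\chi_{i}\phi_{1},\dots,\chi_{i}\phi_{n_{i}}\}$ of \cref{eq:3-13}, so that
\[
\inf_{\varphi_{i}\in u_{i}^{p}+S_{n_{i}}(\omega_{i})}\big\Vert \chi_{i}(u_{\varepsilon}-\varphi_{i})\big\Vert_{a,\varepsilon,\omega_{i}} = \inf_{v\in\widehat{Q}(n_{i})}\big\Vert P_{i}w_{i}-v\big\Vert_{a,\varepsilon,\omega_{i}}.
\]
Since $w_{i}\in H_{a,\varepsilon}(\omega_{i}^{\ast})$, the optimality characterization \cref{eq:3-11} bounds the right-hand side by $d_{n_{i}}(\omega_{i},\omega_{i}^{\ast})\,\Vert w_{i}\Vert_{a,\varepsilon,\omega_{i}^{\ast}}$, which is exactly \cref{eq:3-15}.

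I do not expect a genuine obstacle here: the analytic substance lives in \cref{lem:3-2}, which identifies the optimal $n$-width space with the spectral basis of \cref{eq:3-12}, and in the harmonic membership of the residual $w_{i}$. The only point demanding mild care is the bookkeeping between restriction to $\omega_{i}$ and the action of $P_{i}$, namely verifying that $\{\chi_{i}\phi_{k}\}_{k=1}^{n_{i}}$ spans precisely $\widehat{Q}(n_{i})$, so that the two infima above are literally equal rather than merely comparable; once this is in place, \cref{thm:3-1} is a direct corollary of the $n$-width machinery.
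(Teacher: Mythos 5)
Your proposal is correct and follows exactly the paper's own route: the paper proves \cref{thm:3-1} in one line by combining the membership $u_{\varepsilon}|_{\omega_{i}^{\ast}}-\psi_{\varepsilon,i}\in H_{a,\varepsilon}(\omega_{i}^{\ast})$ with the definition of the $n$-width and the optimality of $\widehat{Q}(n_{i})$ from \cref{lem:3-2}. Your write-up merely makes explicit the bookkeeping (parametrizing competitors and identifying $\{\chi_{i}\phi_{k}\}$ with $\widehat{Q}(n_{i})$) that the paper leaves implicit, which is a faithful expansion of the same argument.
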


\Cref{thm:3-1} shows that the local approximation error on $\omega_{i}$ is essentially bounded by the $n$-width $d_{n_{i}}(\omega_{i},\omega_{i}^{\ast})$. Assuming that $\delta^{\ast}_{i}:={\rm  dist}(\omega_{i},\,\partial \omega_{i}^{\ast}\setminus\partial \Omega)>0$, we can prove an $\varepsilon$-explicit and root exponential upper bound on $d_{n_{i}}(\omega_{i},\omega_{i}^{\ast})$ as follows. 
\begin{theorem}\label{thm:3-2}
There exist $\Lambda_{i}>0$ and $b_{i}>0$ independent of $\varepsilon$, such that for any $\varepsilon\in (0,1]$, 
\begin{equation}\label{eq:3-16-0}
d_{n_{i}}(\omega_{i},\omega_{i}^{\ast}) \leq \varepsilon a_{\rm max}^{1/2} \Vert \nabla\chi_{i}\Vert_{L^{\infty}(\omega_{i})}\,e^{-b_{i}n_{i}^{{1}/{(d+1)}}},\quad \forall \,n_{i}>\Lambda_{i}.
\end{equation}
\end{theorem}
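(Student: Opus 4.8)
The plan is to use the Caccioppoli inequality of \cref{lem:3-1} to peel off the explicit factor $\varepsilon\,a_{\rm max}^{1/2}\|\nabla\chi_i\|_{L^{\infty}(\omega_i)}$, thereby reducing the claim to a root-exponential bound for the Kolmogorov $n$-width $\widetilde{d}_{n}$ of the compact embedding $\iota:(H_{a,\varepsilon}(\omega_i^{\ast}),\|\cdot\|_{a,\varepsilon,\omega_i^{\ast}})\hookrightarrow(L^{2}(\omega_i),\|\cdot\|_{L^{2}(\omega_i)})$. Let $\Xi_{n}\subset H_{a,\varepsilon}(\omega_i^{\ast})$ be the span of the first $n$ right singular vectors of $\iota$; then $\inf_{\xi\in\Xi_{n}}\|u-\xi\|_{L^{2}(\omega_i)}\le\widetilde{d}_{n}\|u\|_{a,\varepsilon,\omega_i^{\ast}}$ for every $u\in H_{a,\varepsilon}(\omega_i^{\ast})$. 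Since $u-\xi\in H_{a,\varepsilon}(\omega_i^{\ast})$, applying \cref{eq:3-6} with $\eta=\chi_i$ (which vanishes on $\partial\omega_i^{\ast}\cap\Omega$ because ${\rm supp}\,\chi_i\subset\overline{\omega_i}$ and $\delta_i^{\ast}>0$) gives
\[\big\|\chi_i(u-\xi)\big\|_{a,\varepsilon,\omega_i}\le\varepsilon\,a_{\rm max}^{1/2}\,\|\nabla\chi_i\|_{L^{\infty}(\omega_i)}\,\|u-\xi\|_{L^{2}(\omega_i)}.\]
Using $Q(n)=\chi_i\Xi_{n}$ as a trial space in \cref{eq:3-10} then yields $d_{n}(\omega_i,\omega_i^{\ast})\le\varepsilon\,a_{\rm max}^{1/2}\,\|\nabla\chi_i\|_{L^{\infty}(\omega_i)}\,\widetilde{d}_{n}$, so that the entire $\varepsilon$-dependence is isolated in the prefactor and it remains to prove $\widetilde{d}_{n}\le e^{-b_i n^{1/(d+1)}}$ for $n>\Lambda_i$ with $b_i,\Lambda_i$ independent of $\varepsilon$.

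The engine for bounding $\widetilde{d}_{n}$ is a single-step approximation result for $a_\varepsilon$-harmonic functions on nested subdomains, which I would obtain by combining an $\varepsilon$-free gradient bound with a local first-order approximation. For open $D\subset D^{\ast}\subset\omega_i^{\ast}$ with $\rho:={\rm dist}(D,\partial D^{\ast}\setminus\partial\Omega)>0$, the Caccioppoli identity of \cref{lem:3-1} (which holds verbatim on any such subdomain, for the analogously defined space $H_{a,\varepsilon}(D^{\ast})$) applied with the cut-off $\eta$ from \cref{eq:3-0}, together with $\|\eta u\|_{a,\varepsilon,D^{\ast}}^{2}\ge\varepsilon^{2}a_{\rm min}\|\nabla u\|_{L^{2}(D)}^{2}$, gives the $\varepsilon$-independent bound
\[\|\nabla u\|_{L^{2}(D)}\le (a_{\rm max}/a_{\rm min})^{1/2}(C_d/\rho)\,\|u\|_{L^{2}(D^{\ast})}\qquad\forall u\in H_{a,\varepsilon}(D^{\ast}).\]
Covering $D$ by cells of size $H$ and approximating $u$ cellwise to first order, I get a space $W$ with ${\rm dim}\,W\lesssim({\rm diam}(D)/H)^{d}$ and an approximant $\tilde u\in W$, \emph{chosen $a_\varepsilon$-harmonic}, with $\|u-\tilde u\|_{L^{2}(D)}\le C(H/\rho)\|u\|_{L^{2}(D^{\ast})}$, where $C$ depends only on $d$ and $a_{\rm max}/a_{\rm min}$.

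With this building block the root-exponential bound follows by telescoping over nested shells. Fix $m\in\mathbb{N}$, set $\rho:=\delta_i^{\ast}/m$, and take $\omega_i=G_0\subset G_1\subset\cdots\subset G_m=\omega_i^{\ast}$ with ${\rm dist}(G_{j-1},\partial G_j\setminus\partial\Omega)\ge\rho$. Put $u_0:=u$ and, for $j=1,\dots,m$, let $\tilde u_{j-1}$ be the single-step harmonic approximant of $u_{j-1}$ on $G_{m-j}$ (with $D=G_{m-j}$, $D^{\ast}=G_{m-j+1}$) and $H$ chosen so that $CH/\rho=\theta<1$; the residual $u_j:=u_{j-1}-\tilde u_{j-1}$ is again $a_\varepsilon$-harmonic on $G_{m-j}$ and satisfies $\|u_j\|_{L^{2}(G_{m-j})}\le\theta\|u_{j-1}\|_{L^{2}(G_{m-j+1})}$. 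The inner norms telescope, so with $V:={\rm span}\{\tilde u_0,\dots,\tilde u_{m-1}\}$,
\[\inf_{v\in V}\|u-v\|_{L^{2}(\omega_i)}\le\|u_m\|_{L^{2}(\omega_i)}\le\theta^{m}\|u\|_{L^{2}(\omega_i^{\ast})}\le\theta^{m}\|u\|_{a,\varepsilon,\omega_i^{\ast}},\]
while ${\rm dim}\,V\lesssim m\,({\rm diam}(\omega_i^{\ast})/H)^{d}\sim c_i\,m^{d+1}$. Hence $\widetilde{d}_{n}\le\theta^{m}$ with $n\sim c_i m^{d+1}$, and eliminating $m\sim(n/c_i)^{1/(d+1)}$ converts $\theta^{m}$ into $e^{-b_i n^{1/(d+1)}}$.

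The main obstacle will be the single-step lemma in the precise form used above: that the first-order local approximant can be taken itself $a_\varepsilon$-harmonic, so that the residuals remain in the generalized harmonic space and the telescoping closes, while retaining the \emph{volumetric} count ${\rm dim}\,W\sim({\rm diam}(D)/H)^{d}$ that produces the exponent $1/(d+1)$ rather than a boundary count $1/d$. Making this construction rigorous is the technical heart of the argument and follows the constructions in \cite{babuska2011optimal,ma2022novel,ma2021error}, here adapted to the $\varepsilon$-weighted energy norm. The only $\varepsilon$-sensitive estimate is the Caccioppoli gradient bound, in which the factor $\varepsilon$ cancels; consequently $\theta$, $c_i$, and all rate constants — and hence $b_i$ and $\Lambda_i$ — are independent of $\varepsilon$, and the sole surviving $\varepsilon$ is the prefactor produced in the first step.
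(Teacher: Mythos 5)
Your proposal is correct and follows essentially the same route as the paper's own proof: the $\varepsilon$-cancelling Caccioppoli inequality of \cref{lem:3-1}, a single-step volumetric approximation of generalized harmonic functions by a finite-dimensional subspace \emph{of} $H_{a,\varepsilon}(D^{\ast})$ (the paper's \cref{lem:3-3-0,lem:3-3,lem:3-5}), and telescoping over nested shells with $n\sim m^{d+1}$ total degrees of freedom so that the per-step contraction $\theta$ becomes $e^{-b_i n^{1/(d+1)}}$ (the paper's \cref{lem:3-6} and the proof of \cref{thm:3-2}). The one step you defer as the ``technical heart'' --- that the local approximant can be taken inside the generalized harmonic space while keeping the volumetric dimension count --- is precisely what the paper's \cref{lem:3-3} supplies, via the same restriction-operator singular-vector argument you already use in your opening reduction of $d_{n}(\omega_i,\omega_i^{\ast})$ to the $L^{2}$-width $\widetilde{d}_{n}$.
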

The proof of \cref{thm:3-2} is given in the next subsection where the constants $\Lambda_{i}$ and $b_{i}$ are given explicitly.

\subsection{Local approximation error estimates}\label{sec:3-2}
This subsection is devoted to proving \cref{thm:3-0,thm:3-2}. For ease of notation, the subscript $i$ is omitted in the proof. \mm{Let us recall the constant $C_{d}>0$ given by \cref{eq:3-0}} and start with a sharper Caccioppoli-type inequality as follows.

\begin{lemma}\label{lem:3-4}
Let ${D}\subset{D}^{\ast}$ be open connected subsets of $\Omega$ with $\delta={\rm dist}({D},\,\partial {D}^{\ast}\setminus\partial \Omega)>0$, and let $\eta\in W^{1,\infty}(D^{\ast})$ with ${\rm supp}(\eta)\subset \overline{D}$. If $\delta/(e\varepsilon a_{\rm max}^{1/2}\mm{C_{d}})>1$, then for any $u\in H_{a,\varepsilon}(D^{\ast})$,
\begin{align}
 \Vert u \Vert_{L^{2}({D})} &\leq e^{1-\delta/(e\varepsilon a_{\rm max}^{1/2}\mm{C_{d}})}\;\Vert u \Vert_{L^{2}({D}^{\ast})},\label{eq:3-26}\\[1mm]
\Vert \eta u \Vert_{a,\varepsilon,{D}^{\ast}}&\leq \varepsilon a_{\rm max}^{1/2} e^{1-\delta/(e\varepsilon a_{\rm max}^{1/2}\mm{C_{d}})} \Vert \nabla \eta\Vert_{L^{\infty}(D^{\ast})} \Vert u \Vert_{L^{2}({D}^{\ast})}.\label{eq:3-26-0}
\end{align}
\end{lemma}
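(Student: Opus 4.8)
The plan is to bootstrap the basic Caccioppoli inequality of \cref{lem:3-1} across a family of nested shells separating $D$ from $D^{\ast}$: each shell costs a fixed multiplicative factor in the $L^{2}$-norm, and optimizing the number of shells converts this into the claimed exponential factor. The second estimate \cref{eq:3-26-0} will then come essentially for free from \cref{eq:3-26} together with one more application of \cref{eq:3-6}.

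First I would construct a chain of open connected sets $D = D_{0}\subset D_{1}\subset\cdots\subset D_{m}\subseteq D^{\ast}$ for an integer $m$ to be chosen, with consecutive sets separated by a gap of width $\delta/m$. Concretely, $D_{j}:=\{x\in D^{\ast}:{\rm dist}(x,D)<j\delta/m\}$ for $1\leq j\leq m$ does the job: since every point of $\partial D^{\ast}\setminus\partial\Omega$ lies at distance $\geq\delta$ from $D$, one checks that ${\rm dist}(D_{j-1},\partial D_{j}\setminus\partial\Omega)\geq\delta/m$, while $D_{m}\subseteq D^{\ast}$ holds by construction. Applying the cutoff construction \cref{eq:3-0} to each pair $(D_{j-1},D_{j})$ produces $\eta_{j}$ with $\eta_{j}=1$ on $D_{j-1}$, ${\rm supp}(\eta_{j})\subseteq\overline{D_{j}}$, $\eta_{j}=0$ on $\partial D^{\ast}\cap\Omega$, and $\|\nabla\eta_{j}\|_{L^{\infty}}\leq C_{d}m/\delta$.

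Next I would derive the one-step estimate. Since $\eta_{j}$ vanishes on $\partial D^{\ast}\cap\Omega$ and equals $1$ on $D_{j-1}$, combining $\|u\|_{L^{2}(D_{j-1})}\leq\|\eta_{j}u\|_{L^{2}(D^{\ast})}\leq\|\eta_{j}u\|_{a,\varepsilon,D^{\ast}}$ with the Caccioppoli bound \cref{eq:3-6} and ${\rm supp}(\eta_{j})\subseteq\overline{D_{j}}$ gives
\[
\|u\|_{L^{2}(D_{j-1})}\leq \varepsilon a_{\rm max}^{1/2}\,\frac{C_{d}m}{\delta}\,\|u\|_{L^{2}(D_{j})}=\frac{m}{\gamma}\,\|u\|_{L^{2}(D_{j})},\qquad \gamma:=\frac{\delta}{\varepsilon a_{\rm max}^{1/2}C_{d}}.
\]
Iterating from $j=m$ down to $j=1$ and using $D_{m}\subseteq D^{\ast}$ yields $\|u\|_{L^{2}(D)}\leq(m/\gamma)^{m}\|u\|_{L^{2}(D^{\ast})}$. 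I would then optimize over the integer $m$: the continuous minimizer of $(m/\gamma)^{m}$ is $m=\gamma/e$, and the hypothesis $\gamma/e>1$ guarantees that $m:=\lfloor\gamma/e\rfloor\geq 1$, whence $\gamma/e-1<m\leq\gamma/e$ gives $(m/\gamma)^{m}\leq e^{-m}<e^{1-\gamma/e}$, which is exactly \cref{eq:3-26}. Finally, \cref{eq:3-26-0} follows by one further application of \cref{eq:3-6} to the given $\eta$: since ${\rm supp}(\eta)\subset\overline{D}$ we get $\|\eta u\|_{a,\varepsilon,D^{\ast}}\leq\varepsilon a_{\rm max}^{1/2}\|\nabla\eta\|_{L^{\infty}(D^{\ast})}\|u\|_{L^{2}(D)}$, and substituting \cref{eq:3-26} closes the argument.

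The main obstacle is twofold. On the geometric side, one must verify carefully that the distance-based shells $D_{j}$ are genuinely nested, connected, and contained in $D^{\ast}$ with the stated separation $\delta/m$, and that $\eta_{j}$ indeed vanishes on all of $\partial D^{\ast}\cap\Omega$ (a point that needs attention near $\partial\Omega$ and for the outermost shell $j=m$). On the analytic side, the delicate step is the \emph{discrete} optimization over $m$: a crude choice recovers only a qualitative exponential rate, and it is the two-sided control $\gamma/e-1<\lfloor\gamma/e\rfloor\leq\gamma/e$ that pins down the sharp constant $e^{1-\delta/(e\varepsilon a_{\rm max}^{1/2}C_{d})}$ appearing in the statement.
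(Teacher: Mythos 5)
Your proposal is correct and follows essentially the same route as the paper's proof: the same nested-shell iteration of the Caccioppoli inequality \cref{eq:3-6} with one-step factor $N\varepsilon a_{\rm max}^{1/2}C_{d}/\delta$, the same choice $N=\lfloor \delta/(e\varepsilon a_{\rm max}^{1/2}C_{d})\rfloor$ with the two-sided floor estimate yielding $e^{1-\delta/(e\varepsilon a_{\rm max}^{1/2}C_{d})}$, and the same final application of \cref{eq:3-6} with ${\rm supp}(\eta)\subset\overline{D}$ to deduce \cref{eq:3-26-0} from \cref{eq:3-26}. The only cosmetic difference is that you build the shells explicitly via the distance function and apply the Caccioppoli inequality on $D^{\ast}$ with cutoffs supported in the shells, whereas the paper restricts $u$ to each intermediate domain $D_{k}$; both are valid.
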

\begin{proof}
Let $N\in \mathbb{N}$ and choose $\{{D}_{k}\}_{k=0}^{N}$ such that ${D} = {D}_{0}\subset{D}_{1}\subset\cdots\subset{D}_{N} = {D}^{\ast}$ and ${\rm dist}({D}_{k-1}, \partial {D}_{k}\setminus\partial \Omega) = \delta/N$ for each $1\leq k\leq N$. Next we choose a cut-off function $\widetilde{\eta}\in C^{1}(\overline{{D}_{1}})$ such that 
\begin{equation}\label{eq:3-26-1}
\widetilde{\eta} = 0 \;\;{\rm on}\;\; \partial {D}_{1}\setminus\partial \Omega; \quad \widetilde{\eta} = 1 \;\;{\rm in} \;\;{D}_{0}; \quad |\nabla \widetilde{\eta}| \leq \mm{C_{d}}N/\delta.
\end{equation}
Applying \cref{eq:3-6} to $u\in H_{a,\varepsilon}(D_{1})$ and $\widetilde{\eta}$ satisfying \cref{eq:3-26-1}, we get
\begin{equation}\label{eq:3-27}
\Vert u \Vert_{L^{2}({D}_{0})} \leq \Vert \widetilde{\eta} u \Vert_{a,\varepsilon,{D}_{1}}\leq \big(N\varepsilon a_{\rm max}^{1/2}\mm{C_{d}}/\delta\big) \,\Vert u \Vert_{L^{2}({D}_{1})}.
\end{equation}
Using the above argument recursively on ${D}_{1},\cdots,{D}_{N}$, it follows that
\begin{equation}\label{eq:3-28}
\Vert u \Vert_{L^{2}({D})}=\Vert u \Vert_{L^{2}({D}_{0})} \leq (N\varepsilon a_{\rm max}^{1/2}\mm{C_{d}}/\delta)^{N} \,\Vert u \Vert_{L^{2}({D}^{\ast})}.
\end{equation}
Let $N = \lfloor \delta/ (e\varepsilon a_{\rm max}^{1/2}\mm{C_{d}})\rfloor$. Then we have $N\varepsilon a_{\rm max}^{1/2}\mm{C_{d}}/\delta\leq e^{-1}$ and $N\geq \delta/ (e\varepsilon a_{\rm max}^{1/2}\mm{C_{d}})-1$. It follows from \cref{eq:3-28} that
\begin{equation}\label{eq:3-29}
\Vert u \Vert_{L^{2}({D})}\leq e^{-N} \;\Vert u \Vert_{L^{2}({D}^{\ast})} \leq e^{1-\delta/ (e\varepsilon a_{\rm max}^{1/2}\mm{C_{d}})}\,\Vert u \Vert_{L^{2}({D}^{\ast})},
\end{equation}
which gives \cref{eq:3-26}. To prove \cref{eq:3-26-0}, we first use \cref{eq:3-6} and the assumption that ${\rm supp}(\eta)\subset \overline{D}$ to get 
\begin{equation}\label{eq:3-30}
\Vert \eta u \Vert_{a,\varepsilon,{D}^{\ast}}\leq \varepsilon a_{\rm max}^{1/2}\Vert \nabla \eta\Vert_{L^{\infty}(D^{\ast})} \Vert u\Vert_{L^{2}({D})}.
\end{equation}
Combine \cref{eq:3-30} with \cref{eq:3-26} and the estimate \cref{eq:3-26-0} follows.
\end{proof}

Now we can prove \cref{thm:3-0}.
\begin{proof}[Proof of \cref{thm:3-0}] 
Since $u_{\varepsilon}|_{\omega_{i}^{\ast}}-\psi_{\varepsilon,i}\in H_{a,\varepsilon}(\omega^{\ast}_{i})$ and $\chi_{i} \in W^{1,\infty}(\omega^{\ast}_{i})$ with $\chi_{i}=0$ on $\partial \omega_{i}\cap \Omega$, we can apply the Caccioppoli-type inequality \cref{eq:3-6} to get
\begin{equation}\label{eq:3-30-1}
\Vert \chi_{i}(u_{\varepsilon}-\psi_{\varepsilon,i})\Vert_{a,\varepsilon, \omega_{i}} \leq \varepsilon a_{\rm max}^{1/2} \Vert \nabla \chi_{i} \Vert_{L^{\infty}(\omega_{i})} \Vert u_{\varepsilon}-\psi_{\varepsilon,i} \Vert_{L^{2}(\omega_{i})}.
\end{equation}
Noting that $\Vert\psi_{\varepsilon,i}\Vert_{L^{2}(\omega_{i})}\leq \Vert\psi_{\varepsilon,i}\Vert_{L^{2}(\omega_{i}^{\ast})}\leq \Vert f\Vert_{L^{2}(\omega_{i}^{\ast})}$, \cref{eq:3-9-1} follows immediately from \cref{eq:3-30-1}. The second part of \cref{thm:3-0} can be proved in a similar way by applying \cref{eq:3-26-0} on $\omega_{i}^{\ast}$.
\end{proof}

In the rest of this subsection, we prove \cref{thm:3-2} based on an explicit construction of a subspace of $H_{0}^{1}(\omega)$ with approximation error decaying nearly exponentially. To this end, we first give some useful lemmas.

\begin{lemma}\label{lem:3-3-0}
Let ${D}$ and ${D}^{\ast}$ be open connected subsets of $\Omega$ with ${D}\subset{D}^{\ast}$ and $\delta = {\rm dist}({D},\,\partial {D}^{\ast}\setminus\partial \Omega)>0$. There exist positive constants $C_{1}$ and $C_{2}$ depending only on $d$, such that for each integer $m\geq C_{1}|D^{\ast}|\delta^{-d}$, there exists an $m$-dimensional space $V_{m}({D}^{\ast})\subset L^{2}(D^{\ast})$ satisfying
\begin{equation}\label{eq:3-16-1}
\inf_{v\in V_{m}({D}^{\ast})} \Vert u-v\Vert_{L^{2}({D})}\leq C_{2}m^{-1/d} |D^{\ast}|^{1/d} \Vert \nabla u \Vert_{L^{2}({D}^{\ast})}\quad \forall u\in H^{1}(D^{\ast}).
\end{equation}
\end{lemma}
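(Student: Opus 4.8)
The plan is to take $V_{m}(D^{\ast})$ to be a space of piecewise constants on a uniform grid of small cubes and to derive \cref{eq:3-16-1} by assembling local Poincar\'e--Wirtinger inequalities, the cube size being dictated by the budget $m$ and the buffer $\delta$.

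First I would fix the length scale $H := c_{d}\,(|D^{\ast}|/m)^{1/d}$ for a dimensional constant $c_{d}$ to be chosen. The hypothesis $m\geq C_{1}|D^{\ast}|\delta^{-d}$ is precisely what guarantees $\sqrt{d}\,H\leq \delta$, upon setting $C_{1}:=(\sqrt{d}\,c_{d})^{d}$; this smallness of $H$ relative to $\delta$ is what allows the construction to live inside $D^{\ast}$. Tile $\mathbb{R}^{d}$ by the closed cubes $\{Q_{\alpha}\}$ of side $H$ induced by the grid $H\mathbb{Z}^{d}$, and let $\mathcal{I}=\{\alpha:Q_{\alpha}\cap D\neq\emptyset\}$. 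Every such cube lies within distance $\sqrt{d}\,H\leq\delta$ of $D$, so by the definition $\delta={\rm dist}(D,\partial D^{\ast}\setminus\partial\Omega)$ it cannot meet the interior boundary $\partial D^{\ast}\setminus\partial\Omega$; hence its intersection with $\Omega$ is contained in $D^{\ast}$. A packing estimate then bounds $\#\mathcal{I}$ by a $d$-dependent multiple of $|D^{\ast}|H^{-d}$, and the choice of $H$ makes this at most $m$. I then set $V_{m}(D^{\ast}):={\rm span}\{\mathbf{1}_{Q_{\alpha}\cap D^{\ast}}:\alpha\in\mathcal{I}\}$, a space of dimension at most $m$ (which may be enlarged to dimension exactly $m$).

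To estimate the error, given $u\in H^{1}(D^{\ast})$ I would define $v\in V_{m}(D^{\ast})$ by letting $v$ equal, on each $Q_{\alpha}\cap D^{\ast}$, the average $\bar u_{\alpha}$ of $u$ over that set. The scaled Poincar\'e--Wirtinger inequality on a cube, $\Vert u-\bar u\Vert_{L^{2}(Q)}\leq C_{d}\,{\rm diam}(Q)\,\Vert\nabla u\Vert_{L^{2}(Q)}$, yields on each cube $\Vert u-\bar u_{\alpha}\Vert_{L^{2}(Q_{\alpha}\cap D)}\leq C_{d}\,H\,\Vert\nabla u\Vert_{L^{2}(Q_{\alpha}\cap D^{\ast})}$. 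Squaring, summing over $\alpha\in\mathcal{I}$, and using that the cubes have pairwise disjoint interiors (so that the right-hand sides are bounded by $\Vert\nabla u\Vert_{L^{2}(D^{\ast})}^{2}$) gives $\Vert u-v\Vert_{L^{2}(D)}\leq C_{d}\,H\,\Vert\nabla u\Vert_{L^{2}(D^{\ast})}$; inserting $H=c_{d}(|D^{\ast}|/m)^{1/d}$ produces \cref{eq:3-16-1} with $C_{2}$ depending only on $d$.

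The main obstacle is to make the local Poincar\'e constant uniform --- depending only on $d$ --- on the cubes that meet $\partial\Omega$, for which $Q_{\alpha}\cap D^{\ast}$ need not be a full cube nor even convex. For cubes near the interior boundary the buffer already does the job, since these are genuine cubes sitting inside $D^{\ast}$ where the $d$-only constant applies; this is exactly why $\delta$ is measured only to $\partial D^{\ast}\setminus\partial\Omega$. For the cubes straddling $\partial\Omega$ one must instead control the Poincar\'e constant on $Q_{\alpha}\cap D^{\ast}$, which I would handle by extending $u$ across the Lipschitz boundary so that both the average and the Poincar\'e estimate can be taken on the full cube, and then tracing the constant back to one depending only on $d$ and the scale-invariant Lipschitz character. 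The only other delicate point is bookkeeping the packing count near $\partial\Omega$ so that the claimed dependence of $C_{1}$ on $d$ alone is genuinely preserved.
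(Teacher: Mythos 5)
Your construction follows the same skeleton as the paper's proof: pick a length scale $H\lesssim\delta$ dictated by the budget $m\sim|D^{\ast}|H^{-d}$, cover $D$ by cells of that size (the buffer $\delta$ keeping them inside $D^{\ast}$), apply a first-order local approximation estimate cell by cell, and sum. The difference is the realization: the paper fixes a \emph{conforming}, quasi-uniform triangulation of $\Omega$, takes the elements meeting $D$, and uses continuous $P_1$ hat functions with the Cl\'ement interpolation estimate, whereas you use a geometry-oblivious uniform cube grid with piecewise constants and Poincar\'e--Wirtinger. Your interior argument (cubes fully inside $\Omega$) is sound and is essentially the paper's argument in disguise.

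The genuine gap is exactly at the cubes straddling $\partial\Omega$, and it is worse than a constant-tracking issue. First, $Q_{\alpha}\cap D^{\ast}$ (equivalently $Q_{\alpha}\cap\Omega$, inside the buffer) need not be connected: take $\Omega$ locally the hypograph of a Lipschitz function whose slope relative to the grid exceeds $1$ (a zigzag with slope $\pm 3$, say); then a cube can intersect $\Omega$ in several disjoint pockets. On such a set the Poincar\'e--Wirtinger inequality fails outright --- a function equal to different constants on different pockets has zero gradient --- so no choice of the constant $C_{d}$ rescues the per-cube estimate, and a single average per cube cannot work. Second, the extension fix you sketch does not close this: standard Lipschitz extension operators give the inhomogeneous bound $\Vert Eu\Vert_{H^{1}}\leq C\Vert u\Vert_{H^{1}}$, while you need a gradient-only bound $\Vert\nabla Eu\Vert_{L^{2}(Q_{\alpha})}\leq C\Vert\nabla u\Vert_{L^{2}(Q_{\alpha}'\cap\Omega)}$ (subtracting a mean first is circular, since that again requires a Poincar\'e inequality on the bad set). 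Such homogeneous bounds do exist via bi-Lipschitz reflection in boundary charts, but then (i) $H$ must be below the localization scale of the Lipschitz atlas of $\partial\Omega$ and the reflected points must stay inside the $\delta$-buffer where $u$ is even defined, which imposes restrictions on $m$ \emph{not} implied by $m\geq C_{1}|D^{\ast}|\delta^{-d}$, and (ii) all constants, including the packing count of boundary cubes (which needs a corkscrew-type bound $|B(y,H)\cap\Omega|\gtrsim H^{d}$), inherit the Lipschitz character of $\partial\Omega$, contradicting the claimed dependence on $d$ alone.

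This is precisely the difficulty the paper's construction is designed to avoid: because the mesh triangulates $\Omega$ exactly (possible since $\Omega$ is a Lipschitz polytope, and fixed once and for all), every cell lies in $\overline{\Omega}$, the Cl\'ement estimate on the submesh $\widetilde{D}_{H}$ holds with shape-regularity constants, and the cell count is controlled by $|\widetilde{D}_{H}|\leq|D^{\ast}|$ with no correction for volume sticking outside $\Omega$. If you replace your free-floating cube grid by cells of a fixed conforming mesh family of $\Omega$ (piecewise constants plus per-element Poincar\'e--Wirtinger would then be fine), your argument goes through; as written, the boundary cubes leave a real hole.
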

\begin{proof}
\mm{First of all, let us fix a quasi-uniform family of triangulations $\{ \mathcal{T}_{H}\}_{H>0}$ of $\Omega$ with $\max_{T\in \mathcal{T}_{H}} h_{T} = H \lesssim \min_{T\in \mathcal{T}_{H}} h_{T}$, constructed by successively refining an arbitrary initial mesh. For any open connected subsets ${D}\subset{D}^{\ast}$ of $\Omega$ with $\delta = {\rm dist}({D},\,\partial {D}^{\ast}\setminus\partial \Omega)>0$, we can select a triangulation $\mathcal{T}_{H}$ constructed above with $0<H\leq\delta$.} Let $\widetilde{\mathcal{T}}_{H}$ denote the collection of elements in $\mathcal{T}_{H}$ that intersect $D$, i.e.,
\begin{equation}
    \widetilde{\mathcal{T}}_{H}  = \big\{T\in \mathcal{T}_{H}:T\cap D\neq \emptyset \big\},
\end{equation}
and \mm{let} $\widetilde{D}_{H}$ denote the domain made of the elements in $\widetilde{\mathcal{T}}_{H}$. Since $0<H\leq\delta$, we see that $D\subset \widetilde{D}_{H} \subset {D}^{\ast}$. Denote by $\mathcal{N}$ the set of vertices of $\widetilde{\mathcal{T}}_{H}$ and by $\{g_{N}|N\in \mathcal{N}\}$ the corresponding basis of hat functions. Let $m=\#\mathcal{N}$. We define the desired approximation space $V_{m}({D}^{\ast})\subset L^{2}(D^{\ast})$ as 
\begin{equation}
V_{m}({D}^{\ast}) = {\rm span} \big\{\widetilde{g}_{N}: \widetilde{g}_{N}|_{\widetilde{D}_{H}} = g_{N},\; \widetilde{g}_{N}|_{{D}^{\ast}\setminus\widetilde{D}_{H}} = 0 \quad \forall N\in \mathcal{N}\big\}.    
\end{equation}
Using the approximation property of the Cl\'{e}ment interpolation \cite{clement1975approximation} and the fact that $D\subset \widetilde{D}_{H} \subset {D}^{\ast}$, we see that for any $u\in H^{1}(D^{\ast})$,
\begin{equation}\label{eq:3-16}
\begin{array}{lll}
{\displaystyle \inf_{v\in V_{m}({D}^{\ast})} \Vert u-v\Vert_{L^{2}(D)}\leq \inf_{v\in \mm{V_{m}({D}^{\ast})}} \Vert u-v\Vert_{L^{2}(\widetilde{D}_{H})} }\\[4mm] {\displaystyle\qquad  \leq C_{d,0}H\Vert \nabla u\Vert_{L^{2}(\widetilde{D}_{H})} \leq C_{d,0}H \Vert \nabla u\Vert_{L^{2}(D^{\ast})} }.  
\end{array}
\end{equation}
Moreover, by the quasi uniformity of the mesh, we have $C_{d,1} |\widetilde{D}_{H}|\leq mH^{d}\leq C_{d,2}|\widetilde{D}_{H}|$, which, combining with \cref{eq:3-16}, gives that
\begin{equation}
\begin{array}{lll}
{\displaystyle
 \inf_{v\in V_{m}({D}^{\ast})} \Vert u-v\Vert_{L^{2}(D)}\leq C_{d,0}C_{d,2}^{1/d}m^{-1/d} |\widetilde{D}_{H}|^{1/d} \Vert \nabla u\Vert_{L^{2}(D^{\ast})} }\\[4mm]
 {\displaystyle \qquad \leq C_{d,0}C_{d,2}^{1/d} m^{-1/d} |D^{\ast}|^{1/d} \Vert \nabla u\Vert_{L^{2}(D^{\ast})}.}  
\end{array}
\end{equation}
\mm{Finally, to guarantee the existence of an $H$ satisfying $C_{d,1} |\widetilde{D}_{H}|\leq mH^{d}\leq C_{d,2}|\widetilde{D}_{H}|$, $0<H\leq \delta$, and that $\mathcal{T}_{H}\in \{ \mathcal{T}_{H}\}$, it is sufficient that $m\geq C_{d,1}|D^{\ast}|(\delta/2)^{-d}$ (slightly increasing $C_{d,2}$ if necessary).}
\end{proof}
\begin{rem}
The $L^{2}$ approximation error in \cref{eq:3-16-1} is only estimated in the interior of $D^{\ast}$ as here we do not impose any regularity conditions on the boundary of $D^{\ast}$ (except on the part $\partial D^{\ast}\cap \partial \Omega)$, i.e., $\partial D^{\ast}\cap \Omega$ can be very rough. If $D^{\ast}$ is a sufficiently regular domain, e.g., a sphere, cube, or tetrahedron, we can obtain the $L^{2}$ approximation error in the whole of $D^{\ast}$ without any assumption on $m$. More generally, if $D^{\ast}$ has a Lipschitz boundary so that the embedding $H^{1}(D^{\ast})\subset L^{2}(D^{\ast})$ is compact, then there exist $m$-dimensional spaces $V_{m}({D}^{\ast})\subset L^{2}(D^{\ast})$ such that
\begin{equation}
\inf_{v\in V_{m}({D}^{\ast})} \Vert u-v\Vert_{L^{2}({D}^{\ast})}\leq \lambda^{-1/2}_{m+1} \,\Vert \nabla u \Vert_{L^{2}({D}^{\ast})}\quad \forall u\in H^{1}(D^{\ast}),
\end{equation}
where $\lambda_{m+1}$ denotes the $(m+1)$-th eigenvalue of the Laplace operator on \mm{${D}^{\ast}$} with the Neumann boundary condition on \mm{$\partial {D}^{\ast}$}. In \cite{solomyak1980quantitative,netrusov2005weyl}, it was proved that $\lambda^{-1/2}_{m+1} \leq Cm^{-1/d} |D^{\ast}|^{1/d}$, where $C>0$ may depend on the "roughness" of the boundary $\partial D^{\ast}$. Indeed, this approximation result can be proved by using a similar technique as in \cref{lem:3-3-0} (interpolation error estimates on a mesh covering the domain $D^{\ast}$) and the extension property for Lipschitz domains (see, e.g., \cite[Theorem 5, Chapter VI]{stein1970singular}). Finally, we note that \cref{lem:3-3-0} can also be proved for the case where $\partial D^{\ast}\cap \partial \Omega$ is $C^{1}$ smooth, using an extension technique as in \cite{babuska2011optimal} and a similar argument as above. In this case, the constants $C_{1}$ and $C_{2}$ may depend on $\partial D^{\ast}\cap \partial \Omega$.
\end{rem}

The following lemma shows that the approximation result in \cref{lem:3-3-0} can be extended to any closed subspace of $H^{1}(D^{\ast})$. The key point is that the approximation space is required to be in the given subspace.

\begin{lemma}\label{lem:3-3}
Let ${D}$ and ${D}^{\ast}$ be open connected subsets of $\Omega$ with ${D}\subset{D}^{\ast}$ and $\delta = {\rm dist}({D},\,\partial {D}^{\ast}\setminus\partial \Omega)>0$, and let $\mathcal{S}(D^{\ast})$ be a closed subspace of $H^{1}(D^{\ast})$. In addition, let \mm{the} constants $C_{1}$ and $C_{2}$ be as in \cref{lem:3-3-0}. Then, for each integer $m\geq C_{1}|D^{\ast}|\delta^{-d}$, there exists an $m$-dimensional space $\Psi_{m}({D}^{\ast})\subset \mathcal{S}(D^{\ast})$ such that
\begin{equation}\label{eq:3-17}
\inf_{\varphi\in \Psi_{m}({D}^{\ast})} \Vert u-\varphi\Vert_{L^{2}({D})}\leq C_{2}m^{-1/d} |D^{\ast}|^{1/d} \rmm{\Vert u \Vert_{H^{1}({D}^{\ast})}}\quad \forall u\in \mathcal{S}(D^{\ast}).
\end{equation}
\rmm{In addition, if the $H^{1}$-seminorm $\Vert \nabla \cdot\Vert_{L^{2}(D^{\ast})}$ is a norm on $\mathcal{S}(D^{\ast})$ equivalent to the standard $H^{1}$-norm, then $\Psi_{m}({D}^{\ast})\subset \mathcal{S}(D^{\ast})$ can be chosen such that
\begin{equation}\label{eq:3-18}
\inf_{\varphi\in \Psi_{m}({D}^{\ast})} \Vert u-\varphi\Vert_{L^{2}({D})}\leq C_{2}m^{-1/d} |D^{\ast}|^{1/d} \Vert \nabla u \Vert_{L^{2}({D}^{\ast})}\quad \forall u\in \mathcal{S}(D^{\ast}).
\end{equation}}
\end{lemma}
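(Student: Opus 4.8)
The plan is to reinterpret \cref{eq:3-17} as an upper bound on a Kolmogorov $m$-width of a suitable restriction operator, and to exploit the fact that for a compact operator between Hilbert spaces the width-optimal approximating subspace is spanned by left singular functions and hence lies in the \emph{range} of the operator. This is the crucial point. The space $V_{m}(D^{\ast})$ produced by \cref{lem:3-3-0} is built from (modified) hat functions and does not lie in $\mathcal{S}(D^{\ast})$, so it cannot serve directly as $\Psi_{m}(D^{\ast})$; the naive attempt to force an $m$-dimensional subspace of $\mathcal{S}(D^{\ast})$ to reproduce $V_{m}(D^{\ast})|_{D}$ is impossible. Instead I will use $V_{m}(D^{\ast})$ only to \emph{bound} the width, and then read off an admissible approximating space from the singular functions, which automatically lie in $\mathcal{S}(D^{\ast})$ while keeping the same constant $C_{2}$.

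Concretely, let $R\colon \big(\mathcal{S}(D^{\ast}),\Vert\cdot\Vert_{H^{1}(D^{\ast})}\big)\rightarrow L^{2}(D)$ be the restriction operator $Ru=u|_{D}$. The first step is to show that $R$ is compact. Since $\delta={\rm dist}(D,\,\partial D^{\ast}\setminus\partial\Omega)>0$, I choose via \cref{eq:3-0} a cut-off $\eta\in C^{1}(\overline{D^{\ast}})$ with $\eta=1$ in $D$, $\eta=0$ on $\partial D^{\ast}\setminus\partial\Omega$, and $|\nabla\eta|\leq C_{d}/\delta$. For $u\in\mathcal{S}(D^{\ast})$ the extension of $\eta u$ by zero to all of $\Omega$ lies in $H^{1}(\Omega)$ (the extension is admissible precisely because $\eta$ vanishes in a neighbourhood of the interior boundary $\partial D^{\ast}\setminus\partial\Omega$, regardless of its roughness), with $\Vert \widetilde{\eta u}\Vert_{H^{1}(\Omega)}\leq C(\eta)\Vert u\Vert_{H^{1}(D^{\ast})}$. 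Because $\Omega$ is a bounded Lipschitz domain, the Rellich theorem makes $H^{1}(\Omega)\hookrightarrow L^{2}(\Omega)$ compact, and since $\eta\equiv1$ on $D$ we have $Ru=\widetilde{\eta u}|_{D}$; hence $R$ is compact. As $\mathcal{S}(D^{\ast})$ is a closed subspace of the Hilbert space $H^{1}(D^{\ast})$, it is itself a Hilbert space, so the singular value machinery applies.

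Now I invoke \cite[Theorem 2.5, Chapter 4]{pinkus1985n}, exactly as in the proof of \cref{lem:3-2}: the width $d_{m}(R):=\inf_{\dim W=m}\sup_{\Vert u\Vert_{H^{1}(D^{\ast})}\leq1}\inf_{w\in W}\Vert Ru-w\Vert_{L^{2}(D)}$ is attained by $\widehat{W}_{m}={\rm span}\{R\phi_{1},\dots,R\phi_{m}\}$, where $\{\phi_{k}\}\subset\mathcal{S}(D^{\ast})$ are the right singular functions of $R$. I then \emph{define} $\Psi_{m}(D^{\ast})={\rm span}\{\phi_{1},\dots,\phi_{m}\}\subset\mathcal{S}(D^{\ast})$. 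Bounding the width by the single trial space $V_{m}(D^{\ast})|_{D}$ from \cref{lem:3-3-0} (which has dimension $\leq m$ and may be enlarged to exactly $m$) gives, using $\Vert\nabla u\Vert_{L^{2}(D^{\ast})}\leq\Vert u\Vert_{H^{1}(D^{\ast})}$, the bound $d_{m}(R)\leq C_{2}m^{-1/d}|D^{\ast}|^{1/d}$ for $m\geq C_{1}|D^{\ast}|\delta^{-d}$. Since $R(\Psi_{m}(D^{\ast}))=\widehat{W}_{m}$ (on the nonzero singular values), for every $u\in\mathcal{S}(D^{\ast})$ one has $\inf_{\varphi\in\Psi_{m}(D^{\ast})}\Vert u-\varphi\Vert_{L^{2}(D)}=\inf_{w\in\widehat{W}_{m}}\Vert Ru-w\Vert_{L^{2}(D)}\leq d_{m}(R)\,\Vert u\Vert_{H^{1}(D^{\ast})}$, which is \cref{eq:3-17}. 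The degenerate cases in which some singular values vanish, or $\dim\mathcal{S}(D^{\ast})<m$, are handled by padding $\Psi_{m}(D^{\ast})$ with elements of $\ker R\subset\mathcal{S}(D^{\ast})$, which restrict to zero on $D$ and therefore do not affect the estimate.

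For the second assertion \cref{eq:3-18}, I repeat the entire argument with $\mathcal{S}(D^{\ast})$ endowed with the inner product $\int_{D^{\ast}}\nabla u\cdot\nabla v\,d{\bm x}$; the equivalence hypothesis guarantees this is a norm equivalent to $\Vert\cdot\Vert_{H^{1}(D^{\ast})}$, so $\mathcal{S}(D^{\ast})$ remains a Hilbert space, $R$ remains compact, and \cref{lem:3-3-0} bounds the corresponding width by the same $C_{2}m^{-1/d}|D^{\ast}|^{1/d}$ but now against $\Vert\nabla u\Vert_{L^{2}(D^{\ast})}$, yielding \cref{eq:3-18}. The main obstacle is the conceptual one already isolated above: ensuring the approximating space lies inside $\mathcal{S}(D^{\ast})$ with no loss in the constant. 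This is resolved not by modifying $V_{m}(D^{\ast})$ but by the range/optimality property of the singular functions, together with the compactness of $R$ up to the possibly rough interior boundary $\partial D^{\ast}\cap\Omega$, which the cut-off-and-extend step handles cleanly.
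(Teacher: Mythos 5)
Your proposal is correct and takes essentially the same route as the paper: both bound the Kolmogorov $m$-width of the restriction operator $R\colon\mathcal{S}(D^{\ast})\rightarrow L^{2}(D)$ by the trial space $V_{m}(D^{\ast})$ from \cref{lem:3-3-0}, then take $\Psi_{m}(D^{\ast})$ to be the span of the singular functions of $R$ (the paper writes them as eigenfunctions of $(\psi,v)_{L^{2}(D)}=\lambda\,(\psi,v)_{H^{1}(D^{\ast})}$), which lie in $\mathcal{S}(D^{\ast})$ by construction, and both handle the seminorm case by rerunning the argument with the $H^{1}$-seminorm inner product. The only divergence is the compactness of $R$: the paper deduces it directly from $d_{m}(R)\rightarrow 0$ via \cite[Proposition 7.1]{pinkus1985n}, whereas you prove it by cut-off, zero extension, and Rellich on $\Omega$ --- also valid, but note that \cref{eq:3-0} only guarantees $\eta=0$ \emph{on} $\partial D^{\ast}\setminus\partial\Omega$ rather than in a neighbourhood of it, so to justify the zero extension across a possibly rough interior boundary you should take $\eta$ supported in, say, the $\delta/2$-neighbourhood of $D$.
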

\begin{proof}
\rmm{Since $\mathcal{S}(D^{\ast})$ is a closed subspace of $H^{1}(D^{\ast})$, we see that $(\mathcal{S}(D^{\ast}), \Vert\cdot\Vert_{H^{1}(D^{\ast})})$ is a Hilbert space. Now let us consider the following $n$-width:
\begin{equation}\label{eq:3-19}
d_{m}(R)=\inf_{Q(m)\subset L^{2}({D})}\sup_{u\in \mathcal{S}(D^{\ast})} \inf_{v\in Q(m)}\frac {\Vert R(u)-v\Vert_{L^{2}({D})}}{\Vert{u}\Vert_{H^{1}(D^{\ast})}},
\end{equation}
where $R: \mathcal{S}(D^{\ast})\rightarrow L^{2}(D)$ is the restriction operator defined as $R(u) = u|_{D}$. Since $\mathcal{S}(D^{\ast})\subset H^{1}(D^{\ast})$, we can deduce from \cref{lem:3-3-0} that for each $m\geq C_{1}|D^{\ast}|\delta^{-d}$, 
\begin{equation}\label{eq:3-20}
d_{m}(R)\leq C_{2}m^{-1/d} |D^{\ast}|^{1/d}.
\end{equation}
Hence, $d_{m}(R)\rightarrow 0$ as $m\rightarrow \infty$. It follows that the operator $R$ is compact (see, e.g., \cite[Proposition 7.1]{pinkus1985n}). For $j\in \mathbb{N}$, let $\psi_{j}$ denote the $j$-th eigenvector of the problem:
\begin{equation}\label{eq:3-21}
(\psi,\,v)_{L^{2}({D})} = \lambda\,(\psi,\,v)_{H^{1}(D^{\ast})} \quad \forall v\in \mathcal{S}({D}^{\ast}),
\end{equation}
and define $\Psi_{m}({D}^{\ast}) = {\rm span}\{\psi_{1},\ldots,\psi_{m}\} \subset \mathcal{S}({D}^{\ast})$. Using the characterization of $n$-widths in Hilbert spaces (see, e.g., \cite[Theorem 2.5]{pinkus1985n}), we see that the range of $R$ restricted to $\Psi_{m}({D}^{\ast})$ is an optimal approximation space associated with $d_{m}(R)$. Combining this fact with \cref{eq:3-20} yields that for each $m\geq C_{1}|D^{\ast}|\delta^{-d}$,
\begin{equation}\label{eq:3-22}
\inf_{\varphi\in \Psi_{m}({D}^{\ast})} \Vert u-\varphi\Vert_{L^{2}({D})}\leq  C_{2}m^{-1/d} |D^{\ast}|^{1/d} \Vert u \Vert_{H^{1}({D}^{\ast})}\quad \forall u\in \mathcal{S}(D^{\ast}).
\end{equation}
If the $H^{1}$-seminorm $\Vert \nabla \cdot\Vert_{L^{2}(D^{\ast})}$ is equivalent to the norm $\Vert \cdot\Vert_{H^{1}(D^{\ast})}$ on $\mathcal{S}(D^{\ast})$, then $(\mathcal{S}(D^{\ast}), \Vert\nabla \cdot\Vert_{L^{2}(D^{\ast})})$ is a Hilbert space. By modifying the definition of the $n$-width $d_{m}(R)$ accordingly and proceeding in a similar way as above, we get \cref{eq:3-18}.  }
\end{proof}

A combination of \cref{lem:3-3} and the Caccioppoli-type inequality in \cref{lem:3-1} gives the following approximation result in the energy norm.
\begin{lemma}\label{lem:3-5}
Let ${D}$ and ${D}^{\ast}$ be open connected subsets of $\Omega$ with ${D}\subset{D}^{\ast}$ and $\delta = {\rm dist}({D},\,\partial {D}^{\ast}\setminus\partial \Omega)>0$, and let \mm{the} constants $C_{1}$ and $C_{2}$ be as in \cref{lem:3-3-0}. Then, for each integer $m\geq C_{1}|D^{\ast}|(\delta/2)^{-d}$, there exits an $m$-dimensional space $\Psi_{m}({D}^{\ast})\subset H_{a,\varepsilon}({D}^{\ast})$ such that for any $u\in H_{a,\varepsilon}({D}^{\ast})$,
\begin{equation}\label{eq:3-33}
\inf_{\varphi\in \Psi_{m}({D}^{\ast})} \Vert u-\varphi\Vert_{a,\varepsilon,{D}}\leq 2\mm{C_{d}}C_{2} \Big(\frac{{a_{\rm max}}}{{a_{\rm min}}}\Big)^{1/2}|{D}^{\ast}|^{1/d}m^{-1/d}\delta^{-1}\,\Vert u \Vert_{a,\varepsilon,{D}^{\ast}},
\end{equation}
\mm{where $C_{d}>0$ is given by \cref{eq:3-0}}.
\end{lemma}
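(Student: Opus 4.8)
The plan is to combine the $L^{2}$-approximation result of \cref{lem:3-3}, applied to the closed subspace $\mathcal{S}(D^{\ast})=H_{a,\varepsilon}(D^{\ast})$, with the Caccioppoli-type inequality \cref{eq:3-6} to turn an $L^{2}$ estimate on an intermediate shell into an energy estimate on $D$. First I would insert an intermediate domain: choose an open connected $\widehat{D}$ with $D\subset\widehat{D}\subset D^{\ast}$, ${\rm dist}(D,\partial\widehat{D}\setminus\partial\Omega)\ge\delta/2$ and ${\rm dist}(\widehat{D},\partial D^{\ast}\setminus\partial\Omega)\ge\delta/2$ (e.g. the set of points of $D^{\ast}$ within distance $\delta/2$ of $D$). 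By \cref{eq:3-0} applied to the pair $D\subset\widehat{D}$ there is a cut-off $\eta\in C^{1}(\overline{\widehat{D}})$ with $\eta=1$ on $D$, $\eta=0$ on $\partial\widehat{D}\setminus\partial\Omega$ and $|\nabla\eta|\le 2C_{d}/\delta$; extending $\eta$ by zero to $D^{\ast}$ leaves it in $W^{1,\infty}(D^{\ast})$ with ${\rm supp}(\eta)\subset\overline{\widehat{D}}$ and $\eta=0$ on $\partial D^{\ast}\cap\Omega$, so it is admissible in \cref{lem:3-1}. The intermediate shell is essential precisely so that $\eta$ vanishes before reaching $\partial D^{\ast}\cap\Omega$, as \cref{lem:3-1} requires, while remaining identically $1$ on $D$.

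Next I would produce the approximation space. Because the target bound \cref{eq:3-33} carries the factor $\Vert u\Vert_{a,\varepsilon,D^{\ast}}$, the seminorm version \cref{eq:3-18} of \cref{lem:3-3} (not the weaker \cref{eq:3-17}) is the one to use, and this is exactly what lets the powers of $\varepsilon$ cancel. To invoke \cref{eq:3-18} I must verify that on $\mathcal{S}(D^{\ast})=H_{a,\varepsilon}(D^{\ast})$ the seminorm $\Vert\nabla\cdot\Vert_{L^{2}(D^{\ast})}$ is a norm equivalent to $\Vert\cdot\Vert_{H^{1}(D^{\ast})}$. That it is a norm is immediate: if $\nabla u=0$ then $u$ is constant, and testing the defining relation \cref{eq:3-1} with $v\in H^{1}_{0}(D^{\ast})$ forces $\int_{D^{\ast}}uv\,d{\bm x}=0$, hence $u=0$, so $H_{a,\varepsilon}(D^{\ast})$ contains no nonzero constant. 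The equivalence then follows by a compactness argument: were it to fail, there would be $w_{n}\in H_{a,\varepsilon}(D^{\ast})$ with $\Vert w_{n}\Vert_{H^{1}(D^{\ast})}=1$ and $\Vert\nabla w_{n}\Vert_{L^{2}(D^{\ast})}\to0$; by the compact embedding $H^{1}(D^{\ast})\hookrightarrow L^{2}(D^{\ast})$ (the same Rellich property used to make $P_{i}$ compact) and the closedness of $H_{a,\varepsilon}(D^{\ast})$, a subsequence would converge to a nonzero constant lying in $H_{a,\varepsilon}(D^{\ast})$, a contradiction. Thus \cref{lem:3-3} applies to the pair $\widehat{D}\subset D^{\ast}$, whose separation is at least $\delta/2$ (which is why the hypothesis reads $m\ge C_{1}|D^{\ast}|(\delta/2)^{-d}$), and yields an $m$-dimensional $\Psi_{m}(D^{\ast})\subset H_{a,\varepsilon}(D^{\ast})$ with $\inf_{\varphi}\Vert u-\varphi\Vert_{L^{2}(\widehat{D})}\le C_{2}m^{-1/d}|D^{\ast}|^{1/d}\Vert\nabla u\Vert_{L^{2}(D^{\ast})}$ for all $u\in H_{a,\varepsilon}(D^{\ast})$.

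Finally I would assemble the estimate. For $u\in H_{a,\varepsilon}(D^{\ast})$ pick the $L^{2}(\widehat{D})$-optimal $\varphi\in\Psi_{m}(D^{\ast})$; then $u-\varphi\in H_{a,\varepsilon}(D^{\ast})$ and, since $\eta\equiv1$ on $D$ and ${\rm supp}(\eta)\subset\overline{\widehat{D}}$, \cref{eq:3-6} gives
\[
\Vert u-\varphi\Vert_{a,\varepsilon,D}\le\Vert\eta(u-\varphi)\Vert_{a,\varepsilon,D^{\ast}}\le\varepsilon a_{\rm max}^{1/2}\,\frac{2C_{d}}{\delta}\,\Vert u-\varphi\Vert_{L^{2}(\widehat{D})}.
\]
Inserting the $L^{2}$ bound and then using $\Vert\nabla u\Vert_{L^{2}(D^{\ast})}\le(\varepsilon a_{\rm min}^{1/2})^{-1}\Vert u\Vert_{a,\varepsilon,D^{\ast}}$, which follows from $\Vert u\Vert_{a,\varepsilon,D^{\ast}}^{2}\ge\varepsilon^{2}a_{\rm min}\Vert\nabla u\Vert_{L^{2}(D^{\ast})}^{2}$ by \cref{eq:2-2}, the two factors of $\varepsilon$ cancel and reproduce exactly the constant $2C_{d}C_{2}(a_{\rm max}/a_{\rm min})^{1/2}$ of \cref{eq:3-33}. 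The main obstacle is the middle step, namely justifying that the seminorm version \cref{eq:3-18} is available, i.e. the seminorm/norm equivalence on $H_{a,\varepsilon}(D^{\ast})$; once that is in place the remainder is bookkeeping of constants and the clean cancellation of $\varepsilon$.
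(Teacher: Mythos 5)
Your overall architecture --- an intermediate domain at distance $\delta/2$, an application of \cref{lem:3-3} with $\mathcal{S}(D^{\ast})=H_{a,\varepsilon}(D^{\ast})$, then the Caccioppoli inequality \cref{eq:3-6} with a cut-off equal to $1$ on $D$ --- is exactly the paper's, and your bookkeeping of constants correctly reproduces \cref{eq:3-33}. The gap is in the middle step: you invoke the seminorm version \cref{eq:3-18}, whose hypothesis (equivalence of $\Vert\nabla\cdot\Vert_{L^{2}(D^{\ast})}$ with $\Vert\cdot\Vert_{H^{1}(D^{\ast})}$ on $H_{a,\varepsilon}(D^{\ast})$) you justify by a compactness argument resting on the Rellich embedding $H^{1}(D^{\ast})\hookrightarrow L^{2}(D^{\ast})$. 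That embedding is not available here: the lemma assumes only that $D^{\ast}$ is open and connected with a distance condition, and no regularity at all is imposed on $\partial D^{\ast}\cap\Omega$ --- the remark following \cref{lem:3-3-0} stresses precisely that this part of the boundary ``can be very rough'', which is why all approximation estimates in this section are interior ones. For a bounded open set without boundary regularity, $H^{1}\hookrightarrow L^{2}$ need not be compact (rooms-and-corridors type domains), so your norm-equivalence claim is unproven. This matters in practice: when \cref{lem:3-5} is used in the proof of \cref{thm:3-2} via \cref{lem:3-6}, the sets $D^{\ast}=\omega^{k}$ are nested domains defined purely by distance conditions and are not known to be Lipschitz. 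Note also that the compactness of $P_{i}$ is a different situation: there the target space is $L^{2}$ of the interior subdomain, well separated from the rough part of the boundary, whereas your argument needs compactness up to $\partial D^{\ast}\cap\Omega$ itself.

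The fix --- and what the paper actually does --- is to avoid the equivalence question altogether: use the full-norm version \cref{eq:3-17} and bound $\big(\Vert\nabla u\Vert^{2}_{L^{2}(D^{\ast})}+\Vert u\Vert^{2}_{L^{2}(D^{\ast})}\big)^{1/2}\leq a_{\rm min}^{-1/2}\varepsilon^{-1}\Vert u\Vert_{a,\varepsilon,D^{\ast}}$, which follows from \cref{eq:2-2} under the harmless normalization $\varepsilon^{-1}a_{\rm min}^{-1/2}\geq 1$ (assumed without loss of generality in the paper). So your assertion that the seminorm version is ``exactly what lets the powers of $\varepsilon$ cancel'' is not right: the factor $\varepsilon^{-1}$ produced this way is cancelled by the factor $\varepsilon$ from the Caccioppoli step just as in your computation, and the same constant $2C_{d}C_{2}(a_{\rm max}/a_{\rm min})^{1/2}$ emerges. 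Everything else in your proposal --- the cut-off construction via \cref{eq:3-0}, its admissibility in \cref{lem:3-1}, the observation that only a qualitative completeness property (not a quantitative equivalence constant) would enter $C_{2}$, and the final assembly --- is correct.
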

\begin{proof}
Denote by $D_{\delta/2}$ the open connected subset of $\Omega$ satisfying ${D}\subset D_{\delta/2}\subset{D}^{\ast}$ and ${\rm dist}({D},\,\partial {D}_{\delta/2}\setminus\partial \Omega) = {\rm dist}({D}_{\delta/2},\,\partial {D}^{\ast}\setminus\partial \Omega) = \delta/2$. Note that $H_{a,\varepsilon}({D}^{\ast})$ is a closed subspace of $H^{1}(D^{\ast})$. We first apply \cref{lem:3-3} on $D_{\delta/2}$ and $D^{\ast}$ to deduce that for each $m\geq C_{1}|D^{\ast}|(\delta/2)^{-d}$, there exists an $m$-dimensional space $\Psi_{m}({D}^{\ast})\subset H_{a,\varepsilon}({D}^{\ast})$ such that for any $u\in H_{a,\varepsilon}({D}^{\ast})$,
\begin{equation}\label{eq:3-34}
\begin{array}{lll}
{\displaystyle \inf_{\varphi\in \Psi_{m}({D}^{\ast})} \Vert u-\varphi\Vert_{L^{2}(D_{\delta/2})}\leq C_{2}m^{-1/d} |D^{\ast}|^{1/d} \rmm{\big(\Vert \nabla u \Vert^{2}_{L^{2}({D}^{\ast})} + \Vert u \Vert^{2}_{L^{2}({D}^{\ast})}\big)^{1/2}} }\\[4mm]
{\displaystyle \qquad \qquad \qquad \qquad \qquad \quad \;\;\leq C_{2} a_{\rm min}^{-1/2} \varepsilon^{-1} m^{-1/d} |D^{\ast}|^{1/d} \,\Vert u \Vert_{a,\varepsilon,{D}^{\ast}}.}
\end{array}
\end{equation}
\rmm{Here we have assumed that $\varepsilon^{-1}a^{-1/2}_{\rm min}\geq 1$ without loss of generality.} To proceed, we choose a cut-off function $\eta\in C^{1}(\overline{{D}_{\delta/2}})$ with $\eta = 0$ on $\partial {D}_{\delta/2}\cap\Omega$, $\eta =1$ in ${D}$, and $|\nabla \eta|\leq 2\mm{C_{d}}/\delta$. Combining \cref{eq:3-6} and \cref{eq:3-34}, we see that for any $u\in H_{a,\varepsilon}({D}^{\ast})$,
\begin{equation}\label{eq:3-34-0}
\begin{array}{lll}
{\displaystyle \inf_{\varphi\in \Psi_{m}({D}^{\ast})} \Vert \eta(u-\varphi)\Vert_{a,\varepsilon,{D}_{\delta/2}} \leq \big(2\mm{C_{d}}\varepsilon a_{\rm max}^{1/2}/\delta\big) \inf_{\varphi\in \Psi_{m}({D}^{\ast})} \Vert u-\varphi\Vert_{L^{2}(D_{\delta/2})} }\\[3mm]
{\displaystyle \qquad \quad \leq 2\mm{C_{d}}C_{2} \big(a_{\rm max}/a_{\rm min}\big)^{1/2} \delta^{-1} m^{-1/d} |D^{\ast}|^{1/d} \Vert u \Vert_{a,\varepsilon,{D}^{\ast}}.}
\end{array}
\end{equation}
The desired estimate \cref{eq:3-33} follows immediately by noting that $\eta =1$ in ${D}$.
\end{proof}


\Cref{lem:3-5} shows that any generalized harmonic function can be approximated in a finite-dimensional space in the energy norm restricted to a subdomain with an algebraic convergence rate. In what follows, we apply \cref{lem:3-5} to a family of nested domains between the subdomain $\omega$ and the oversampling domain $\omega^{\ast}$ to obtain a finite-dimensional approximation space with a superalgebraic convergence rate. Recall that $\delta^{\ast} = {\rm dist}(\omega,\,\partial \omega^{\ast}\setminus\partial \Omega)$. For any integer $N\geq 1$, denote by $\{\omega^{j}\}_{j=1}^{N+1}$ a family of nested domains with $\omega=\omega^{N+1}\subset \omega^{N}\subset\cdots\subset\omega^{1} = \omega^{\ast}$ and ${\rm dist}(\omega^{k},\partial \omega^{k+1}\setminus\partial \Omega) = \delta^{\ast}/N$. Let $n=N\times m$. We define 
\begin{equation}\label{eq:3-35}
\Psi(n,\omega,\omega^{\ast}) = \Psi_{m}(\omega^{1})+ \cdots+\Psi_{m}(\omega^{N}).
\end{equation}

A repeated application of \cref{lem:3-5} on the nested domains gives the following approximation result for the space $\Psi(n,\omega,\omega^{\ast})$.
\begin{lemma}\label{lem:3-6}
Let $\chi$ be the partition of unity function supported on $\omega$, and let the constants $C_{1}$ and $C_{2}$ be as in \cref{lem:3-3-0}. In addition, let $m$ and $N$ satisfy $m\geq C_{1} |\omega^{\ast}| (2N/\delta^{\ast})^{d}$. Then, for any $u\in H_{a,\varepsilon}(\omega^{\ast})$, 
\begin{equation}\label{eq:3-36}
\inf_{\varphi \in \Psi(n,\omega,\omega^{\ast})} \Vert \chi(u-\varphi)\Vert_{a,\varepsilon,\omega} \leq \varepsilon a_{\rm max}^{1/2}\Vert \nabla\chi\Vert_{L^{\infty}(\omega)} \xi^{N} \Vert u \Vert_{a,\varepsilon,\omega^{\ast}},
\end{equation}
where $\xi$ is given by
\begin{equation}\label{eq:3-37}
\xi = \xi(m,N)=2\mm{C_{d}}C_{2}Nm^{-1/d}\Big(\frac{{a_{\rm max}}}{{a_{\rm min}}}\Big)^{1/2}\frac{|\omega^{\ast}|^{1/d}}{\delta^{\ast}}.
\end{equation}
\end{lemma}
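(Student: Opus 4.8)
The plan is to build the approximant $\varphi$ by iterating the energy-norm estimate of \cref{lem:3-5} across the nested shells $\omega=\omega^{N+1}\subset\omega^{N}\subset\cdots\subset\omega^{1}=\omega^{\ast}$, each consecutive pair being separated by distance $\delta^{\ast}/N$, and then to convert the resulting energy-norm bound into the desired bound on $\Vert\chi(\cdot)\Vert_{a,\varepsilon,\omega}$ by a single application of the Caccioppoli-type inequality \cref{eq:3-6}. The exponential factor $\xi^{N}$ in \cref{eq:3-36} will arise precisely as the product of the $N$ algebraically small contractions supplied by \cref{lem:3-5}, one per shell.

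First I would record the elementary restriction property that makes the iteration well defined: if $w\in H_{a,\varepsilon}(D^{\ast})$ and $D\subset D^{\ast}$ are as in the nested family, then $w|_{D}\in H_{a,\varepsilon}(D)$. Indeed, any test function $v\in H^{1}_{0}(D)$ extends by zero to $H^{1}_{0}(D^{\ast})$, so $a_{\varepsilon,D}(w,v)=a_{\varepsilon,D^{\ast}}(w,v)=0$, and the $\Gamma$-boundary condition is inherited from the construction of the nested domains (whose distances are measured away from $\partial\Omega$). Consequently every residual produced below remains generalized harmonic on the next smaller shell.

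Next I would set $w_{0}=u\in H_{a,\varepsilon}(\omega^{1})$ and, for $k=1,\ldots,N$, apply \cref{lem:3-5} with $D^{\ast}=\omega^{k}$ and $D=\omega^{k+1}$, for which $\delta=\delta^{\ast}/N$. Since $m\geq C_{1}|\omega^{\ast}|(2N/\delta^{\ast})^{d}\geq C_{1}|\omega^{k}|(\delta^{\ast}/(2N))^{-d}$, the hypothesis of \cref{lem:3-5} is satisfied at every step, yielding $\varphi_{k}\in\Psi_{m}(\omega^{k})$ such that $w_{k}:=w_{k-1}-\varphi_{k}\in H_{a,\varepsilon}(\omega^{k+1})$ (by the restriction property) and
\begin{equation*}
\Vert w_{k}\Vert_{a,\varepsilon,\omega^{k+1}}\leq \xi_{k}\,\Vert w_{k-1}\Vert_{a,\varepsilon,\omega^{k}},\qquad \xi_{k}=2C_{d}C_{2}\Big(\tfrac{a_{\rm max}}{a_{\rm min}}\Big)^{1/2}|\omega^{k}|^{1/d}m^{-1/d}\tfrac{N}{\delta^{\ast}}.
\end{equation*}
Because $|\omega^{k}|\leq|\omega^{\ast}|$ we have $\xi_{k}\leq\xi$ with $\xi$ as in \cref{eq:3-37}, so chaining the $N$ estimates gives $\Vert w_{N}\Vert_{a,\varepsilon,\omega}\leq\xi^{N}\Vert u\Vert_{a,\varepsilon,\omega^{\ast}}$, where $w_{N}=u-\varphi$ and $\varphi:=\sum_{k=1}^{N}\varphi_{k}\in\Psi(n,\omega,\omega^{\ast})$.

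Finally, since $w_{N}\in H_{a,\varepsilon}(\omega)$ and the partition-of-unity function $\chi$ vanishes on $\partial\omega\cap\Omega$, I would apply \cref{eq:3-6} on $\omega$ with $\eta=\chi$ to obtain $\Vert\chi w_{N}\Vert_{a,\varepsilon,\omega}\leq\varepsilon a_{\rm max}^{1/2}\Vert\nabla\chi\Vert_{L^{\infty}(\omega)}\Vert w_{N}\Vert_{L^{2}(\omega)}$; bounding $\Vert w_{N}\Vert_{L^{2}(\omega)}\leq\Vert w_{N}\Vert_{a,\varepsilon,\omega}\leq\xi^{N}\Vert u\Vert_{a,\varepsilon,\omega^{\ast}}$ then delivers \cref{eq:3-36}. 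I do not expect a deep analytic difficulty here; the one thing to watch carefully is the bookkeeping of the iteration, namely verifying that each residual genuinely lands in $H_{a,\varepsilon}$ of the correct shell and that the $N$ shell-dependent constants $\xi_{k}$ are each dominated by the single $\xi$ defined via $|\omega^{\ast}|$, so that the product telescopes cleanly to $\xi^{N}$.
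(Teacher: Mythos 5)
Your proposal is correct and follows essentially the same route as the paper: a recursive application of \cref{lem:3-5} across the nested shells $\omega^{1}\supset\cdots\supset\omega^{N+1}$ to get the energy-norm bound $\xi^{N}\Vert u\Vert_{a,\varepsilon,\omega^{\ast}}$ on the residual, followed by a single application of the Caccioppoli-type inequality \cref{eq:3-6} with $\eta=\chi$. The only difference is presentational: the paper outsources the iteration bookkeeping to a cited lemma of \cite{ma2022novel}, whereas you spell out the restriction property, the hypothesis check $m\geq C_{1}|\omega^{k}|(2N/\delta^{\ast})^{d}$, and the uniform bound $\xi_{k}\leq\xi$ explicitly — all correctly.
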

\begin{proof}
Following the lines of the proof of \cite[Lemma 3.13]{ma2022novel}, we can use \cref{lem:3-5} recursively to find 
%
$\varphi_{u}^{k}\in \Psi_{m}(\omega^{k})$, $k=1,\cdots,N$, such that 
\begin{equation}\label{eq:3-40}
\begin{array}{lll}
{\displaystyle \Big \Vert u - \sum_{k=1}^{N}\varphi_{u}^{k}\Big\Vert_{a,\varepsilon,\omega}  \leq \xi^{N} \Vert u \Vert_{a,\varepsilon,\omega^{\ast}}. }
\end{array}
\end{equation}
Finally, noting that ${\rm supp}(\chi)\subset \overline{\omega}$, we apply the Caccioppoli-type inequality \cref{eq:3-6} to $u - \sum_{k=1}^{N}\varphi_{u}^{k}\in H_{a,\varepsilon}(\omega)$ and $\eta =\chi$ and combine the result with \cref{eq:3-40}. It follows that
\begin{equation}\label{eq:3-41}
\begin{array}{lll}
{\displaystyle \Big \Vert \chi \Big(u - \sum_{k=1}^{N}\varphi_{u}^{k}\Big)\Big\Vert_{a,\varepsilon,\omega} \leq \varepsilon a_{\rm max}^{1/2} \Vert \nabla\chi\Vert_{L^{\infty}(\omega)} \Big \Vert u - \sum_{k=1}^{N}\varphi_{u}^{k}\Big\Vert_{a,\varepsilon,\omega}}\\[3mm]
{\displaystyle \qquad \qquad \qquad \qquad \qquad \leq \varepsilon a_{\rm max}^{1/2} \Vert \nabla\chi\Vert_{L^{\infty}(\omega)} \xi^{N} \Vert u \Vert_{a,\varepsilon,\omega^{\ast}},}
\end{array}
\end{equation}
which yields \cref{eq:3-37}.

\end{proof}

Having established a superalgebraic error bound for the approximation space $\Psi(n,\omega,\omega^{\ast})$, we are ready to prove \cref{thm:3-2}.
\begin{proof}[Proof of \cref{thm:3-2}] Let $Q(n) := \big\{\chi u: u\in \Psi(n,\omega,\omega^{\ast})\big\}\subset H_{0}^{1}(\omega)$. By the definition of the $n$-width and \cref{lem:3-6}, we see that
\begin{equation}\label{eq:3-44}
\begin{array}{lll}
{\displaystyle d_{n}(\omega,\omega^{\ast})\leq \sup_{u\in H_{a,\varepsilon}(\omega^{\ast})}\inf_{\varphi \in Q(n)} \frac{\Vert \chi u-\varphi\Vert_{a,\varepsilon,\omega}}{\Vert u \Vert_{a,\varepsilon,\omega^{\ast}}} \leq \varepsilon a_{\rm max}^{1/2} \Vert \nabla\chi\Vert_{L^{\infty}(\omega)} \xi^{N}.}
\end{array}
\end{equation}
Let $\Theta = 2C\mm{C_{d}}\big(a_{\rm max}/a_{\rm min}\big)^{1/2}|\omega^{\ast}|^{1/d}/\delta^{\ast}$, where $C=\max\{C_{1},C_{2}\}$ with $C_{1}$ and $C_{2}$ given by \cref{lem:3-3-0}, and define
\begin{equation}\label{eq:3-44-0}
\Lambda= 2(4e\Theta)^{d},\; b= \big(2e\Theta+1/2\big)^{-d/(d+1)}.
\end{equation} 
Following the same lines of the proof of \cite[theorem 3.5]{ma2022novel} by choosing $m$ such that
\begin{equation}\label{eq:3-44-1}
\big(e\Theta(N+1)^{2}/N\big)^{d}\leq m < \big(1+e\Theta(N+1)^{2}/N\big)^{d},
\end{equation}
it can be shown that for any $n=Nm>\Lambda$, 
\begin{equation}\label{eq:3-45}
\xi^{N}\leq e^{-bn^{{1}/{(d+1)}}}.
\end{equation}
Inserting \cref{eq:3-45} into \cref{eq:3-44} yields \cref{eq:3-16-0}, and the proof of \cref{thm:3-2} is complete.
\end{proof}


\subsection{Global approximation error estimates} In this subsection, we collect the local approximation error estimates proved in the preceding subsection to derive global error bounds for the method. We will distinguish two cases: (i) only the local particular functions are used for the local approximations; (ii) the local approximation spaces $S_{n_i}(\omega_i)$ defined in \cref{def:3-2} are used.

In order to derive the global error estimates, we assume that the oversampling domains $\{\omega_{i}^{\ast}\}_{i=1}^{M}$ satisfy a similar pointwise overlap condition as $\{\omega_{i}\}_{i=1}^{M}$:
\begin{equation}\label{eq:3-46-0}
\exists\,\kappa^{\ast} \in \mathbb{N}\qquad \forall\,{\bm x}\in \Omega\qquad {\rm card}\{i\;|\;{\bm x}\in \omega^{\ast}_{i}\}\leq \kappa^{\ast}.
\end{equation}
For convenience, let us define the following constants: 
\begin{equation}\label{eq:3-47}
C_{\chi} = \max_{i=1,\cdots,M} \Vert \nabla\chi_{i}\Vert_{L^{\infty}(\omega_{i})},\quad \delta^{\ast}_{i} = {\rm dist}(\omega_i,\,\partial\omega_{i}^{\ast}\setminus\partial \Omega)\;\big(i=1,\cdots,M\big).
\end{equation}
The following lemma gives the global error estimates for the method when no local approximation spaces are used.
\begin{lemma}\label{lem:3-7}
Let $u_{\varepsilon}$ be the exact solution of \cref{eq:2-4} and let $u^{G}_{\varepsilon} = u^{p}$ be the GFEM approximation, where $u^{p}$ denotes the global particular function defined by \cref{eq:2-8}. Then,
\begin{equation}\label{eq:3-48}
\big\Vert u_{\varepsilon} - u^{G}_{\varepsilon} \big \Vert_{a, \varepsilon}=\big\Vert u_{\varepsilon} - u^{p} \big \Vert_{a, \varepsilon} \leq 2\varepsilon \,\sqrt{\kappa\kappa^{\ast}} a_{\rm max}^{1/2}C_{\chi} \Vert f\Vert_{L^{2}(\Omega)}.
\end{equation}
If, in addition, for each $i=1,\cdots,M$, $\gamma^{\ast}_{i} := \delta^{\ast}_{i}/(e\varepsilon a_{\rm max}^{1/2}\mm{C_{d}})>1$, then
\begin{equation}\label{eq:3-49}
\begin{array}{lll}
{\displaystyle \big\Vert u_{\varepsilon} - u^{G}_{\varepsilon} \big \Vert_{a, \varepsilon} \leq 2e\varepsilon\,\sqrt{\kappa \kappa^{\ast}}a_{\rm max}^{1/2} C_{\chi}\big(\max_{i=1,\cdots,M}e^{-\gamma^{\ast}_{i}}\big) \,\Vert f\Vert_{L^{2}(\Omega)}.}
\end{array}
\end{equation}
\end{lemma}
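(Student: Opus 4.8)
The plan is to assemble the abstract GFEM bound \cref{thm:2-0} with the two local estimates of \cref{thm:3-0}, and then to reduce everything to the a priori bound \cref{eq:2-5}. First I would observe that in the present situation no local approximation spaces are employed, i.e.\ $S_{n_i}(\omega_i)=\{0\}$ for every $i$, so by \cref{eq:2-8} the global space $S_{n}(\Omega)$ collapses to $\{0\}$ and the affine trial set $u^{p}+S_{n}(\Omega)$ reduces to the single point $u^{p}$. Consequently the best-approximation identity \cref{eq:2-10} degenerates to the equality $\Vert u_{\varepsilon}-u^{G}_{\varepsilon}\Vert_{a,\varepsilon}=\Vert u_{\varepsilon}-u^{p}\Vert_{a,\varepsilon}$ already recorded in the statement, and it only remains to bound the right-hand side.

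Next I would invoke \cref{thm:2-0} with $u=u_{\varepsilon}$ and $n_i=0$, so that the local quantity $e_i$ is precisely $\Vert \chi_i(u_{\varepsilon}-u_i^{p})\Vert_{a,\varepsilon,\omega_i}$. This gives $\Vert u_{\varepsilon}-u^{p}\Vert_{a,\varepsilon}\le(\kappa\sum_{i=1}^{M}e_i^2)^{1/2}$, reducing the task to controlling $\sum_i e_i^2$. For the first (non-exponential) bound I would insert \cref{eq:3-9-1} for each $e_i$; for the second I would instead insert \cref{eq:3-9-2} and use $e^{1-\gamma^{\ast}_i}\le e\,(\max_j e^{-\gamma^{\ast}_j})$ to pull the worst exponential factor out of the sum. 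In both cases $\Vert\nabla\chi_i\Vert_{L^{\infty}(\omega_i)}$ is replaced by $C_{\chi}$ via its definition \cref{eq:3-47}.

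The central bookkeeping step is to arrange the overlap constants so that the symmetric factor $\sqrt{\kappa\kappa^{\ast}}$ (rather than $\kappa$ alone) appears. For the first estimate I would enlarge $\Vert u_{\varepsilon}\Vert_{L^{2}(\omega_i)}$ to $\Vert u_{\varepsilon}\Vert_{L^{2}(\omega_i^{\ast})}$ using $\omega_i\subset\omega_i^{\ast}$, so that both $L^{2}$ terms live on the oversampling domains; applying $(a+b)^2\le 2(a^2+b^2)$ together with the overlap condition \cref{eq:3-46-0} for $\{\omega_i^{\ast}\}$ then yields $\sum_i e_i^2\le 2\kappa^{\ast}\varepsilon^{2}a_{\rm max}C_{\chi}^2(\Vert u_{\varepsilon}\Vert_{L^{2}(\Omega)}^2+\Vert f\Vert_{L^{2}(\Omega)}^2)$, and the analogous bound carrying the extra factor $e^{2}(\max_j e^{-\gamma^{\ast}_j})^2$ in the exponential case. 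Finally I would absorb $\Vert u_{\varepsilon}\Vert_{L^{2}(\Omega)}$ into $\Vert f\Vert_{L^{2}(\Omega)}$ via $\Vert u_{\varepsilon}\Vert_{L^{2}(\Omega)}\le\Vert u_{\varepsilon}\Vert_{a,\varepsilon}\le\Vert f\Vert_{L^{2}(\Omega)}$, the first inequality because the energy norm dominates the $L^{2}$ norm and the second being \cref{eq:2-5}; multiplying by the outer $\kappa$ and taking square roots, the two factors $2\cdot 2=4$ under the root collapse to the prefactor $2$, producing exactly \cref{eq:3-48} and \cref{eq:3-49}.

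Since the argument is a direct assembly of ingredients already established, I do not expect a genuine obstacle; the only point needing care is the constant tracking described above, in particular the enlargement of $\omega_i$ to $\omega_i^{\ast}$ that is what forces the balanced constant $\sqrt{\kappa\kappa^{\ast}}$ rather than an asymmetric $\kappa$ or $\kappa^{\ast}$.
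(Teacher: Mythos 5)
Your proposal is correct and follows essentially the same route as the paper's proof: invoke \cref{thm:2-0} with $e_i$ given by \cref{eq:3-9-1} (resp.\ \cref{eq:3-9-2}), enlarge the $L^{2}$ terms to the oversampling domains, apply $(a+b)^2\le 2(a^2+b^2)$ together with the overlap condition \cref{eq:3-46-0}, and absorb $\Vert u_{\varepsilon}\Vert_{L^{2}(\Omega)}$ via \cref{eq:2-5}. Your constant bookkeeping (the factor $4$ under the root becoming the prefactor $2$, and the source of $\sqrt{\kappa\kappa^{\ast}}$) matches the paper exactly.
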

\begin{proof}
Combining \cref{thm:2-0} and \cref{eq:3-9-1}, we see that
\begin{equation}
\begin{array}{lll}
{\displaystyle \big\Vert u_{\varepsilon} - u^{p} \big \Vert^{2}_{a,\varepsilon} \leq \varepsilon^{2}\kappa\, {a_{\rm max}} \sum_{i=1}^{M} \Vert \nabla\chi_{i}\Vert^{2}_{L^{\infty}(\omega_{i})} \big(\Vert u_{\varepsilon}\Vert_{L^{2}(\omega^{\ast}_{i})} + \Vert f\Vert_{L^{2}(\omega^{\ast}_{i})}\big)^{2} }\\[4mm]
{\displaystyle \qquad \qquad \qquad \;\leq 2\varepsilon^{2}C_{\chi}^{2} \kappa\, {a_{\rm max}} \sum_{i=1}^{M} \big(\Vert u_{\varepsilon}\Vert^{2}_{L^{2}(\omega_{i})} + \Vert f\Vert^{2}_{L^{2}(\omega_{i})}\big).}
\end{array}
\end{equation}
Using the pointwise overlap condition \cref{eq:3-46-0} and the estimate \cref{eq:2-5}, we further have
\begin{equation}
\begin{array}{lll}
{\displaystyle \big\Vert u_{\varepsilon} - u^{p} \big \Vert^{2}_{a,\varepsilon} \leq 2\varepsilon^{2} C_{\chi}^{2} \kappa\kappa^{\ast}{a_{\rm max}} \big(\Vert u_{\varepsilon}\Vert^{2}_{L^{2}(\Omega)} + \Vert f\Vert^{2}_{L^{2}(\Omega)}\big) }\\[3mm]
{\displaystyle \qquad \qquad \qquad \,\leq 4\varepsilon^{2} C_{\chi}^{2} \kappa\kappa^{\ast}{a_{\rm max}}\Vert f\Vert^{2}_{L^{2}(\Omega)},}
\end{array}
\end{equation}
which yields \cref{eq:3-48}. The estimate \cref{eq:3-49} can be proved similarly by using \cref{eq:3-9-2}.
\end{proof}

When the local approximation spaces defined in \cref{def:3-2} are used, we can apply \cref{thm:3-2} and a similar argument as above to prove
\begin{lemma}\label{lem:3-8}
Let $u_{\varepsilon}$ be the exact solution of \cref{eq:2-4} and let $u^{G}_{\varepsilon}$ be the GFEM approximation. In addition, let $S_{n_{i}}(\omega_{i})$ be defined in \cref{def:3-2}, and let $\delta_{i}^{\ast}>0$. If for each $i=1,\cdots,M$, $n_{i}>\Lambda_{i}$, then
\begin{equation}
\big\Vert u_{\varepsilon} - u^{G}_{\varepsilon} \big \Vert_{a,\varepsilon} \leq 2\varepsilon \,\sqrt{\kappa\kappa^{\ast}} a_{\rm max}^{1/2}C_{\chi}  \big(\max_{i=1,\cdots,M} e^{-b_{i}n_{i}^{{1}/{(d+1)}}}\big) \Vert f\Vert_{L^{2}(\Omega)},
\end{equation}
where $\Lambda_{i}$ and $b_{i}$ are positive constants.
\end{lemma}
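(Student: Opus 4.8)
The plan is to mirror the proof of \cref{lem:3-7}, replacing the particular-function error bound by the spectral $n$-width bound of \cref{thm:3-2}, and then to aggregate the local errors using the overlap conditions and the global stability estimate. First I would invoke the best-approximation property \cref{eq:2-10}, which identifies $\|u_\varepsilon - u_\varepsilon^G\|_{a,\varepsilon}$ with $\inf_{\varphi \in u^p + S_n(\Omega)}\|u_\varepsilon - \varphi\|_{a,\varepsilon}$, so it suffices to control the latter. By the first estimate in \cref{eq:2-12} of \cref{thm:2-0}, this global quantity is bounded by $(\kappa \sum_{i=1}^M e_i^2)^{1/2}$, where each $e_i$ is any upper bound for the local error $\inf_{\varphi_i \in u_i^p + S_{n_i}(\omega_i)}\|\chi_i(u_\varepsilon - \varphi_i)\|_{a,\varepsilon,\omega_i}$. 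Thus the task reduces to producing a clean, $\varepsilon$-explicit bound for each $e_i$ and summing.

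Next, for each $i$ with $n_i > \Lambda_i$, I would combine \cref{thm:3-1,thm:3-2}: the former gives $e_i \le d_{n_i}(\omega_i,\omega_i^\ast)\,\|u_\varepsilon|_{\omega_i^\ast} - \psi_{\varepsilon,i}\|_{a,\varepsilon,\omega_i^\ast}$, and the latter bounds $d_{n_i}(\omega_i,\omega_i^\ast) \le \varepsilon a_{\rm max}^{1/2}\|\nabla\chi_i\|_{L^\infty(\omega_i)}\,e^{-b_i n_i^{1/(d+1)}}$. It then remains to estimate the residual $\|u_\varepsilon|_{\omega_i^\ast} - \psi_{\varepsilon,i}\|_{a,\varepsilon,\omega_i^\ast}$. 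By the triangle inequality it is at most $\|u_\varepsilon\|_{a,\varepsilon,\omega_i^\ast} + \|\psi_{\varepsilon,i}\|_{a,\varepsilon,\omega_i^\ast}$, and testing the local problem \cref{eq:3-4} with $v = \psi_{\varepsilon,i}$ followed by Cauchy--Schwarz yields the local stability bound $\|\psi_{\varepsilon,i}\|_{a,\varepsilon,\omega_i^\ast} \le \|f\|_{L^2(\omega_i^\ast)}$, the exact analogue of \cref{eq:2-5}. Hence $e_i \le \varepsilon a_{\rm max}^{1/2} C_\chi\, (\max_j e^{-b_j n_j^{1/(d+1)}})\,(\|u_\varepsilon\|_{a,\varepsilon,\omega_i^\ast} + \|f\|_{L^2(\omega_i^\ast)})$.

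Finally I would square, sum over $i$, apply $(a+b)^2 \le 2(a^2+b^2)$, and use the oversampling overlap condition \cref{eq:3-46-0} to write $\sum_i \|u_\varepsilon\|_{a,\varepsilon,\omega_i^\ast}^2 \le \kappa^\ast\|u_\varepsilon\|_{a,\varepsilon}^2$ and $\sum_i \|f\|_{L^2(\omega_i^\ast)}^2 \le \kappa^\ast\|f\|_{L^2(\Omega)}^2$; together with $\|u_\varepsilon\|_{a,\varepsilon} \le \|f\|_{L^2(\Omega)}$ from \cref{eq:2-5}, this gives $\sum_i e_i^2 \le 4\varepsilon^2 a_{\rm max} C_\chi^2 \kappa^\ast (\max_i e^{-b_i n_i^{1/(d+1)}})^2 \|f\|_{L^2(\Omega)}^2$. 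Inserting this into $(\kappa\sum_i e_i^2)^{1/2}$ collapses the constants to exactly $2\varepsilon\sqrt{\kappa\kappa^\ast}\,a_{\rm max}^{1/2} C_\chi (\max_i e^{-b_i n_i^{1/(d+1)}})\|f\|_{L^2(\Omega)}$, which is the claim.

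I do not anticipate a genuine obstacle, since the argument is structurally identical to that of \cref{lem:3-7}; the only point needing care is the constant bookkeeping---checking that the two uses of the factor-$2$ inequality and the two overlap sums combine into precisely $2\sqrt{\kappa\kappa^\ast}$ rather than a larger constant---and confirming that the local stability estimate for $\psi_{\varepsilon,i}$ holds verbatim on the oversampling domain $\omega_i^\ast$.
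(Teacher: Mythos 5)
Your proposal is correct and follows essentially the same route the paper intends: the paper leaves this proof as a remark ("apply \cref{thm:3-2} and a similar argument as above"), i.e., the argument of \cref{lem:3-7} with the particular-function bound \cref{eq:3-9-1} replaced by the combination of \cref{thm:3-1,thm:3-2}, which is precisely what you do. Your constant bookkeeping (the two factors of $2$ from $(a+b)^2\le 2(a^2+b^2)$ and from $\Vert u_{\varepsilon}\Vert_{a,\varepsilon}\leq \Vert f\Vert_{L^{2}(\Omega)}$, combined with the overlap constants $\kappa$ and $\kappa^{\ast}$) reproduces the stated bound exactly.
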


It is important to note that the standard $H^{1}(\Omega)$-norm of $u_{\varepsilon}-u_{\varepsilon}^{G}$, as implied by \cref{lem:3-7,lem:3-8}, can be bounded by a constant independent of $\varepsilon$.

\section{Discrete MS-GFEM}\label{sec:4}
In this section, in the same spirit as the continuous MS-GFEM, local particular functions and local approximation spaces are constructed at the discrete level to approximate the fine-scale FE solution $u_{h,\varepsilon}$ locally, giving rise to the discrete MS-GFEM. The focus of this section will be on the analysis of the local approximation errors. For convenience, we assume that all the subdomains $\{\omega_{i}\}_{i=1}^{M}$ and the oversampling domains $\{\omega^{\ast}_{i}\}_{i=1}^{M}$ are resolved by the mesh.


To start with, we recall that $V_{h}\subset H^{1}(\Omega)$ is the standard FE space of continuous piecewise polynomials defined on the mesh $\tau_{h}$, and define the following local FE spaces on the oversampling domains $\omega_{i}^{\ast}$:
\begin{equation}\label{eq:4-2}
\begin{array}{lll}
{\displaystyle  {V}_{h}(\omega^{\ast}_{i}) = \big\{v_{h}|_{\omega^{\ast}_{i}}\;:\; v_{h}\in V_{h}\big\},}\\[2mm]
{\displaystyle {V}_{h,\Gamma}(\omega^{\ast}_{i}) = \big\{v_{h}\in V_{h}(\omega_{i}^{\ast}):\; v_{h} = 0 \;\;{\rm on}\;\, \partial \omega^{\ast}_{i} \cap \Gamma\big\},  }\\[2mm]
{\displaystyle V_{h,0}(\omega^{\ast}_i)= \big\{v_{h}\in {V}_{h}(\omega^{\ast}_{i}):\;v_{h} = 0 \;\;{\rm on}\;\, \partial \omega^{\ast}_{i}\big\}, }\\[2mm]
{\displaystyle V_{h,a,\varepsilon}(\omega^{\ast}_i)= \big\{u_{h}\in V_{h,\Gamma}(\omega^{\ast}_i)\;:\; {a}_{\varepsilon,\omega^{\ast}_{i}}(u_{h},v_{h}) = 0\quad \forall v_{h}\in  V_{h,0}(\omega^{\ast}_i)\big\}.}
\end{array}
\end{equation}
Here \mm{the} $V_{h,a,\varepsilon}(\omega^{\ast}_i)$ are referred to as \emph{discrete generalized harmonic spaces}. Note that $V_{h,a,\varepsilon}(\omega^{\ast}_i)$ are non-conforming approximations of $H_{a,\varepsilon}(\omega^{\ast}_i)$, i.e., $V_{h,a,\varepsilon}(\omega^{\ast}_i) \nsubseteq H_{a,\varepsilon}(\omega^{\ast}_i)$. This makes the analysis of the discrete MS-GFEM much more involved than its continuous counterpart.

In parallel with the presentation in \Cref{sec:3}, we introduce the following discrete local problem: Find $\psi_{h,\varepsilon,i}\in {V}_{h,\Gamma}(\omega^{\ast}_{i})$ such that
\begin{equation}\label{eq:4-3}
{a}_{\varepsilon,\,\omega^{\ast}_{i}}(\psi_{h,\varepsilon,i}, v_{h}) = F_{\omega_{i}^{\ast}}(v_{h})\qquad \forall v_{h}\in {V}_{h,\Gamma}(\omega^{\ast}_{i}).
\end{equation}
Similar to the continuous case, it is easy to see that $u_{h,\varepsilon}|_{\omega^{\ast}_i} -  \psi_{h,\varepsilon,i}\in V_{h,a,\varepsilon}(\omega^{\ast}_i)$, where $u_{h,\varepsilon}$ is the solution of the fine-scale FE problem \cref{eq:2-6}. To find the desired discrete local approximation space, we proceed in a similar way as before by first defining the operator $P_{h,i}: V_{h,a,\varepsilon}(\omega^{\ast}_i)\rightarrow V_{h,0}(\omega_i)$ such that
\begin{equation}\label{eq:4-4}
P_{h,i}v_{h} = I_{h}(\chi_{i}v_{h}),
\end{equation}
where $I_{h}:C(\overline{\Omega})\rightarrow V_{h}$ denotes the standard Lagrange interpolation operator, and then considering the associated Kolmogrov $n$-width:
\begin{equation}\label{eq:4-5}
d_{h,n}(\omega_{i},\omega_{i}^{\ast}):=\inf_{Q(n)\subset V_{h,0}(\omega_i)}\sup_{u_{h}\in V_{h,a,\varepsilon}(\omega^{\ast}_i)} \inf_{v_{h}\in Q(n)}\frac {\Vert P_{h,i}u_{h}-v_{h}\Vert_{a,\varepsilon,\omega_i}}{\Vert u_{h} \Vert_{a,\varepsilon,\omega_{i}^{\ast}}}.
\end{equation}

Since $V_{h,a,\varepsilon}(\omega^{\ast}_i)$ and $V_{h,0}(\omega_i)$ are finite-dimensional spaces, it is clear that $P_{h,i}$ is compact. Similar to \cref{lem:3-2}, we have the following characterization of $d_{h,n}(\omega_{i},\omega_{i}^{\ast})$. The proof is identical to that of \cref{lem:3-2} and thus is omitted here.
\begin{lemma}\label{lem:4-1}
For each $k\in \mathbb{N}$, let $(\lambda_{h,k},\,\phi_{h,k})$ be the $k$-th eigenpair (in decreasing order) of the problem
\begin{equation}\label{eq:4-6}
{a}_{\varepsilon,\omega_{i}}\big(I_{h}(\chi_{i}\phi_{h}), I_{h}(\chi_{i} v_{h})\big) = \lambda_{h}\,{a}_{\varepsilon,\omega^{\ast}_{i}}(\phi_{h}, v_{h})\quad \forall v_{h}\in V_{h,a,\varepsilon}(\omega^{\ast}_i).
\end{equation}
Then, $d_{h,n}(\omega_{i},\omega_{i}^{\ast}) = \lambda^{1/2}_{h,n+1}$, and the optimal approximation space is given by
\begin{equation}\label{eq:4-7}
\widehat{Q}(n) = {\rm span}\big\{I_{h}(\chi_{i}\phi_{h,1}),\cdots,I_{h}(\chi_{i}\phi_{h,n})\big\}.
\end{equation}
\end{lemma}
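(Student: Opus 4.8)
The plan is to follow, essentially verbatim, the abstract $n$-width argument already carried out for \cref{lem:3-2}, the one simplification being that here all spaces are finite-dimensional, so that compactness of the relevant operator is automatic and no Rellich/Caccioppoli input is needed. First I would equip the domain space $V_{h,a,\varepsilon}(\omega^{\ast}_i)$ with the inner product $a_{\varepsilon,\omega^{\ast}_{i}}(\cdot,\cdot)$ and the range space $V_{h,0}(\omega_i)$ with $a_{\varepsilon,\omega_{i}}(\cdot,\cdot)$; by the coercivity in \cref{eq:2-2} together with the positive reaction term, these are genuine inner products, turning both spaces into finite-dimensional Hilbert spaces, and the operator $P_{h,i}$ of \cref{eq:4-4} is a bounded, hence compact, linear map between them.

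Next I would introduce the adjoint $P_{h,i}^{\ast}:V_{h,0}(\omega_i)\rightarrow V_{h,a,\varepsilon}(\omega^{\ast}_i)$ of $P_{h,i}$ with respect to these inner products, characterized by
\[
a_{\varepsilon,\omega_{i}}(P_{h,i}u_h,\,w_h)=a_{\varepsilon,\omega^{\ast}_{i}}(u_h,\,P_{h,i}^{\ast}w_h)\qquad\forall\,u_h\in V_{h,a,\varepsilon}(\omega^{\ast}_i),\ w_h\in V_{h,0}(\omega_i),
\]
and consider the self-adjoint, positive semidefinite operator $P_{h,i}^{\ast}P_{h,i}$ on $V_{h,a,\varepsilon}(\omega^{\ast}_i)$. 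The one step that actually requires checking is that the operator eigenvalue problem $P_{h,i}^{\ast}P_{h,i}\phi_h=\lambda_h\phi_h$ coincides with the variational problem \cref{eq:4-6}: testing this identity against an arbitrary $v_h\in V_{h,a,\varepsilon}(\omega^{\ast}_i)$ in the $a_{\varepsilon,\omega^{\ast}_{i}}$-inner product and using the definition of the adjoint yields $a_{\varepsilon,\omega_{i}}(P_{h,i}\phi_h,P_{h,i}v_h)=\lambda_h\,a_{\varepsilon,\omega^{\ast}_{i}}(\phi_h,v_h)$, which, upon recalling $P_{h,i}w_h=I_{h}(\chi_i w_h)$, is precisely \cref{eq:4-6}.

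Finally I would invoke the characterization of Kolmogorov $n$-widths of compact operators between Hilbert spaces \cite[Theorem 2.5, Chapter 4]{pinkus1985n}: the width $d_{h,n}(\omega_{i},\omega_{i}^{\ast})$ of \cref{eq:4-5} equals the $(n+1)$-th singular value of $P_{h,i}$, namely $\lambda^{1/2}_{h,n+1}$, and an optimal approximation space is the span of the images under $P_{h,i}$ of the first $n$ eigenfunctions $\phi_{h,1},\dots,\phi_{h,n}$. Since $P_{h,i}\phi_{h,k}=I_{h}(\chi_i\phi_{h,k})$, this is exactly $\widehat{Q}(n)$ in \cref{eq:4-7}, completing the argument.

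I do not anticipate any genuine obstacle, which is why the authors declare the proof identical to that of \cref{lem:3-2}. The only point worth flagging is the non-conformity $V_{h,a,\varepsilon}(\omega^{\ast}_i)\nsubseteq H_{a,\varepsilon}(\omega^{\ast}_i)$ noted after \cref{eq:4-2}; however, it plays no role in this abstract argument, since the $n$-width characterization depends solely on the Hilbert-space structure of the two finite-dimensional spaces and the compactness of $P_{h,i}$, both of which hold irrespective of conformity.
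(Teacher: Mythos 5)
Your proposal is correct and follows essentially the same route as the paper: the paper omits the proof of \cref{lem:4-1}, declaring it identical to that of \cref{lem:3-2}, which is precisely the adjoint/singular-value argument you give (form $P_{h,i}^{\ast}P_{h,i}$, identify its variational eigenproblem with \cref{eq:4-6}, and invoke the Hilbert-space $n$-width characterization of \cite[Theorem 2.5, Chapter 4]{pinkus1985n}). Your added observations---that compactness is automatic in finite dimensions and that the non-conformity $V_{h,a,\varepsilon}(\omega^{\ast}_i)\nsubseteq H_{a,\varepsilon}(\omega^{\ast}_i)$ is irrelevant to this abstract argument---are both accurate.
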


Now we can define the local particular functions and the local approximation spaces for the discrete MS-GFEM.
\begin{theorem}\label{thm:4-1}
On each $\omega_{i}$, let the discrete local particular function and the discrete local approximation space be defined as
\begin{equation}\label{eq:4-8}
u_{h,i}^{p} = \psi_{h,\varepsilon,i}|_{\omega_{i}},\quad S_{h,n_i}(\omega_i)={\rm span}\big\{\phi_{h,1}|_{\omega_i},\cdots,\phi_{h,n_i}|_{\omega_i}\big\},
\end{equation} 
where $\psi_{h,\varepsilon,i}$ is the solution of \cref{eq:4-3} and $\phi_{h,k}$ denotes the $k$-th eigenfunction of problem \cref{eq:4-6}. Then, the fine-scale FE solution $u_{h,\varepsilon}$ can be approximated locally as
\begin{equation}\label{eq:4-9}
\inf_{\varphi_{h} \in u_{h,i}^{p}+S_{h,n_{i}}(\omega_i)} \Vert I_{h}\big(\chi_{i}(u_{h,\varepsilon}-\varphi_{h})\big)\Vert_{a,\varepsilon,\omega_{i}}\leq d_{h,n_{i}}(\omega_{i},\omega_{i}^{\ast})\,\Vert u_{h,\varepsilon} - \psi_{h,\varepsilon,i} \Vert_{a, \varepsilon,\omega_{i}^{\ast}}.
\end{equation}
\end{theorem}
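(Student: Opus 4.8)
The plan is to transcribe the proof of the continuous counterpart \cref{thm:3-1} into the discrete setting, reducing the local approximation error to the discrete Kolmogorov $n$-width $d_{h,n_{i}}(\omega_{i},\omega_{i}^{\ast})$ through the optimality of the space $\widehat{Q}(n_{i})$ furnished by \cref{lem:4-1}. First I would set $w_{h}:=u_{h,\varepsilon}|_{\omega_{i}^{\ast}}-\psi_{h,\varepsilon,i}$ and verify that $w_{h}\in V_{h,a,\varepsilon}(\omega_{i}^{\ast})$: choosing $v_{h}\in V_{h,0}(\omega_{i}^{\ast})$ (extended by zero to a member of $V_{h,0}(\Omega)$) as a test function in \cref{eq:2-6} gives $a_{\varepsilon,\omega_{i}^{\ast}}(u_{h,\varepsilon}|_{\omega_{i}^{\ast}},v_{h})=F_{\omega_{i}^{\ast}}(v_{h})$, which upon subtracting \cref{eq:4-3} yields $a_{\varepsilon,\omega_{i}^{\ast}}(w_{h},v_{h})=0$ for all such $v_{h}$. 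This is the exact discrete analogue of the key fact $u_{\varepsilon}|_{\omega_{i}^{\ast}}-\psi_{\varepsilon,i}\in H_{a,\varepsilon}(\omega_{i}^{\ast})$ underlying \cref{thm:3-1}.

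The second step is a bookkeeping identity. Writing a generic element of $u_{h,i}^{p}+S_{h,n_{i}}(\omega_i)$ as $\varphi_{h}=\psi_{h,\varepsilon,i}|_{\omega_{i}}+\sum_{k=1}^{n_{i}}c_{k}\,\phi_{h,k}|_{\omega_{i}}$ and invoking the linearity of $I_{h}$ together with the definition \cref{eq:4-4} of $P_{h,i}$, I would obtain
\[
I_{h}\big(\chi_{i}(u_{h,\varepsilon}-\varphi_{h})\big)=P_{h,i}w_{h}-\sum_{k=1}^{n_{i}}c_{k}\,P_{h,i}\phi_{h,k}.
\]
As $\varphi_{h}$ ranges over $u_{h,i}^{p}+S_{h,n_{i}}(\omega_i)$, the subtracted sum ranges over all of $\widehat{Q}(n_{i})={\rm span}\{P_{h,i}\phi_{h,1},\dots,P_{h,i}\phi_{h,n_{i}}\}$, the optimal approximation space identified in \cref{lem:4-1}. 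Hence the infimum on the left-hand side of \cref{eq:4-9} equals $\inf_{v\in\widehat{Q}(n_{i})}\Vert P_{h,i}w_{h}-v\Vert_{a,\varepsilon,\omega_{i}}$.

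Finally, since \cref{lem:4-1} asserts that $\widehat{Q}(n_{i})$ realizes the $n$-width \cref{eq:4-5}, applying the defining supremum bound to the particular element $w_{h}\in V_{h,a,\varepsilon}(\omega_{i}^{\ast})$ gives $\inf_{v\in\widehat{Q}(n_{i})}\Vert P_{h,i}w_{h}-v\Vert_{a,\varepsilon,\omega_{i}}\le d_{h,n_{i}}(\omega_{i},\omega_{i}^{\ast})\,\Vert w_{h}\Vert_{a,\varepsilon,\omega_{i}^{\ast}}$, and substituting $\Vert w_{h}\Vert_{a,\varepsilon,\omega_{i}^{\ast}}=\Vert u_{h,\varepsilon}-\psi_{h,\varepsilon,i}\Vert_{a,\varepsilon,\omega_{i}^{\ast}}$ delivers \cref{eq:4-9}. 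I expect no genuine obstacle here: the argument is a verbatim discrete transcription of \cref{thm:3-1}, and the only points requiring care are that $I_{h}$ commutes through the finite linear combination and that the image $P_{h,i}(S_{h,n_{i}}(\omega_i))$ coincides exactly with $\widehat{Q}(n_{i})$, both immediate from the definitions in \cref{eq:4-4,eq:4-7,eq:4-8}.
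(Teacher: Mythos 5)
Your proposal is correct and follows exactly the paper's own (much terser) argument: establish that $u_{h,\varepsilon}|_{\omega_{i}^{\ast}}-\psi_{h,\varepsilon,i}\in V_{h,a,\varepsilon}(\omega_{i}^{\ast})$ and then invoke \cref{lem:4-1} together with the definition of the $n$-width \cref{eq:4-5}. The extra details you supply—the zero-extension argument for the membership in the discrete generalized harmonic space and the bookkeeping identity showing that $P_{h,i}$ maps the affine trial set onto $P_{h,i}w_{h}-\widehat{Q}(n_{i})$—are precisely what the paper leaves implicit, and they check out.
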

\begin{proof}
Similar to the continuous case, we have $u_{h,\varepsilon}|_{\omega_i^{\ast}} - \psi_{h,\varepsilon,i} \in V_{h,a,\varepsilon}(\omega^{\ast}_i)$. Hence, the estimate \cref{eq:4-9} follows from \cref{lem:4-1} and the definition of the $n$-width.
\end{proof}

\begin{rem}
Compared with those at the continuous level, the local eigenproblems at the discrete level are defined in a slightly different way, with the Lagrange interpolation operator $I_{h}$ incorporated. This modification enables us to get the desired discrete local approximation error estimates \cref{eq:4-9} involving $I_{h}$.
\end{rem}

Exponential error bounds for the local approximations of the discrete MS-GFEM are established in the next subsection. 

\subsection{Local approximation error estimates}
In this subsection, we shall derive upper bounds for the local approximation errors of the discrete MS-GFEM. It is important to note that although the local approximations of MS-GFEM in the continuous and discrete settings are constructed in the same spirit and are similar in form, the local error analysis in the discrete setting is typically more complicated. This complication becomes much greater for singularly perturbed problems, and special care is needed to deal with the interplay among the singular perturbation parameter, the fine-scale FE mesh size, and the oversampling size. In particular, we will distinguish two cases: $h\leq \varepsilon$ and $h\geq \varepsilon$. As in \cref{sec:3-2}, the subscript $i$ is dropped for ease of notation and we denote by $\delta^{\ast}={\rm dist}\,\big(\omega, \, \partial \omega^{\ast}\setminus\partial \Omega\big)$. 

To begin with, we state the main results of this subsection. Let us first consider the case when only the local particular functions are used for the local approximations.
\begin{theorem}\label{thm:4-2}
Let $u_{h,\varepsilon}$ be the solution of the fine-scale FE problem \cref{eq:2-6}, and let $\psi_{h,\varepsilon}$ be the local particular function defined in \cref{eq:4-3}. There exist positive constants $c_{0}$, $c_{1}$, and $C$ independent of $\varepsilon$ and $h$, such that if $\delta^{\ast}\geq c_{0} \max\{\varepsilon,h\}$,
\begin{equation}\label{eq:4-12-0}
\begin{array}{lll}
{\displaystyle \big\Vert I_{h}\big(\chi (u_{h,\varepsilon} - \psi_{h,\varepsilon})\big)\big\Vert_{a,\varepsilon,\omega}\,\big/ \,\big(\Vert u_{h,\varepsilon} \Vert_{L^{2}(\omega^{\ast})} + \Vert \psi_{h,\varepsilon} \Vert_{L^{2}(\omega^{\ast})}\big) }\\[3mm]
{\displaystyle \leq 
\begin{cases}
C\big(1+\varepsilon \Vert\nabla \chi\Vert_{L^{\infty}(\omega)}\big) (\varepsilon/\delta^{\ast})\,e^{-c_{1}\delta^{\ast}/\varepsilon},\;\quad\quad {\rm if}\;\, h\leq \varepsilon,\\[2mm]

C\big(1+\varepsilon \Vert\nabla \chi\Vert_{L^{\infty}(\omega)}\big) (h/\delta^{\ast})^{1/2}\,e^{-c_{1}\delta^{\ast}/h},\quad {\rm if}\;\, \varepsilon\leq h.
\end{cases}
}
\end{array}
\end{equation}
\end{theorem}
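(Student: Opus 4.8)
The plan is to reduce the estimate to a \emph{discrete} Caccioppoli-type inequality for discrete generalized harmonic functions, and then to iterate it over a family of nested domains exactly as in the continuous proof of \cref{thm:3-0} (cf. \cref{lem:3-4}). Set $w_h := u_{h,\varepsilon}|_{\omega^\ast} - \psi_{h,\varepsilon} \in V_{h,a,\varepsilon}(\omega^\ast)$, which is discrete generalized harmonic, and note that discrete harmonicity restricts to any mesh-resolved subdomain $D \subset \omega^\ast$, since test functions in $V_{h,0}(D)$ extend by zero into $V_{h,0}(\omega^\ast)$. It then suffices to bound $\|I_h(\chi w_h)\|_{a,\varepsilon,\omega}$ by the claimed quantity times $\|w_h\|_{L^2(\omega^\ast)}$, after which the triangle inequality $\|w_h\|_{L^2(\omega^\ast)} \le \|u_{h,\varepsilon}\|_{L^2(\omega^\ast)} + \|\psi_{h,\varepsilon}\|_{L^2(\omega^\ast)}$ supplies the denominator.

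The core of the argument is the discrete analogue of \cref{eq:3-6}. For a cutoff $\eta$ vanishing on $\partial D^\ast \cap \Omega$ and transitioning over a collar of width $d$ (so $\|\nabla\eta\|_{L^\infty}\sim 1/d$), the function $I_h(\eta^2 w_h)$ lies in $V_{h,0}(D^\ast)$, so testing discrete harmonicity against it gives $a_{\varepsilon,D^\ast}(w_h, I_h(\eta^2 w_h)) = 0$. Expanding $\|I_h(\eta w_h)\|_{a,\varepsilon,D^\ast}^2$ and comparing with the \emph{exact} continuous identity of \cref{lem:3-1} yields
\[
\|I_h(\eta w_h)\|_{a,\varepsilon,D^\ast}^2 = \varepsilon^2\!\int_{D^\ast}\!(A\nabla\eta\cdot\nabla\eta)\,w_h^2\,d{\bm x} + a_{\varepsilon,D^\ast}\big(w_h,\,\eta^2 w_h - I_h(\eta^2 w_h)\big) + a_{\varepsilon,D^\ast}\big(I_h(\eta w_h)-\eta w_h,\,I_h(\eta w_h)+\eta w_h\big).
\]
The first term is the continuous Caccioppoli contribution, bounded by $\varepsilon^2 a_{\max}d^{-2}\|w_h\|_{L^2(D^\ast)}^2$; the remaining two are interpolation-error remainders, which I would estimate elementwise using standard product-interpolation bounds for $I_h(g w_h) - g w_h$ together with inverse inequalities, producing contributions scaling like powers of $h/d$. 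Collecting terms gives a single-step bound of the schematic form $\|w_h\|_{L^2(D)} \le \|I_h(\eta w_h)\|_{a,\varepsilon,D^\ast} \le C\,(\max\{\varepsilon,h\}/d)\,\|w_h\|_{L^2(D^\ast)}$, where the $\varepsilon/d$ piece comes from the continuous term and the $h/d$ pieces from the interpolation errors, so that the per-step contraction is governed by $\max\{\varepsilon,h\}$.

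With this single-step estimate, I would chain it over $N$ nested domains $\omega = \omega^{N+1}\subset\cdots\subset\omega^1 = \omega^\ast$ with step width $d = \delta^\ast/N$, exactly as in \cref{lem:3-4}, obtaining $\|w_h\|_{L^2(\omega)} \le (C N\max\{\varepsilon,h\}/\delta^\ast)^N\|w_h\|_{L^2(\omega^\ast)}$, and then optimizing $N\sim \delta^\ast/(e\,\max\{\varepsilon,h\})$ to convert the product into $e^{-c_1\delta^\ast/\max\{\varepsilon,h\}}$. This single factor splits into the two advertised cases $e^{-c_1\delta^\ast/\varepsilon}$ (for $h\le\varepsilon$) and $e^{-c_1\delta^\ast/h}$ (for $\varepsilon\le h$). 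A final application of the discrete Caccioppoli inequality with $\eta=\chi$ converts the $L^2$ decay into the energy-norm bound for $\|I_h(\chi w_h)\|_{a,\varepsilon,\omega}$, with the factor $(1+\varepsilon\|\nabla\chi\|_{L^\infty(\omega)})$ arising from the continuous term (the $\varepsilon\|\nabla\chi\|$ piece) together with the interpolation remainders (the $1$ piece), and one leftover step factor supplying the prefactor $\varepsilon/\delta^\ast$ or $(h/\delta^\ast)^{1/2}$.

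The main obstacle is making the discrete Caccioppoli inequality sharp enough in \emph{both} regimes. The naive route of bounding $\|\nabla w_h\|$ by the inverse inequality $\|\nabla w_h\|_{L^2}\lesssim h^{-1}\|w_h\|_{L^2}$ is far too lossy when $h\le\varepsilon$: it replaces the per-step factor $\varepsilon/d$ by $\varepsilon^2/(dh)$ and ultimately degrades the decay to the incorrect $e^{-c\delta^\ast h/\varepsilon^2}$. To recover the correct rate $e^{-c_1\delta^\ast/\varepsilon}$ in the asymptotic regime $h\le\varepsilon$, the gradient of $w_h$ must not be squandered; instead one controls $\varepsilon\|\nabla w_h\|$ directly through the Caccioppoli estimate itself, via a short bootstrap/absorption step, so that the $\varepsilon$-scale contribution is retained as $\varepsilon/d$. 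Managing this interplay — thereby showing that it is precisely $\max\{\varepsilon,h\}$, and not some worse combination, that controls the contraction, while producing the clean prefactors $\varepsilon/\delta^\ast$ versus $(h/\delta^\ast)^{1/2}$ — is the delicate, regime-dependent bookkeeping that forces the case distinction in the statement.
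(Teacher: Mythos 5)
Your proposal follows essentially the same route as the paper: the paper proves a discrete Caccioppoli inequality (Lemma \ref{lem:4-3}) by testing discrete harmonicity against $I_h(\eta^2 u_h)$ and controlling the remainder $a_{\varepsilon}\big(u_h,\eta^2u_h-I_h(\eta^2u_h)\big)$ with the superapproximation result of Lemma \ref{lem:4-2} plus inverse estimates and absorption, then iterates it over nested domains with $N\sim \delta^\ast/\max\{\varepsilon,h\}$ (Lemma \ref{lem:4-4}), and finally converts to the $\|I_h(\chi\,\cdot\,)\|_{a,\varepsilon,\omega}$ bound via the stability estimate $\|I_h(\chi u_h)\|_{a,\varepsilon}\le C(1+\varepsilon\|\nabla\chi\|_{L^\infty})\|u_h\|_{a,\varepsilon}$ (Lemma \ref{lem:4-5}) and the triangle inequality $\|\psi_{h,\varepsilon}\|_{L^2(\omega^\ast)}\le\|f\|_{L^2(\omega^\ast)}$-type bookkeeping --- exactly your three-stage plan, up to the cosmetic difference that you expand $\|I_h(\eta w_h)\|^2_{a,\varepsilon}$ directly while the paper works with $\|\eta u_h\|^2_{a,\varepsilon}$ and separates the interpolation stability into a standalone lemma.

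One quantitative slip in your schematic single-step bound: in the regime $\varepsilon\le h$ the superapproximation technique does \emph{not} deliver a per-step factor $h/d$, but only $(h/d)^{1/2}$. The reason is that the reaction-term remainder $\big(w_h,\eta^2w_h-I_h(\eta^2w_h)\big)_{L^2}\lesssim (h/d)\|w_h\|^2_{L^2}$ carries no $\varepsilon^2$ prefactor, cannot be absorbed into the gradient part of the energy norm, and bounds the \emph{square} of $\|\eta w_h\|_{a,\varepsilon}$; taking square roots yields $(h/d)^{1/2}$, which is precisely why the paper's Lemma \ref{lem:4-3} and the prefactor in \cref{eq:4-12-0} involve $(h/\delta^\ast)^{1/2}$. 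Your own final prefactor $(h/\delta^\ast)^{1/2}$ is the correct one, so this inconsistency self-corrects and does not affect the validity of the architecture: iterating the $(ChN/\delta^\ast)^{1/2}$ contraction and choosing $N\sim\delta^\ast/h$ still produces $e^{-c_1\delta^\ast/h}$, just with different constants than the linear-contraction heuristic suggests.
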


\Cref{thm:4-2} indicates that if the oversampling size $\delta^{\ast}$ is relatively large with respect to $\varepsilon$ and $h$, the local particular functions are good local approximations of the fine-scale FE solution and thus it is not necessary to build the local approximation spaces. Conversely, if relatively small oversampling domains are used, then we need to construct the local approximation spaces using the local eigenfunctions and identify the associated approximation errors. To do this, as before, we assume that $\delta^{\ast}>0$.
\begin{theorem}\label{thm:4-3}
There exist positive constants $\Lambda$, $b$, $c_{0}$, $c_{1}$, and $C$, independent of $\varepsilon$ and $h$, such that the following holds.
\begin{itemize}
\item $h\leq \varepsilon$. For any $n>\Lambda$, if $h\leq c_{0}n^{-1/(d+1)}$, then 
\begin{equation}\label{eq:4-13}
d_{n}(\omega,\omega^{\ast}) \leq C\big(1+\varepsilon \Vert\nabla \chi\Vert_{L^{\infty}(\omega)}\big)\varepsilon e^{-bn^{{1}/{(d+1)}}}.
\end{equation}
\item $h\geq \varepsilon$. Let $\sigma_{0}>\sqrt{2}$ and assume that the meshes $\{\tau_{h}\}$ are quasi-uniform. For any $n$ with $c_{1}\big(\delta^{\ast}/(\sigma_{0}^{2}h)\big)^{d+1}\leq n\leq c_{1}\big(\delta^{\ast}/(2h)\big)^{d+1}$, 
\begin{equation}\label{eq:4-14}
d_{n}(\omega,\omega^{\ast}) \leq C\big(1+\varepsilon \Vert\nabla \chi\Vert_{L^{\infty}(\omega)}\big)\,e^{-b\sigma_{0}^{-d/(d+1)}n^{{1}/{(d+1)}}}.
\end{equation}
\end{itemize}
\end{theorem}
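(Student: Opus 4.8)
The plan is to transplant the four-stage argument behind \cref{thm:3-2} — a Caccioppoli inequality, a low-dimensional $L^{2}$ approximation lying \emph{inside} the (generalized) harmonic space, an energy-norm approximation with algebraic rate, and a nested-domain iteration producing superalgebraic decay — into the discrete, non-conforming setting, and then to optimize the number of layers separately in the two regimes. Here $d_{h,n}(\omega,\omega^{\ast})$ is the $n$-width from \cref{eq:4-5}. I would use repeatedly that on $V_{h}$ the inverse inequality $\|\nabla v_{h}\|_{L^{2}}\le C_{\rm inv}h^{-1}\|v_{h}\|_{L^{2}}$ holds, so that $\|v_{h}\|_{a,\varepsilon}\le(1+C_{\rm inv}^{2}a_{\rm max}\varepsilon^{2}h^{-2})^{1/2}\|v_{h}\|_{L^{2}}$; when $\varepsilon\le h$ this renders the energy norm and the $L^{2}$ norm equivalent on the finite element space, with equivalence constant approaching $\sqrt{2}$ once $\varepsilon^{2}C_{\rm inv}^{2}a_{\rm max}\le h^{2}$ — this is the origin of the threshold $\sigma_{0}>\sqrt{2}$.

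First I would establish a discrete Caccioppoli-type inequality replacing \cref{lem:3-1}. Since $V_{h,a,\varepsilon}(\omega^{\ast})\not\subset H_{a,\varepsilon}(\omega^{\ast})$, the exact identity \cref{eq:3-5} is unavailable; instead I would test the discrete harmonicity relation defining $V_{h,a,\varepsilon}(\omega^{\ast})$ in \cref{eq:4-2} with $I_{h}(\eta^{2}u_{h})\in V_{h,0}(\omega^{\ast})$, expand $\|I_{h}(\eta u_{h})\|_{a,\varepsilon}^{2}$, and control the mismatch between the conforming products $\eta u_{h}$, $\eta^{2}u_{h}$ and their interpolants by elementwise Lagrange-interpolation estimates. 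After absorbing the resulting consistency terms with the inverse inequality, this should yield, for $u_{h}\in V_{h,a,\varepsilon}(\omega^{\ast})$, a bound of the form
\[
\|I_{h}(\eta u_{h})\|_{a,\varepsilon,\omega^{\ast}}\ \lesssim\ \max\{\varepsilon,h\}\,\|\nabla\eta\|_{L^{\infty}}\,\|u_{h}\|_{L^{2}(\omega^{\ast}\cap\,{\rm supp}\,\eta)}.
\]
The replacement of $\varepsilon$ in \cref{eq:3-6} by $\max\{\varepsilon,h\}$ is the essential new phenomenon: it switches the effective decay length from $\varepsilon$ to $h$ once $\varepsilon\le h$, which is precisely what produces the $\delta^{\ast}/h$ dependence in \cref{eq:4-14} (and in \cref{thm:4-2}).

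Next I would assemble the algebraic-rate energy approximation and the iteration. Because $V_{h,a,\varepsilon}(D^{\ast})$ is finite-dimensional, hence a closed subspace of $H^{1}(D^{\ast})$, \cref{lem:3-3} applies directly with $\mathcal{S}(D^{\ast})=V_{h,a,\varepsilon}(D^{\ast})$ and supplies an $m$-dimensional $\Psi_{m}(D^{\ast})\subset V_{h,a,\varepsilon}(D^{\ast})$ — crucially inside the discrete harmonic space — with $L^{2}$ error $\lesssim m^{-1/d}|D^{\ast}|^{1/d}\|\cdot\|_{H^{1}(D^{\ast})}$. Converting $\|u_{h}\|_{H^{1}(D^{\ast})}\lesssim\max\{\varepsilon,h\}^{-1}\|u_{h}\|_{a,\varepsilon,D^{\ast}}$ (the $h^{-1}$ bound coming again from the inverse inequality) and combining with the discrete Caccioppoli gives a discrete analog of \cref{lem:3-5} in which the factors $\max\{\varepsilon,h\}$ and $\max\{\varepsilon,h\}^{-1}$ cancel, leaving an $\varepsilon$- and $h$-independent one-step factor $\xi\sim CNm^{-1/d}|\omega^{\ast}|^{1/d}/\delta^{\ast}$. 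Iterating over $N$ nested layers of width $\delta^{\ast}/N$ exactly as in \cref{lem:3-6} then yields $d_{h,n}(\omega,\omega^{\ast})\lesssim(1+\varepsilon\|\nabla\chi\|_{L^{\infty}(\omega)})\,\xi^{N}$ with $n=Nm$, the prefactor $(1+\varepsilon\|\nabla\chi\|_{L^{\infty}(\omega)})$ arising from the final application of the discrete Caccioppoli with $\eta=\chi$.

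Finally I would optimize $N$ and $m$ in each regime. The approximation step requires each layer to be mesh-resolvable, $H\le\delta^{\ast}/N$, forcing $\delta^{\ast}/N\gtrsim h$, i.e.\ $N\lesssim\delta^{\ast}/h$. When $h\le\varepsilon$ this cap does not bind the continuous-optimal choice $N\sim n^{1/(d+1)}$ as long as $h\le c_{0}n^{-1/(d+1)}$, and repeating the balancing in the proof of \cref{thm:3-2} recovers \cref{eq:4-13}. When $\varepsilon\le h$ the cap is active: $N$ saturates near $\delta^{\ast}/h$, so $n=Nm$ is pinned to a window of size $\sim(\delta^{\ast}/h)^{d+1}$, which is precisely the constraint $c_{1}(\delta^{\ast}/(\sigma_{0}^{2}h))^{d+1}\le n\le c_{1}(\delta^{\ast}/(2h))^{d+1}$; choosing $\sigma_{0}>\sqrt{2}$ to keep $\xi<1$ despite the $\sqrt{2}$ energy–$L^{2}$ equivalence constant on $V_{h}$, and optimizing $m$ against $N$, gives \cref{eq:4-14}. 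I expect the main obstacle to be the discrete Caccioppoli of the first step: when $N\sim\delta^{\ast}/h$ the cutoff has $\|\nabla\eta\|_{L^{\infty}}\sim N/\delta^{\ast}\sim 1/h$, so the per-step interpolation/consistency error is of size $\sim hN/\delta^{\ast}=O(1)$ rather than small, and one must show it preserves the $\max\{\varepsilon,h\}\|\nabla\eta\|$ scaling and stays strictly below $1$ across all $N$ layers. This is where the quasi-uniformity of $\{\tau_{h}\}$ assumed in the case $\varepsilon\le h$ and the inverse inequality are indispensable, and it is what fixes the admissible range $\sigma_{0}>\sqrt{2}$.
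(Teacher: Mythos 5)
Your overall architecture (discrete Caccioppoli $\to$ low-dimensional $L^{2}$ approximation inside $V_{h,a,\varepsilon}$ via \cref{lem:3-3} $\to$ one-step energy estimate $\to$ nested-layer iteration, with the asymptotic case mirroring \cref{thm:3-2}) is the same as the paper's, and your treatment of the case $h\leq\varepsilon$ is essentially correct. The genuine gap is in the preasymptotic regime, and it sits exactly where you predicted the "main obstacle" would be: your claimed discrete Caccioppoli inequality $\Vert I_{h}(\eta u_{h})\Vert_{a,\varepsilon,\omega^{\ast}}\lesssim\max\{\varepsilon,h\}\Vert\nabla\eta\Vert_{L^{\infty}}\Vert u_{h}\Vert_{L^{2}}$ is not obtainable by the route you sketch, and the paper does not prove it. Testing discrete harmonicity with $I_{h}(\eta^{2}u_{h})$ leaves the consistency term $\big(u_{h},\eta^{2}u_{h}-I_{h}(\eta^{2}u_{h})\big)_{L^{2}}$, which by superapproximation (\cref{lem:4-2}) and the inverse inequality is of size $(h/\delta)\Vert u_{h}\Vert_{L^{2}}^{2}$ — first order in $h/\delta$ — and it bounds the \emph{squared} energy norm; absorbing the gradient factor $\Vert\nabla(\eta u_{h})\Vert$ into $\varepsilon^{2}\Vert\nabla(\eta u_{h})\Vert^{2}$ by Young's inequality instead would cost a factor $h^{4}/(\varepsilon^{2}\delta^{2})$ that blows up as $\varepsilon\to0$. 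Hence the provable bound in the regime $\varepsilon\leq h$ is only $\Vert u_{h}\Vert_{a,\varepsilon,D}\leq C_{0}(h/\delta)^{1/2}\Vert u_{h}\Vert_{L^{2}(D^{\ast})}$ (the paper's \cref{lem:4-3}, second case), with a square-root loss relative to your claim.

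This square root invalidates your central simplification: the "cancellation" of $\max\{\varepsilon,h\}$ against $\max\{\varepsilon,h\}^{-1}$ does not occur, and the one-step factor in the iteration is $C\,|D^{\ast}|^{1/d}m^{-1/d}(h\delta)^{-1/2}=C\,|D^{\ast}|^{1/d}m^{-1/d}\delta^{-1}(\delta/h)^{1/2}$ (the paper's \cref{lem:4-6}(ii)), carrying an extra factor $(\delta/h)^{1/2}$ that is \emph{large} unless the layer width $\delta=\delta^{\ast}/N$ is comparable to $h$. The paper's mechanism — and the true origin of everything you tried to explain via a norm-equivalence constant — is to force each layer to satisfy $2h<\delta^{\ast}/N$ (mesh resolvability required by \cref{lem:4-3}) and $(\delta^{\ast}/(Nh))^{1/2}\leq\sigma_{0}$ (to tame the square-root loss), i.e.\ condition \cref{eq:4-37}; non-emptiness of this window is precisely $\sigma_{0}^{2}>2$, which is where $\sigma_{0}>\sqrt{2}$ comes from, and the surviving per-step factor $C\sigma_{0}$ in \cref{eq:4-38} is what produces the $\sigma_{0}^{-d/(d+1)}$ in the exponent of \cref{eq:4-14}. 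Your account gives only the one-sided cap $N\lesssim\delta^{\ast}/h$; it cannot explain the \emph{lower} bound $n\geq c_{1}(\delta^{\ast}/(\sigma_{0}^{2}h))^{d+1}$, and your attribution of $\sqrt{2}$ to the energy–$L^{2}$ equivalence constant on $V_{h}$ is incorrect. (A minor further point: the prefactor $1+\varepsilon\Vert\nabla\chi\Vert_{L^{\infty}(\omega)}$ comes from the stability of $v_{h}\mapsto I_{h}(\chi v_{h})$, the paper's \cref{lem:4-5}, not from a final Caccioppoli application.)
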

\begin{rem}
In the asymptotic regime (i.e., $h\leq \varepsilon$), the local error bounds of the discrete method in \cref{thm:4-2,thm:4-3} are very similar to those of the continuous method in \cref{thm:3-0,thm:3-2}, respectively. In the preasymptotic regime (i.e., $h\geq \varepsilon$), which is of more interest in practice, the results are different and more complicated. In particular, the performance of the discrete method in this regime depends crucially on the ratio of the oversampling size $\delta^{\ast}$ to the mesh size $h$ instead of to $\varepsilon$. For example, in this regime, even if $\delta^{\ast}/\varepsilon$ is large, the (discrete) local particular functions may not locally approximate the discrete solution well, as contrasted with the continuous case.
\end{rem}

To prove \cref{thm:4-2,thm:4-3}, we need some Caccioppoli-type inequalities for discrete generalized harmonic functions as in the continuous case. These inequality, however, are much more difficult to prove than their continuous counterparts due to the spatial discretization. To prove such inequalities, we first give a preliminary superapproximation result.

\begin{lemma}[\cite{demlow2011local}]\label{lem:4-2}
Let $\eta\in C^{\infty}(\overline{\Omega})$ with $|\eta|_{W^{j,\infty}(\Omega)}\leq C\delta^{-j}$ for $0\leq j\leq r+1$. Then for each $u_{h}\in V_{h}$ and $K\in \tau_{h}$ satisfying $h_{K} \leq \delta$, 
\begin{align}
{\displaystyle \Vert \eta^{2}u_{h}- I_{h}(\eta^{2}u_{h})\Vert_{H^{1}(K)}\leq C\big(\frac{h_{K}}{\delta}\Vert \nabla (\eta u_{h})\Vert_{L^{2}(K)}+\frac{h_{K}}{\delta^{2}}\Vert u_{h}\Vert_{L^{2}(K)}\big),\label{eq:4-15}}\\
{\displaystyle \Vert \eta^{2}u_{h}- I_{h}(\eta^{2}u_{h})\Vert_{L^{2}(K)}\leq C\big(\frac{h^{2}_{K}}{\delta}\Vert \nabla(\eta u_{h})\Vert_{L^{2}(K)}+\frac{h^{2}_{K}}{\delta^{2}}\Vert u_{h}\Vert_{L^{2}(K)}\big),\label{eq:4-16}}
\end{align}
where $C$ is independent of $\delta$ and $h_{K}$.
\end{lemma}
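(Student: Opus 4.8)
The plan is to reduce \cref{eq:4-15,eq:4-16} to elementwise interpolation estimates and to exploit two structural facts: that the Lagrange interpolant reproduces polynomials of degree $r$, and that $u_h|_K$ is itself such a polynomial. Fix $K\in\tau_h$ with $h_K\le\delta$, pick any point $x_K\in K$, and set $c:=\eta^2(x_K)$. Since $c\,u_h|_K\in P_r(K)$ we have $I_h(c\,u_h)=c\,u_h$ on $K$, so that $\eta^2u_h-I_h(\eta^2u_h)=g-I_h g$ on $K$ with $g:=(\eta^2-c)u_h$. Because $r\ge 1$ and $d\le 3$, the exponent $s=2$ is admissible in the standard shape-regular interpolation error bound (and $H^2(K)\hookrightarrow C(\overline{K})$ makes $I_hg$ well defined), and I would invoke $\Vert g-I_hg\Vert_{H^1(K)}\le C h_K|g|_{H^2(K)}$ together with $\Vert g-I_hg\Vert_{L^2(K)}\le Ch_K^2|g|_{H^2(K)}$. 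The whole estimate thus reduces to controlling $|g|_{H^2(K)}$ in terms of $\Vert\eta\nabla u_h\Vert_{L^2(K)}$ and $\Vert u_h\Vert_{L^2(K)}$.

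The second step is to expand $|g|_{H^2(K)}$ by the Leibniz rule, writing $D^2 g = D^2(\eta^2)\,u_h + 2\,D(\eta^2)\!\cdot\! Du_h + (\eta^2-c)D^2u_h$. The crucial point, and the reason the result is phrased for $\eta^2$ rather than $\eta$, is that $D(\eta^2)=2\eta\,D\eta$ carries a factor of $\eta$: the middle term is therefore bounded by $C\Vert\nabla\eta\Vert_{L^\infty}\Vert\eta\nabla u_h\Vert_{L^2(K)}\le C\delta^{-1}\Vert\eta\nabla u_h\Vert_{L^2(K)}$, producing $\Vert\eta\nabla u_h\Vert$ rather than a bare gradient. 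Using the hypotheses and $\Vert\eta\Vert_{L^\infty}\le C$ one has $\Vert D^2(\eta^2)\Vert_{L^\infty}\le C\delta^{-2}$, so the first term contributes $C\delta^{-2}\Vert u_h\Vert_{L^2(K)}$.

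The main obstacle is the third term $(\eta^2-c)D^2u_h$, which a priori yields a bare $\Vert\nabla u_h\Vert_{L^2(K)}$ with no factor of $\eta$. Here I would use the mean value theorem to get $\Vert\eta^2-c\Vert_{L^\infty(K)}\le Ch_K\delta^{-1}$ and the inverse estimate $\Vert D^2u_h\Vert_{L^2(K)}\le Ch_K^{-1}\Vert\nabla u_h\Vert_{L^2(K)}$, and then reinstate the missing $\eta$ through the slow variation of $\eta$: since $\sup_K|\eta|\le\inf_K|\eta|+C\delta^{-1}h_K$, one has $\sup_K|\eta|\,\Vert\nabla u_h\Vert_{L^2(K)}\le\Vert\eta\nabla u_h\Vert_{L^2(K)}+C\delta^{-1}h_K\Vert\nabla u_h\Vert_{L^2(K)}$, and the residual is converted with a second inverse estimate $\Vert\nabla u_h\Vert_{L^2(K)}\le Ch_K^{-1}\Vert u_h\Vert_{L^2(K)}$. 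Collecting the three contributions gives $h_K|g|_{H^2(K)}\le C\big(h_K\delta^{-1}\Vert\eta\nabla u_h\Vert_{L^2(K)}+h_K\delta^{-2}\Vert u_h\Vert_{L^2(K)}\big)$, where the assumption $h_K\le\delta$ is precisely what keeps every stray power of $h_K/\delta$ bounded.

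Finally, writing $\eta\nabla u_h=\nabla(\eta u_h)-u_h\nabla\eta$ and using $\Vert\nabla\eta\Vert_{L^\infty}\le C\delta^{-1}$ to replace $\Vert\eta\nabla u_h\Vert_{L^2(K)}$ by $\Vert\nabla(\eta u_h)\Vert_{L^2(K)}+C\delta^{-1}\Vert u_h\Vert_{L^2(K)}$, and absorbing the latter into the $h_K\delta^{-2}\Vert u_h\Vert$ term, produces exactly \cref{eq:4-15}. The $L^2$ bound \cref{eq:4-16} follows from the identical estimate on $|g|_{H^2(K)}$, now carried through with the factor $h_K^2$ in place of $h_K$. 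The delicate point throughout, as noted, is the term $(\eta^2-c)D^2u_h$: ensuring the stray $\Vert\nabla u_h\Vert$ it creates is reabsorbed at the correct power of $h_K/\delta$ is exactly why the statement is formulated for $\eta^2$ and under $h_K\le\delta$.
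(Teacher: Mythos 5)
The paper offers no proof of \cref{lem:4-2} at all; it is quoted as a known superapproximation result from \cite{demlow2011local}. Your proposal therefore supplies an argument where the paper has only a citation, and the argument you give is correct and is essentially the classical superapproximation argument, organized in a slightly cleaner way: instead of expanding the full $(r+1)$-st derivative of $\eta^{2}u_{h}$ by Leibniz and using $D^{r+1}u_{h}|_{K}=0$ as in the standard references, you subtract $c=\eta^{2}(x_{K})$, use that $I_{h}$ reproduces $cu_{h}\in P_{r}(K)$, and then need only the $H^{2}$-level interpolation bound, so the Leibniz expansion has three terms independently of $r$. Both routes rest on the two structural facts you isolate: $\nabla(\eta^{2})=2\eta\nabla\eta$ keeps a factor of $\eta$, and inverse estimates trade derivatives of $u_{h}$ for powers of $h_{K}^{-1}$.

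One step must be stated more carefully, since as displayed it does not chain together. From $\Vert\eta^{2}-c\Vert_{L^{\infty}(K)}\le Ch_{K}\delta^{-1}$ and the inverse estimate you obtain $\Vert(\eta^{2}-c)D^{2}u_{h}\Vert_{L^{2}(K)}\le C\delta^{-1}\Vert\nabla u_{h}\Vert_{L^{2}(K)}$, and at that point there is no factor $\sup_{K}|\eta|$ left to ``reinstate'': your slow-variation inequality bounds $\sup_{K}|\eta|\,\Vert\nabla u_{h}\Vert_{L^{2}(K)}$, not the bare $\Vert\nabla u_{h}\Vert_{L^{2}(K)}$, and the bare term genuinely cannot be absorbed, because the inverse estimate only gives $\delta^{-1}\Vert\nabla u_{h}\Vert_{L^{2}(K)}\le C\delta^{-1}h_{K}^{-1}\Vert u_{h}\Vert_{L^{2}(K)}$, which dominates (rather than is dominated by) $\delta^{-2}\Vert u_{h}\Vert_{L^{2}(K)}$ when $h_{K}\le\delta$. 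The repair is immediate and clearly what you intend: the mean value theorem applied to $\eta^{2}$, whose gradient is $2\eta\nabla\eta$, actually yields the sharper bound $\Vert\eta^{2}-c\Vert_{L^{\infty}(K)}\le Ch_{K}\delta^{-1}\sup_{K}|\eta|$; carrying this factor, the third term is bounded by $C\delta^{-1}\sup_{K}|\eta|\,\Vert\nabla u_{h}\Vert_{L^{2}(K)}$, and your slow-variation and inverse-estimate steps then close the argument exactly as written. With that bookkeeping made explicit, the proof is complete.
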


The following lemma gives the desired discrete Caccioppoli inequalities in both asymptotic and preasymptotic regimes.
\begin{lemma}\label{lem:4-3}
Let $D\subset D^{\ast}$ be given open connected subsets of $\Omega$, and let $\delta:={\rm dist}\,\big(D, \, \partial D^{\ast}\setminus\partial \Omega\big)>0$. In addition, let $\max_{K\cap D^{\ast}\neq \emptyset}h_{K}\leq \frac{1}{2}\delta$. Then, there exists $C_{0}>0$ independent of $\varepsilon$, $h$, and $\delta$, such that for any $u_{h}\in V_{h,a,\varepsilon}(D^{\ast})$,
\begin{equation}\label{eq:4-17}
\begin{array}{lll}
{\displaystyle \Vert u_{h}\Vert_{a,\varepsilon,D}\leq C_{0}(\varepsilon/\delta)\Vert u_{h} \Vert_{L^{2}(D^{\ast})}, \qquad {\rm if}\;\, h\leq \varepsilon,}\\[3mm]
{\displaystyle \Vert u_{h}\Vert_{a,\varepsilon,D}\leq C_{0}(h/\delta)^{1/2}\Vert u_{h} \Vert_{L^{2}(D^{\ast})}, \quad {\rm if}\;\, \varepsilon\leq h.}
\end{array}
\end{equation}
\end{lemma}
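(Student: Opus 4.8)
The plan is to imitate the continuous Caccioppoli inequality (\cref{lem:3-1}), but to account for the non-conformity $V_{h,a,\varepsilon}(D^\ast)\nsubseteq H_{a,\varepsilon}(D^\ast)$ by testing with the \emph{interpolated} cut-off of $u_h$ and controlling the resulting consistency error through the superapproximation estimates of \cref{lem:4-2}. First I would fix a smooth cut-off $\eta$ with $\eta\equiv 1$ on $D$, $\eta\equiv 0$ in a neighborhood of $\partial D^\ast\setminus\partial\Omega$, and $|\eta|_{W^{j,\infty}}\le C\delta^{-j}$ for $0\le j\le r+1$. Since $\eta\equiv 1$ on $D$ we have $\|u_h\|_{a,\varepsilon,D}\le\|\eta u_h\|_{a,\varepsilon,D^\ast}=:E$, so it suffices to bound $E$. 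Because $\eta^2u_h$ vanishes on all of $\partial D^\ast$ (using $\eta=0$ near $\partial D^\ast\setminus\Gamma$ and $u_h=0$ on $\partial D^\ast\cap\Gamma$), its interpolant $v_h:=I_h(\eta^2u_h)$ lies in $V_{h,0}(D^\ast)$, whence $a_{\varepsilon,D^\ast}(u_h,v_h)=0$ by discrete harmonicity.

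Next I would establish the discrete counterpart of \cref{eq:3-5}, obtained by expanding $\nabla(\eta u_h)$ and $\nabla(\eta^2 u_h)$ and cancelling the common cross term:
\[
E^2=a_{\varepsilon,D^\ast}(u_h,\eta^2u_h)+\varepsilon^2\int_{D^\ast}(A\nabla\eta\cdot\nabla\eta)\,u_h^2 .
\]
Subtracting the vanishing quantity $a_{\varepsilon,D^\ast}(u_h,v_h)$ turns the first term into the consistency term $a_{\varepsilon,D^\ast}(u_h,w)$ with $w:=\eta^2u_h-I_h(\eta^2u_h)$, while the second is exactly the continuous Caccioppoli weight, bounded by $\varepsilon^2 a_{\max}\|\nabla\eta\|_{L^\infty}^2\|u_h\|_{L^2(D^\ast)}^2\le C(\varepsilon/\delta)^2\|u_h\|_{L^2(D^\ast)}^2$. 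Splitting $a_{\varepsilon,D^\ast}(u_h,w)$ into its diffusion and reaction parts and applying Cauchy--Schwarz, I would bound $\|w\|_{L^2}$ and $\|\nabla w\|_{L^2}$ via \cref{eq:4-16,eq:4-15} (summed over the elements in $D^\ast$, all of which satisfy $h_K\le\delta/2$), which produces factors $h/\delta$ and $h/\delta^2$ multiplying $\|\nabla(\eta u_h)\|_{L^2}$ and $\|u_h\|_{L^2(D^\ast)}$.

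The crux --- and the main obstacle --- is controlling the gradient quantities $\|\nabla(\eta u_h)\|_{L^2}$ and $\|\nabla u_h\|_{L^2}$ on the transition shell ${\rm supp}(w)\subset{\rm supp}(\nabla\eta)$, where $\eta$ has no positive lower bound and they therefore cannot be absorbed into $E$ directly. Here the two regimes must be treated differently. For $\varepsilon\le h$ I would simply invoke the inverse estimate $\|\nabla u_h\|_{L^2(K)}\le C h_K^{-1}\|u_h\|_{L^2(K)}$ to replace $\|\nabla u_h\|_{L^2}$ by $h^{-1}\|u_h\|_{L^2(D^\ast)}$, and likewise $\|\nabla(\eta u_h)\|_{L^2}\le\|\nabla u_h\|_{L^2({\rm supp}\,\eta)}+C\delta^{-1}\|u_h\|_{L^2}\le Ch^{-1}\|u_h\|_{L^2(D^\ast)}$ (using $h\le\delta$). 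A short bookkeeping then shows each of the three contributions is $\le C(h/\delta)\|u_h\|^2_{L^2(D^\ast)}$ (the diffusion part picking up $\varepsilon^2/(h\delta)\le h/\delta$), giving $E\le C(h/\delta)^{1/2}\|u_h\|_{L^2(D^\ast)}$ with no absorption needed.

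For $h\le\varepsilon$ the inverse estimate is too lossy in the reaction term, so instead I would keep $\|\nabla(\eta u_h)\|_{L^2}\le a_{\min}^{-1/2}\varepsilon^{-1}E$ (directly from the definition of the energy norm) and use the inverse estimate only on the bare $\|\nabla u_h\|_{L^2}$ inside the diffusion part. This leaves the two contributions in the form $C\big(\varepsilon/\delta+h^2/(\delta\varepsilon)\big)\,\|u_h\|_{L^2(D^\ast)}\,E$, which are linear in $E$; I would split them off by Young's inequality as $\tfrac14 E^2+C(\varepsilon/\delta)^2\|u_h\|^2_{L^2(D^\ast)}$ (noting $h^2/(\delta\varepsilon)\le\varepsilon/\delta$ when $h\le\varepsilon$), and after absorbing $\tfrac14E^2$ into the left-hand side one obtains $E\le C(\varepsilon/\delta)\|u_h\|_{L^2(D^\ast)}$. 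In both regimes $\|u_h\|_{a,\varepsilon,D}\le E$ yields \cref{eq:4-17}, with $C_0$ depending only on $d$, $r$, the shape-regularity, and $a_{\max}/a_{\min}$.
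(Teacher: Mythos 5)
Your overall strategy coincides with the paper's: a smooth cut-off $\eta$ scaled like $\delta^{-j}$, the identity $\Vert \eta u_{h}\Vert^{2}_{a,\varepsilon}=\varepsilon^{2}\int (A\nabla\eta\cdot\nabla\eta)u_{h}^{2}\,d{\bm x}+a_{\varepsilon}(u_{h},\eta^{2}u_{h})$, replacement of $\eta^{2}u_{h}$ by its Lagrange interpolant to exploit discrete harmonicity, and superapproximation (\cref{lem:4-2}) to control the resulting consistency term. One technical point you gloss over: $D^{\ast}$ need not be resolved by the mesh, so on elements cut by $\partial D^{\ast}$ the interpolant of $\eta^{2}u_{h}$ is not obviously well defined or vanishing on $\partial D^{\ast}$; the paper handles this by passing to $\widetilde{D}^{\ast}$, the union of elements contained in $D^{\ast}$ (this is what the hypothesis $\max_{K\cap D^{\ast}\neq\emptyset}h_{K}\le\delta/2$ is for), and your "vanishes in a neighborhood" device amounts to the same thing once made precise. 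Your treatment of the regime $h\le\varepsilon$ --- keep $\Vert\nabla(\eta u_{h})\Vert_{L^{2}}\le a_{\rm min}^{-1/2}\varepsilon^{-1}E$, use the elementwise inverse estimate only against the factor $h_{K}$ produced by superapproximation, then Young's inequality and absorption --- is exactly the paper's argument up to rearrangement.

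The regime $\varepsilon\le h$, however, contains a genuine flaw. You replace $\Vert\nabla u_{h}\Vert_{L^{2}}$ and $\Vert\nabla(\eta u_{h})\Vert_{L^{2}}$ by $Ch^{-1}\Vert u_{h}\Vert_{L^{2}(D^{\ast})}$. But $h=\max_{K}h_{K}$, while the inverse estimate on an element gives the factor $h_{K}^{-1}\ge h^{-1}$: the replacement goes in the wrong direction, and is valid only for quasi-uniform meshes. The lemma (and the paper's standing assumption) requires only shape regularity, under which sizes within $D^{\ast}$ may be strongly graded: after elementwise inverse estimates your diffusion contribution is $\sum_{K}\varepsilon^{2}h_{K}^{-1}\delta^{-1}\Vert u_{h}\Vert^{2}_{L^{2}(K)}$, and $\varepsilon^{2}h_{K}^{-1}$ is not bounded by $Ch$ once some $h_{K}\ll \varepsilon^{2}/h$ (e.g.\ $\varepsilon=h$, $h_{K}=h^{2}$ gives $\varepsilon^{2}h_{K}^{-1}=1\not\le Ch$). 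Consequently your proof of the second inequality holds only with $C_{0}$ depending on the mesh-grading constant, contradicting your final claim; note the paper deliberately invokes quasi-uniformity only in \cref{lem:4-6}(ii) and the second part of \cref{thm:4-3}, not here, and \cref{thm:4-2} in the preasymptotic regime (via \cref{lem:4-4}) depends on this lemma holding without it. The fix is to treat the diffusion term in the preasymptotic regime exactly as in the asymptotic one --- this is what the paper's estimate \cref{eq:4-22} does, and it serves both regimes: apply the inverse estimate only to $\Vert\nabla u_{h}\Vert_{L^{2}(K)}$, where the superapproximation factor $h_{K}$ cancels $h_{K}^{-1}$ exactly, keep the factor $\Vert\nabla(\eta u_{h})\Vert_{L^{2}(K)}$ intact, and eliminate it by Young's inequality together with absorption of $\tfrac{1}{3}\varepsilon^{2}\Vert A^{1/2}\nabla(\eta u_{h})\Vert^{2}_{L^{2}}$ into $E^{2}$. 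So, contrary to your remark, absorption is needed in this regime as well; only the reaction term (where $\Vert \eta^{2}u_{h}-I_{h}(\eta^{2}u_{h})\Vert_{L^{2}(K)}\le C(h_{K}/\delta)\Vert u_{h}\Vert_{L^{2}(K)}$ is one-sided in the harmless direction $h_{K}\le h$) genuinely distinguishes the two cases.
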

\begin{proof}
Let $\widetilde{D}^{\ast}$ be the union of elements that are contained in $D^{\ast}$. Using the assumption that $\delta={\rm dist}\,\big(D, \, \partial D^{\ast}\setminus\partial \Omega\big)>2\max_{K\cap D^{\ast}\neq \emptyset}h_{K}$, we see that 
\begin{equation}\label{eq:4-18}
{\rm dist}\,\big(D, \, \partial \widetilde{D}^{\ast}\setminus\partial \Omega\big) \geq \frac{1}{2}\delta.
\end{equation}
Let $\eta\in C^{\infty}(\widetilde{D}^{\ast})$ be a cut-off function which satisfies 
\begin{equation}\label{eq:4-19}
\begin{array}{lll}
{\displaystyle \eta \equiv 1\;\;\; {\rm in} \;\;\,D,\quad \eta = 0\;\; \;{\rm on}\;\;\,\partial \widetilde{D}^{\ast}\setminus\partial \Omega,}\\[2mm] 
{\displaystyle |\eta|_{W^{j,\infty}(\widetilde{D}^{\ast})}\leq C\delta^{-j},\;\;\;j=1,\cdots, r+1. }
\end{array}
\end{equation}
Using the same argument as in the proof of \cref{eq:3-8}, we have
\begin{equation}\label{eq:4-20}
\Vert \eta u_{h}\Vert^{2}_{a,\varepsilon, \widetilde{D}^{\ast}} = \varepsilon^{2}\int_{\widetilde{D}^{\ast}}(A\nabla \eta \cdot \nabla \eta) u_{h}^{2}\,d{\bm x} +{a}_{\varepsilon,\widetilde{D}^{\ast}}(u_{h},\eta^{2}u_{h}).
\end{equation}
In contrast to the continuous case, the last term on the right hand side of \cref{eq:4-20} does not vanish since $\eta^{2}u_{h} \notin V_{h,0}(\widetilde{D}^{\ast})$. However, noting that $I_{h}(\eta^{2}u_{h}) \in V_{h,0}(\widetilde{D}^{\ast})$, where $I_{h}$ denotes the standard Lagrange interpolant, we see that ${a}_{\varepsilon,\widetilde{D}^{\ast}}(u_{h},I_{h}(\eta^{2}u_{h})) = 0$. It follows that
\begin{equation}\label{eq:4-21}
\begin{array}{lll}
{\displaystyle \Vert \eta u_{h}\Vert^{2}_{a,\varepsilon, \widetilde{D}^{\ast}} = \varepsilon^{2}\int_{\widetilde{D}^{\ast}}(A\nabla \eta \cdot \nabla \eta) u_{h}^{2}\,d{\bm x} +{a}_{\varepsilon,\widetilde{D}^{\ast}}\big(u_{h},\eta^{2}u_{h} - I_{h}(\eta^{2}u_{h})\big) }\\[4mm]
{\displaystyle\qquad   \leq C(\varepsilon/\delta)^{2} \Vert u_{h} \Vert^{2}_{L^{2}(\widetilde{D}^{\ast})} + \varepsilon^{2} \big(A\nabla u_{h}, \nabla (\eta^{2}u_{h} - I_{h}(\eta^{2}u_{h}))\big)_{L^{2}(\widetilde{D}^{\ast})} }\\[3mm]
{\displaystyle \qquad \quad + \,\big(u_{h}, \eta^{2}u_{h} - I_{h}(\eta^{2}u_{h})\big)_{L^{2}(\widetilde{D}^{\ast})}.}
\end{array}
\end{equation}

To estimate the last two terms of \cref{eq:4-21}, we shall use the superapproximation result in \cref{lem:4-2}. Applying \cref{eq:4-15} and an inverse estimate (local to each element), we have
\begin{equation}\label{eq:4-22}
\begin{array}{lll}
{\displaystyle \mm{\big|} \big(A\nabla u_{h}, \nabla (\eta^{2}u_{h} - I_{h}(\eta^{2}u_{h}))\big)_{L^{2}(\widetilde{D}^{\ast})} \mm{\big|}}\\[2mm]
{\displaystyle \leq C\sum_{K\subset \widetilde{D}^{\ast}}h_{K}\Vert \nabla u_{h}\Vert_{L^{2}(K)} \big(\delta^{-1} \Vert\nabla (\eta u_{h})\Vert_{L^{2}(K)} + \delta^{-2} \Vert u_{h}\Vert_{L^{2}(K)}\big)}\\[4mm]
{\displaystyle \leq C\delta^{-2}\sum_{K\subset \widetilde{D}^{\ast}}  \Vert u_{h}\Vert^{2}_{L^{2}(K)} + \frac{a_{\rm min}}{3} \sum_{K \subset \widetilde{D}^{\ast}} \Vert\nabla (\eta u_{h})\Vert^{2}_{L^{2}(K)}}\\[3mm]
{\displaystyle \leq C\delta^{-2}\Vert u_{h}\Vert^{2}_{L^{2}(\widetilde{D}^{\ast})} + \frac{1}{3}\Vert A^{1/2}\nabla(\eta u_{h})\Vert^{2}_{L^{2}(\widetilde{D}^{\ast})}.}
\end{array}
\end{equation}
If $h\leq \varepsilon$, we can use \cref{eq:4-16} and a similar argument as above to deduce
\begin{equation}\label{eq:4-23}
\begin{array}{lll}
{\displaystyle \mm{\big|}\big(u_{h}, \eta^{2}u_{h} - I_{h}(\eta^{2}u_{h})\big)_{L^{2}(\widetilde{D}^{\ast})}\mm{\big|}}\\[3mm]
{\displaystyle  \leq \,C(h/\delta)^{2}\Vert u_{h}\Vert^{2}_{L^{2}(\widetilde{D}^{\ast})} + \frac{h^{2}}{3}\Vert A^{1/2}\nabla(\eta u_{h})\Vert^{2}_{L^{2}(\widetilde{D}^{\ast})} }\\[3mm]
{\displaystyle \leq\,C(\varepsilon/\delta)^{2}\Vert u_{h}\Vert^{2}_{L^{2}(\widetilde{D}^{\ast})} + \frac{\varepsilon^{2}}{3}\Vert A^{1/2}\nabla(\eta u_{h})\Vert^{2}_{L^{2}(\widetilde{D}^{\ast})}.}
\end{array}
\end{equation}
Inserting \cref{eq:4-22,eq:4-23} into \cref{eq:4-21} and using \cref{eq:4-19} and the fact that $\widetilde{D}^{\ast} \subset D^{\ast}$ gives \cref{eq:4-17} in the case that $h\leq \varepsilon$. 

Next we assume that $\varepsilon\leq h$. Using \cref{eq:4-16}, \cref{eq:4-19}, and an inverse estimate, we see that for each $K\subset \widetilde{D}^{\ast}$, 
\begin{equation}\label{eq:4-24}
\Vert \eta^{2}u_{h}- I_{h}(\eta^{2}u_{h})\Vert_{L^{2}(K)}\leq C\Big(\frac{h^{2}_{K}}{\delta^{2}}+\frac{h_{K}}{\delta} \Big)\Vert u_{h}\Vert_{L^{2}(K)}\leq C\frac{h_{K}}{\delta} \Vert u_{h} \Vert_{L^{2}(K)}.
\end{equation}
It follows that
\begin{equation}\label{eq:4-25}
\mm{\big|}\big(u_{h}, \eta^{2}u_{h} - I_{h}(\eta^{2}u_{h})\big)_{L^{2}(\widetilde{D}^{\ast})} \mm{\big|} \leq C(h/\delta) \Vert u_{h}\Vert^{2}_{L^{2}(\widetilde{D}^{\ast})}.
\end{equation}
Inserting \cref{eq:4-22,eq:4-25} into \cref{eq:4-21} and using the assumption that $\varepsilon\leq h$ gives the second inequality of \cref{eq:4-17}.
\end{proof}

If the distance between $D$ and $D^{\ast}$ is large with respect to $\varepsilon$ and $h$, we can get a sharper Caccioppoli inequality for discrete generalized harmonic functions as in the continuous case; see \cref{lem:3-4}.
\begin{lemma}\label{lem:4-4}
There exist positive constants $c_{0}$, $c_{1}$ and $C$ such that for any open connected sets $D\subset D^{\ast}\subset \Omega$ with $\delta:={\rm dist}\,\big(D, \, \partial D^{\ast}\setminus\partial \Omega\big) \geq c_{0} \max\{\varepsilon,h\}$ and for any $u_{h}\in V_{h,a,\varepsilon}(D^{\ast})$,
\begin{equation}\label{eq:4-26}
\begin{array}{lll}
{\displaystyle \Vert u_{h}\Vert_{a,\varepsilon,D}\leq C(\varepsilon/\delta)e^{-c_{1}\delta/\varepsilon}\Vert u_{h} \Vert_{L^{2}(D^{\ast})}, \;\qquad {\rm if}\;\, h\leq \varepsilon,}\\[3mm]
{\displaystyle \Vert u_{h}\Vert_{a,\varepsilon,D}\leq C(h/\delta)^{1/2} e^{-c_{1}\delta/h} \Vert u_{h} \Vert_{L^{2}(D^{\ast})}, \quad {\rm if}\;\, \varepsilon\leq h.}
\end{array}
\end{equation}
\end{lemma}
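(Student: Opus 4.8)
The plan is to upgrade the single-scale discrete Caccioppoli inequality of \cref{lem:4-3} to one with exponential decay by iterating it across a chain of nested domains, exactly in the spirit of the continuous argument in \cref{lem:3-4}, but now invoking the \emph{discrete} inequality \cref{eq:4-17} at each link. The two regimes $h\le\varepsilon$ and $\varepsilon\le h$ are treated in parallel; the only difference is whether the per-step amplification factor behaves like $\varepsilon/(\text{step width})$ or like $(h/(\text{step width}))^{1/2}$.

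First, I would establish a pure $L^{2}$-to-$L^{2}$ exponential estimate. Fix an integer $N\ge 1$ and choose nested open sets $D=D_{0}\subset D_{1}\subset\cdots\subset D_{N}=D^{\ast}$ with ${\rm dist}(D_{k-1},\partial D_{k}\setminus\partial\Omega)=\delta/N$, their boundaries lying on $\partial\Omega$ being shared so that the restriction $u_{h}|_{D_{k}}$ again belongs to $V_{h,a,\varepsilon}(D_{k})$. Since $\|\cdot\|_{L^{2}(D_{k-1})}\le\|\cdot\|_{a,\varepsilon,D_{k-1}}$, applying the first line of \cref{eq:4-17} to the pair $D_{k-1}\subset D_{k}$ gives, in the regime $h\le\varepsilon$,
\[
\|u_{h}\|_{L^{2}(D_{k-1})}\le C_{0}\frac{N\varepsilon}{\delta}\,\|u_{h}\|_{L^{2}(D_{k})},
\]
provided $\delta/N\ge 2\max_{K\cap D^{\ast}\neq\emptyset}h_{K}$. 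Iterating over $k=1,\dots,N$ yields $\|u_{h}\|_{L^{2}(D)}\le (C_{0}N\varepsilon/\delta)^{N}\|u_{h}\|_{L^{2}(D^{\ast})}$, and choosing $N=\lfloor\delta/(eC_{0}\varepsilon)\rfloor$ makes the per-step factor $\le e^{-1}$, so that $\|u_{h}\|_{L^{2}(D)}\le e^{\,1-\delta/(eC_{0}\varepsilon)}\|u_{h}\|_{L^{2}(D^{\ast})}$. The hypothesis $\delta\ge c_{0}\max\{\varepsilon,h\}$ with $c_{0}$ large enough guarantees $N\ge 1$, and since $h\le\varepsilon$ it also secures the step-size constraint $\delta/N\ge eC_{0}\varepsilon\ge 2h\ge 2\max h_{K}$. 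In the regime $\varepsilon\le h$ the identical chaining with the second line of \cref{eq:4-17} produces a per-step factor $C_{0}(Nh/\delta)^{1/2}$; taking $N\approx\delta/(e^{2}C_{0}^{2}h)$ renders it $\le e^{-1}$ and gives $\|u_{h}\|_{L^{2}(D)}\le Ce^{-c_{1}\delta/h}\|u_{h}\|_{L^{2}(D^{\ast})}$.

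To recover the \emph{energy} norm on $D$ together with the stated prefactor $\varepsilon/\delta$ (resp. $(h/\delta)^{1/2}$), I would peel off one final application of the basic inequality over a half-width buffer. Introduce $\widehat{D}$ with $D\subset\widehat{D}\subset D^{\ast}$ and ${\rm dist}(D,\partial\widehat{D}\setminus\partial\Omega)={\rm dist}(\widehat{D},\partial D^{\ast}\setminus\partial\Omega)=\delta/2$. Applying \cref{eq:4-17} on $D\subset\widehat{D}$ gives $\|u_{h}\|_{a,\varepsilon,D}\le C_{0}(2\varepsilon/\delta)\|u_{h}\|_{L^{2}(\widehat{D})}$ (resp. $C_{0}(2h/\delta)^{1/2}\|u_{h}\|_{L^{2}(\widehat{D})}$), while the $L^{2}$-to-$L^{2}$ estimate of the previous paragraph applied on $\widehat{D}\subset D^{\ast}$ (distance $\delta/2$) bounds $\|u_{h}\|_{L^{2}(\widehat{D})}$ by $Ce^{-c_{1}\delta/\varepsilon}\|u_{h}\|_{L^{2}(D^{\ast})}$ (resp. $Ce^{-c_{1}\delta/h}\|u_{h}\|_{L^{2}(D^{\ast})}$). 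Composing the two factors yields precisely \cref{eq:4-26}, with $c_{1}$ halved relative to the $L^{2}$ estimate.

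The main obstacle is genuinely discrete and has no counterpart in \cref{lem:3-4}: one must verify that \cref{lem:4-3} may legitimately be re-applied at every link of the chain. This hinges on (i) the restriction of a discrete generalized harmonic function to a mesh-resolved subdomain remaining discrete generalized harmonic there, which follows from zero-extension of interior discrete test functions together with the standing assumption that all subdomains are resolved by the mesh; and (ii) keeping every step width $\delta/N$ above the local resolution threshold $2\max_{K}h_{K}$ demanded by \cref{lem:4-3}. The second point caps the admissible number of links, and it is exactly this cap that explains why the decay saturates at rate $\delta/\varepsilon$ when $h\le\varepsilon$ but only at rate $\delta/h$ when $\varepsilon\le h$: in the preasymptotic regime it is the mesh, rather than the layer width $\varepsilon$, that limits how finely the buffer can be subdivided.
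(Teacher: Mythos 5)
Your proposal is correct and follows essentially the same route as the paper's own proof: iterate the discrete Caccioppoli inequality \cref{eq:4-17} over a chain of nested domains with step width $\delta/N$ and $N$ chosen so the per-step factor is at most $e^{-1}$, then recover the energy norm and the prefactor by one application of \cref{eq:4-17} across a half-width buffer $D_{\delta/2}$ composed with the $L^{2}$-to-$L^{2}$ exponential bound. If anything, you are slightly more explicit than the paper about the genuinely discrete technicalities (zero-extension of interior test functions and keeping each step width above the mesh-resolution threshold), which the paper leaves implicit.
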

\begin{proof}
Estimate \cref{eq:4-26} can be proved by an iteration argument similar to that in the proof of \cref{lem:3-4} and thus we only prove it for the case $\varepsilon\leq h$. \mm{Let $C_{d}$ and $C_{0}$ be the constants as in \cref{eq:3-0,lem:4-3}, respectively,} and we assume that $C^{2}_{0}\mm{C_{d}}e^{2}h<\delta$. Let $N = \lfloor \delta/(C_{0}^{2}\mm{C_{d}}e^{2}h)\rfloor$.
Choose $\{{D}_{k}\}_{k=0}^{N}$ such that ${D} = {D}_{0}\subset{D}_{1}\subset\cdots\subset{D}_{N} = {D}^{\ast}$ and ${\rm dist}({D}_{k-1}, \partial {D}_{k}\setminus\partial \Omega) = \delta/N$. Applying \cref{eq:4-17} in the case $\varepsilon\leq h$ on ${D}_{0}$ and ${D}_{1}$ gives
\begin{equation}\label{eq:4-27}
\Vert u_{h} \Vert_{L^{2}({D}_{0})} \leq \big(C^{2}_{0}\mm{C_{d}}Nh/\delta\big)^{1/2} \,\Vert u_{h} \Vert_{L^{2}({D}^{1})}.
\end{equation}
Repeating the argument on domains ${D}_{1},\cdots,{D}_{N}$, it follows that
\begin{equation}\label{eq:4-28}
\Vert u_{h} \Vert_{L^{2}({D})}=\Vert u_{h} \Vert_{L^{2}({D}_{0})} \leq \big(C^{2}_{0}\mm{C_{d}}Nh/\delta\big)^{N/2} \,\Vert u_{h} \Vert_{L^{2}({D}^{\ast})}.
\end{equation}
Noting that $N$ satisfies $C^{2}_{0}\mm{C_{d}}Nh/\delta\leq e^{-2}$ and $N\geq \delta/(C_{0}^{2}\mm{C_{d}}e^{2}h)-1$, we further have
\begin{equation}\label{eq:4-29}
\Vert u_{h} \Vert_{L^{2}({D})}\leq e^{-N} \;\Vert u_{h} \Vert_{L^{2}({D}^{\ast})} \leq e^{1-\delta/(C_{0}^{2}\mm{C_{d}}e^{2}h)}\,\Vert u_{h} \Vert_{L^{2}({D}^{\ast})}.
\end{equation}
To finish the proof, let ${D}_{\delta/2}$ lie between ${D}$ and ${D}^{\ast}$ with ${\rm dist}({D}, \partial {D}_{\delta/2}\setminus\partial \Omega) = {\rm dist}({D}_{\delta/2}, \partial {D}^{\ast}\setminus\partial \Omega)=\delta/2$. The desired estimate \cref{eq:4-26} follows by first applying \cref{eq:4-17} on $D$ and $D_{\delta/2}$ and then applying \cref{eq:4-29} on $D_{\delta/2}$ and $D^{\ast}$.
\end{proof}

The following lemma gives the stability of the operator $P_{h}$ defined by \cref{eq:4-4}.
\begin{lemma}\label{lem:4-5}
Let $D^{\ast}\subset \Omega$ be a collection of elements and $D\subset D^{\ast}$. Assume that $\eta \in W^{1,\infty}(D^{\ast})$ satisfies $\Vert\eta\Vert_{L^{\infty}(D^{\ast})}\leq 1$ and ${\rm supp}(\eta)\subset \overline{D}$. Then, there exists $C>0$ such that 
\begin{equation}\label{eq:4-25-0}
 \Vert I_{h}(\eta u_{h})\Vert_{a,\varepsilon,D^{\ast}}\leq C\big(1+\varepsilon\Vert \nabla \eta\Vert_{L^{\infty}(D^{\ast})}\big)\Vert u_{h}\Vert_{a,\varepsilon,D}, \quad \forall u_{h}\in V_{h}(D^{\ast}).
\end{equation}
\end{lemma}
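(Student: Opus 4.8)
The plan is to prove the bound element by element and then sum. Since $I_h(\eta u_h)$ vanishes on any element none of whose Lagrange nodes lies in $\mathrm{supp}(\eta)\subset\overline{D}$, only elements meeting $\overline{D}$ contribute, so every local estimate is localized to $D$ and $\|u_h\|_{a,\varepsilon,D}$ is indeed the correct right-hand side. On a fixed element $K$ I would split the squared energy norm as $\|I_h(\eta u_h)\|_{a,\varepsilon,K}^2 = \|I_h(\eta u_h)\|_{L^2(K)}^2 + \varepsilon^2\|A^{1/2}\nabla I_h(\eta u_h)\|_{L^2(K)}^2$ and treat the two pieces separately, the first being expected to contribute the ``$1$'' and the second the ``$\varepsilon\|\nabla\eta\|_{L^\infty}$'' in the final factor.

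For the $L^2$ piece I would use the $L^\infty$-stability of the nodal interpolant together with $\|\eta\|_{L^\infty}\leq 1$, namely $\|I_h(\eta u_h)\|_{L^\infty(K)}\leq C\|\eta u_h\|_{L^\infty(K)}\leq C\|u_h\|_{L^\infty(K)}$, and then an inverse inequality $\|u_h\|_{L^\infty(K)}\leq C h_K^{-d/2}\|u_h\|_{L^2(K)}$ to conclude $\|I_h(\eta u_h)\|_{L^2(K)}\leq C\|u_h\|_{L^2(K)}\leq C\|u_h\|_{a,\varepsilon,K}$, with no dependence on $\nabla\eta$. For the gradient piece I would decompose $\nabla I_h(\eta u_h) = \nabla(\eta u_h) + \nabla\big(\eta u_h - I_h(\eta u_h)\big)$. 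The first term is handled by the product rule and $\|\eta\|_{L^\infty}\leq 1$, giving $\|\nabla(\eta u_h)\|_{L^2(K)}\leq \|\nabla\eta\|_{L^\infty}\|u_h\|_{L^2(K)} + \|\nabla u_h\|_{L^2(K)}$; after multiplying by $\varepsilon a_{\rm max}^{1/2}$, the second summand is absorbed into $\|u_h\|_{a,\varepsilon,K}$ via $\varepsilon a_{\rm min}^{1/2}\|\nabla u_h\|_{L^2(K)}\leq \|u_h\|_{a,\varepsilon,K}$, while the first produces exactly the desired $\varepsilon\|\nabla\eta\|_{L^\infty}$ contribution.

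The crux is the interpolation-error term $\nabla\big(\eta u_h - I_h(\eta u_h)\big)$. Because $\eta$ is only $W^{1,\infty}$, I cannot invoke the superapproximation estimate of \cref{lem:4-2}, which requires a smooth weight; this is the main obstacle. Instead I would subtract the element mean $\bar{\eta}_K$ of $\eta$ on $K$: since $\bar{\eta}_K u_h\in P_r$ is reproduced by the interpolant, one has $\eta u_h - I_h(\eta u_h) = g - I_h g$ with $g := (\eta-\bar{\eta}_K)u_h$. I would then bound $\|\nabla g\|_{L^2(K)}$ and $\|\nabla I_h g\|_{L^2(K)}$ separately, using the Poincar\'e-type estimate $\|\eta-\bar{\eta}_K\|_{L^\infty(K)}\leq C h_K\|\nabla\eta\|_{L^\infty(K)}$ together with the inverse inequality $\|\nabla u_h\|_{L^2(K)}\leq C h_K^{-1}\|u_h\|_{L^2(K)}$ for the former, and the $L^\infty$-stability of $I_h$ followed by an inverse inequality for the latter; in both the factors of $h_K$ cancel and one obtains $\leq C\|\nabla\eta\|_{L^\infty}\|u_h\|_{L^2(K)}$. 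This average-subtraction device is precisely what avoids a naive inverse estimate on $I_h(\eta u_h)$, which would introduce an uncontrolled factor $\varepsilon/h_K$ in the preasymptotic regime. Collecting the two pieces yields $\|I_h(\eta u_h)\|_{a,\varepsilon,K}\leq C\big(1+\varepsilon\|\nabla\eta\|_{L^\infty}\big)\|u_h\|_{a,\varepsilon,K}$ on each element, and summing over the elements meeting $\overline{D}$ gives \cref{eq:4-25-0}.
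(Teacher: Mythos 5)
Your core estimates are correct, but you take a genuinely different route from the paper, and the comparison is instructive. The paper's proof is three lines: it first asserts the stability bound $\Vert I_{h}(\eta u_{h})\Vert_{a,\varepsilon,D^{\ast}}\leq C\Vert \eta u_{h}\Vert_{a,\varepsilon,D^{\ast}}$, justified only by citing standard interpolation error estimates plus a triangle inequality, and then performs the product-rule/support computation that you apply to the term $\nabla(\eta u_{h})$. Your elementwise argument is a replacement for that first step, and it is the more rigorous one. Since $\eta$ is only $W^{1,\infty}$, the product $\eta u_{h}$ has no second derivatives on an element, so the cited interpolation estimate does not apply to it; in fact the paper's intermediate inequality cannot hold with a constant independent of $\eta$: in $d=3$, take $u_{h}\equiv 1$ and $\eta$ a bump of radius $s\ll h_{K}$ centered at a mesh vertex, so that $I_{h}(\eta u_{h})$ is the nodal hat function; the left-hand side is then of order $\varepsilon h_{K}^{1/2}+h_{K}^{3/2}$ while $\Vert \eta u_{h}\Vert_{a,\varepsilon,D^{\ast}}$ is of order $\varepsilon s^{1/2}+s^{3/2}$, and the ratio blows up as $s\to 0$. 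Your mean-subtraction device (writing $\eta u_{h}-I_{h}(\eta u_{h})=g-I_{h}g$ with $g=(\eta-\bar{\eta}_{K})u_{h}$, then using $\Vert \eta-\bar{\eta}_{K}\Vert_{L^{\infty}(K)}\leq Ch_{K}\Vert\nabla\eta\Vert_{L^{\infty}(K)}$ and inverse estimates) is exactly the standard way to obtain superapproximation for a merely Lipschitz weight, i.e., a $W^{1,\infty}$ substitute for \cref{lem:4-2}, and your identification of this as the crux is on target; the $L^{2}$ part via $L^{\infty}$-stability plus an inverse inequality and the absorption of the $\Vert\nabla u_{h}\Vert_{L^{2}(K)}$ term into $\Vert u_{h}\Vert_{a,\varepsilon,K}$ are also correct.

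The one claim that does not hold as you state it is the final localization. Your per-element bounds necessarily involve $\Vert u_{h}\Vert_{L^{2}(K)}$ and $\Vert \nabla u_{h}\Vert_{L^{2}(K)}$ over the \emph{whole} element (the inverse inequalities cannot be confined to $K\cap D$), and an element can contain a Lagrange node of ${\rm supp}(\eta)$ while protruding from $D$. Summing therefore yields $\Vert u_{h}\Vert_{a,\varepsilon,\widetilde{D}}$, where $\widetilde{D}$ is the union of all elements containing a node at which $\eta\neq 0$ — in general a strictly larger set than $D$ — so "every local estimate is localized to $D$" is not justified. The sum does collapse to $\Vert u_{h}\Vert_{a,\varepsilon,D}$ when $D$ is resolved by the mesh and $\eta$ vanishes on $\partial D$: then every node with $\eta\neq 0$ lies in the open set $D$, and any element containing such a node must be one of the elements composing $\overline{D}$. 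These conditions hold, up to a harmless one-element-layer spill-over near $\partial D\cap\Gamma$, in every application the paper makes of the lemma ($D=\omega_{i}$, $\eta=\chi_{i}$, with all subdomains resolved by the mesh by the standing assumption of \cref{sec:4}), and such an enlargement only perturbs $\delta^{\ast}$ by $O(h)$ in \cref{thm:4-2,thm:4-3}. But resolvedness is not among the lemma's stated hypotheses, and the looseness sits in the statement itself rather than in your approach: the same bump construction as above, with $D=B(x_{0},s)\cap\Omega$ not resolved by the mesh and $s\to 0$, violates \cref{eq:4-25-0} outright, so some such hypothesis must be added for the lemma to be true at all. Make the resolvedness of $D$ (and the vanishing of $\eta$ on $\partial D$) explicit, and your proof is complete — and, unlike the paper's, rigorous at the interpolation-stability step.
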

\begin{proof}
Using error estimates of the interpolation operator $I_{h}$ (see, e.g., \cite[Theorem 4.4.20]{brenner2008mathematical}) and a triangle inequality gives that
\begin{equation}
\Vert I_{h}(\eta u_{h})\Vert_{a,\varepsilon,D^{\ast}} \leq C\Vert \eta u_{h}\Vert_{a,\varepsilon,D^{\ast}}.    
\end{equation}
Since $\Vert\eta\Vert_{L^{\infty}(D^{\ast})}\leq 1$ and ${\rm supp}(\eta)\subset \overline{D}$, we further have
\begin{equation}\label{eq:4-25-1}
\Vert I_{h}(\eta u_{h})\Vert_{a,\varepsilon,D^{\ast}}\leq C\big(\varepsilon \Vert A^{1/2}\nabla(\eta u_{h})\Vert_{L^{2}(D^{\ast})} + \Vert u_{h}\Vert_{L^{2}(D)}\big).
\end{equation}
Applying a triangle inequality and the assumptions on $\eta$ again, we see that
\begin{equation}\label{eq:4-25-2}
\begin{array}{lll}
{\displaystyle \Vert A^{1/2}\nabla(\eta u_{h})\Vert_{L^{2}(D^{\ast})}\leq \Vert \eta A^{1/2}\nabla u_{h}\Vert_{L^{2}(D^{\ast})} + \Vert u_{h} A^{1/2}\nabla \eta \Vert_{L^{2}(D^{\ast})} }\\[2mm]
{\displaystyle \qquad \leq \Vert A^{1/2}\nabla u_{h}\Vert_{L^{2}(D)} + C\Vert \nabla \eta\Vert_{L^{\infty}(D^{\ast})} \Vert u_{h} \Vert_{L^{2}(D)}.}
\end{array}
\end{equation}
Inserting \cref{eq:4-25-2} into \cref{eq:4-25-1} yields \cref{eq:4-25-0}.
\end{proof}

Now we are ready to prove \cref{thm:4-2}.
\begin{proof}[Proof of \cref{thm:4-2}]
Assuming that $h\leq \varepsilon$ and applying \cref{lem:4-4,lem:4-5} with $D=\omega$, $D^{\ast} = \omega^{\ast}$, $u_{h}=u_{h,\varepsilon}|_{\omega^{\ast}}-\psi_{h,\varepsilon}\in V_{h,a,\varepsilon}(\omega^{\ast})$, and $\eta = \chi$, we get
\begin{equation}\label{eq:4-25-3}
\begin{array}{lll}
{\displaystyle \big\Vert I_{h}\big(\chi (u_{h,\varepsilon} - \psi_{h,\varepsilon})\big)\big\Vert_{a,\varepsilon,\omega}\leq C\big(1+\varepsilon \Vert\nabla \chi\Vert_{L^{\infty}(\omega)}\big) (\varepsilon/\delta^{\ast})e^{-c_{1}\delta^{\ast}/\varepsilon}}\\[3mm]
{\displaystyle \qquad \qquad \qquad \qquad \qquad \qquad \qquad \times\,\Vert u_{h,\varepsilon}-\psi_{h,\varepsilon} \Vert_{L^{2}(\omega^{\ast})}.}
\end{array}
\end{equation}
Estimate \cref{eq:4-12-0} follows from \cref{eq:4-25-3} and the fact that $\Vert \psi_{h,\varepsilon}\Vert_{L^{2}(\omega^{\ast})} \leq \Vert f\Vert_{L^{2}(\omega^{\ast})}$. The other case can be proved similarly.
\end{proof}


It remains to prove \cref{thm:4-3}. To this end, we first construct an auxiliary space in which any discrete generalized harmonic function can be approximated with an algebraic convergence rate as in the continuous case; see \cref{lem:3-5}.
\begin{lemma}\label{lem:4-6}
Let $D$ and $D^{\ast}$ be open connected subsets of $\Omega$ with $D\subset D^{\ast}$ and $\delta={\rm dist}\,\big(D, \, \partial D^{\ast}\setminus\partial \Omega\big)>4h$, and let $m\in \mathbb{N}$ satisfy $m\geq C_{1}|D^{\ast}|(\delta/2)^{-d}$ with $C_{1}$ given by \cref{lem:3-3-0}. There exist an $m$-dimensional space $\Psi_{h,m}(D^{\ast})\subset V_{h,a,\varepsilon}(D^{\ast})$ and a constant $C>0$ independent of $\varepsilon$, $h$, and $\delta$ such that the following holds.
\begin{itemize}
\item[(i)] Supposing that $h\leq \varepsilon$, then for any $u_{h}\in V_{h,a,\varepsilon}(D^{\ast})$,
\begin{equation}\label{eq:4-30}
\inf_{\varphi_{h}\in \Psi_{h,m}({D}^{\ast})} \Vert u_{h}-\varphi_{h}\Vert_{a,\varepsilon,{D}}\leq C |{D}^{\ast}|^{1/d}m^{-1/d}\delta^{-1}\,\Vert u_{h} \Vert_{a,\varepsilon,{D}^{\ast}}.
\end{equation}
\item[(ii)] Supposing that $h\geq\varepsilon$ and that the meshes $\{ \tau_{h}\}$ are quasi-uniform, then for any $u_{h}\in V_{h,a,\varepsilon}(D^{\ast})$,
\begin{equation}\label{eq:4-31}
\inf_{\varphi_{h}\in {\Psi}_{h,m}({D}^{\ast})} \Vert u_{h}-\varphi_{h}\Vert_{a,\varepsilon,{D}}\leq C |{D}^{\ast}|^{1/d}m^{-1/d}(h\delta)^{-1/2}\,\Vert u_{h} \Vert_{a,\varepsilon,{D}^{\ast}}.
\end{equation}
\end{itemize}
\end{lemma}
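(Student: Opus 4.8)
The plan is to transcribe the proof of \cref{lem:3-5} to the discrete setting, replacing its two continuous ingredients—the abstract $n$-width approximation of \cref{lem:3-3} and the Caccioppoli inequality \cref{eq:3-6}—by the version of \cref{lem:3-3} applied to the finite-dimensional closed subspace $V_{h,a,\varepsilon}(D^{\ast})$ and by the discrete Caccioppoli inequalities of \cref{lem:4-3}. First I introduce an intermediate domain $D_{\delta/2}$ with ${\rm dist}(D,\,\partial D_{\delta/2}\setminus\partial\Omega) = {\rm dist}(D_{\delta/2},\,\partial D^{\ast}\setminus\partial\Omega) = \delta/2$, so that $D\subset D_{\delta/2}\subset D^{\ast}$. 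The hypothesis $\delta>4h$ forces $h\le \delta/4 = \tfrac12(\delta/2)$, which is exactly the mesh condition needed to apply \cref{lem:4-3} on the pair $(D,D_{\delta/2})$ whose separation is $\delta/2$.

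Since $V_{h,a,\varepsilon}(D^{\ast})$ is finite-dimensional it is a closed subspace of $H^{1}(D^{\ast})$, so \cref{lem:3-3} applies verbatim with $\mathcal{S}(D^{\ast}) = V_{h,a,\varepsilon}(D^{\ast})$ on the pair $(D_{\delta/2},D^{\ast})$. This produces, for every $m\ge C_{1}|D^{\ast}|(\delta/2)^{-d}$, an $m$-dimensional space $\Psi_{h,m}(D^{\ast})\subset V_{h,a,\varepsilon}(D^{\ast})$—spanned by the eigenfunctions of \cref{eq:3-21} with $\mathcal{S}(D^{\ast})=V_{h,a,\varepsilon}(D^{\ast})$, hence genuinely discrete harmonic—such that
\[
\inf_{\varphi_{h}\in\Psi_{h,m}(D^{\ast})}\Vert u_{h}-\varphi_{h}\Vert_{L^{2}(D_{\delta/2})}\le C_{2}m^{-1/d}|D^{\ast}|^{1/d}\Vert u_{h}\Vert_{H^{1}(D^{\ast})},\qquad\forall u_{h}\in V_{h,a,\varepsilon}(D^{\ast}).
\]
The inclusion $\Psi_{h,m}(D^{\ast})\subset V_{h,a,\varepsilon}(D^{\ast})$ is crucial: for the minimizing $\varphi_{h}$ it guarantees that $(u_{h}-\varphi_{h})|_{D_{\delta/2}}$ again lies in $V_{h,a,\varepsilon}(D_{\delta/2})$, since any $v_{h}\in V_{h,0}(D_{\delta/2})$ can be extended by zero to an element of $V_{h,0}(D^{\ast})$, against which $u_{h}-\varphi_{h}$ is orthogonal on $D^{\ast}$. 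This membership is what licenses the use of \cref{lem:4-3} below.

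Next I convert the $L^{2}$ bound into an energy bound on $D$ by applying \cref{lem:4-3} to $u_{h}-\varphi_{h}\in V_{h,a,\varepsilon}(D_{\delta/2})$ on $(D,D_{\delta/2})$. For $h\le\varepsilon$ this yields the factor $C_{0}(2\varepsilon/\delta)$, and for $\varepsilon\le h$ the factor $C_{0}(2h/\delta)^{1/2}$, multiplying $\Vert u_{h}-\varphi_{h}\Vert_{L^{2}(D_{\delta/2})}$. Composing with the $L^{2}$ estimate leaves $\Vert u_{h}\Vert_{H^{1}(D^{\ast})}$ on the right, which I finally trade for the energy norm. In the regime $h\le\varepsilon$ I use $\Vert u_{h}\Vert_{H^{1}(D^{\ast})}\le\sqrt{2}(\varepsilon a_{\rm min}^{1/2})^{-1}\Vert u_{h}\Vert_{a,\varepsilon,D^{\ast}}$ (valid under the same WLOG assumption $\varepsilon^{-1}a_{\rm min}^{-1/2}\ge1$ used in \cref{lem:3-5}); the $\varepsilon^{-1}$ cancels the $\varepsilon$ from the Caccioppoli factor and gives the clean $\delta^{-1}$ scaling of \cref{eq:4-30}. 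In the regime $\varepsilon\le h$ I instead invoke the global inverse inequality $\Vert\nabla u_{h}\Vert_{L^{2}(D^{\ast})}\le Ch^{-1}\Vert u_{h}\Vert_{L^{2}(D^{\ast})}$—this is precisely where quasi-uniformity of $\{\tau_{h}\}$ enters—to obtain $\Vert u_{h}\Vert_{H^{1}(D^{\ast})}\le Ch^{-1}\Vert u_{h}\Vert_{a,\varepsilon,D^{\ast}}$; then $(h/\delta)^{1/2}\cdot h^{-1}=(h\delta)^{-1/2}$ delivers \cref{eq:4-31}.

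The main obstacle, and the essential departure from the continuous \cref{lem:3-5}, is this bookkeeping of scales: the Caccioppoli factor and the norm-conversion factor are individually $\varepsilon$- or $h$-dependent, and only their product exhibits the clean $\delta^{-1}$ (respectively $(h\delta)^{-1/2}$) behaviour. Getting this right is exactly what forces the case split $h\le\varepsilon$ versus $\varepsilon\le h$ and, in the preasymptotic case, the quasi-uniformity hypothesis so that the inverse estimate is available. A secondary point demanding care is the verification that $(u_{h}-\varphi_{h})|_{D_{\delta/2}}\in V_{h,a,\varepsilon}(D_{\delta/2})$—including that zero-extension of discrete test functions respects the boundary conditions on $\Gamma$ across the nested domains—since \cref{lem:4-3} is stated only for genuinely discrete harmonic functions.
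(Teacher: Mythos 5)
Your proposal is correct and follows essentially the same route as the paper's proof: the intermediate domain $D_{\delta/2}$, \cref{lem:3-3} applied with $\mathcal{S}(D^{\ast}) = V_{h,a,\varepsilon}(D^{\ast})$, and the discrete Caccioppoli inequality \cref{eq:4-17} on the pair $(D,D_{\delta/2})$, with the inverse estimate (hence quasi-uniformity) entering exactly where you place it in the preasymptotic case. The only cosmetic difference is that for case (ii) the paper invokes the seminorm variant \cref{eq:3-18} of \cref{lem:3-3} and applies the inverse estimate to $\Vert \nabla u_{h}\Vert_{L^{2}(D^{\ast})}$, whereas you use the full-norm variant \cref{eq:3-17} and apply the inverse estimate to bound $\Vert u_{h}\Vert_{H^{1}(D^{\ast})}$ by $Ch^{-1}\Vert u_{h}\Vert_{a,\varepsilon,D^{\ast}}$ -- an equivalent reshuffling of the same ingredients (which in fact sidesteps the norm-equivalence hypothesis needed for \cref{eq:3-18}).
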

\begin{proof}
The proof of the result (i) is exactly the same as that of \cref{lem:3-5}, based on a combination of \cref{lem:3-3} and the discrete Caccioppoli inequality \cref{eq:4-17}. Now we assume that $h\geq \varepsilon$. Let $D_{\delta/2}$ satisfy ${D}\subset D_{\delta/2}\subset{D}^{\ast}$ and ${\rm dist}({D},\,\partial {D}_{\delta/2}\setminus\partial \Omega) = {\rm dist}({D}_{\delta/2},\,\partial {D}^{\ast}\setminus\partial \Omega) = \delta/2$. Applying \cref{lem:3-3} on $D_{\delta/2}$ and $D^{\ast}$ with $\mathcal{S}(D^{\ast}) = V_{h,a,\varepsilon}(D^{\ast})$, we see that there exists an $m$-dimensional space ${\Psi}_{h,m}(D^{\ast})\subset V_{h,a,\varepsilon}(D^{\ast})$ such that for any $u_{h}\in V_{h,a,\varepsilon}(D^{\ast})$,
\begin{equation}\label{eq:4-35}
\inf_{\varphi_{h}\in {\Psi}_{h,m}({D}^{\ast})} \Vert u_{h}-\varphi_{h}\Vert_{L^{2}(D_{\delta/2})}\leq C|{D}^{\ast}|^{1/d}m^{-1/d} \Vert \nabla u_{h} \Vert_{L^{2}({D}^{\ast})}.
\end{equation}
Here we can slightly shrink the domain $D^{\ast}$ in \cref{eq:4-35} if necessary such that it is made of elements. Using an inverse estimate on $D^{\ast}$, we further have
\begin{equation}\label{eq:4-36}
\begin{array}{lll}
{\displaystyle \inf_{\varphi_{h}\in {\Psi}_{h,m}({D}^{\ast})} \Vert u_{h}-\varphi_{h}\Vert_{L^{2}(D_{\delta/2})}\leq Ch^{-1}|{D}^{\ast}|^{1/d}m^{-1/d} \Vert u_{h} \Vert_{L^{2}({D}^{\ast})} }\\[2mm]
{\displaystyle \qquad \qquad \quad \qquad \qquad \qquad \qquad \leq Ch^{-1}|{D}^{\ast}|^{1/d}m^{-1/d}\Vert u_{h} \Vert_{a,\varepsilon,{D}^{\ast}}.}
\end{array}
\end{equation}
Combining \cref{eq:4-36} with the second part of \cref{eq:4-17} yields \cref{eq:4-31}.
\end{proof}

Now we are in the position to prove \cref{thm:4-3}.
\begin{proof}[Proof of \cref{thm:4-3}]
Let $N\in \mathbb{N}$. As in the continuous case, we choose nested domains $\{\omega^{j}\}_{j=1}^{N+1}$ with $\omega=\omega^{N+1}\subset \omega^{N}\subset\cdots\subset\omega^{1} = \omega^{\ast}$ and ${\rm dist}(\omega^{k-1},\partial \omega^{k}\setminus\partial \Omega) = \delta^{\ast}/N$. The proof of the first part of \cref{thm:4-3} is similar to that of \cref{thm:3-2}. By first repeatedly applying \cref{eq:4-30} on $\omega^{1},\cdots,\omega^{N}$, then using \cref{lem:4-5}, and finally taking $n\sim N^{d+1}$, we get \cref{eq:4-13} provided that $h\leq \delta^{\ast}/(4N)\leq c_{0}n^{-1/(d+1)}$.

Next we prove the second part of \cref{thm:3-2}. Let $\sigma_{0}>\sqrt{2}$. Assuming that 
\begin{equation}\label{eq:4-37}
\frac{\delta^{\ast}}{\sigma_{0}^{2}h}\leq N\leq \frac{\delta^{\ast}}{2h}\;\; \Longleftrightarrow \;\;\frac{\delta^{\ast}}{N}>2h,\; \Big(\frac{\delta^{\ast}}{Nh}\Big)^{1/2}\leq \sigma_{0},
\end{equation}
we can apply \cref{lem:4-6} (ii) with $D=\omega^{k-1}$, $D^{\ast} = \omega^{k}$, and $\delta=\delta^{\ast}/N$ to deduce that there exists ${\Psi}_{h,m}({\omega}^{k})\subset V_{h,a,\varepsilon}(\omega^{k})$ such that
\begin{equation}\label{eq:4-38}
\begin{array}{lll}
{\displaystyle \inf_{\varphi_{h}\in {\Psi}_{h,m}({\omega}^{k})} \Vert u_{h}-\varphi_{h}\Vert_{a,\varepsilon,{\omega}^{k-1}}\leq C |{\omega}^{k}|^{1/d}m^{-1/d}\delta^{-1}(\delta/h)^{1/2}\,\Vert u_{h} \Vert_{a,\varepsilon,{\omega}^{k}} }\\[3mm]
{\displaystyle \qquad \qquad \leq C\sigma_{0} |{\omega}^{k}|^{1/d}m^{-1/d}\delta^{-1}\,\Vert u_{h} \Vert_{a,\varepsilon,{\omega}^{k}}\qquad \quad \forall u_{h}\in V_{h,a,\varepsilon}(\omega^{k}).}
\end{array}
\end{equation}
Note that estimate \cref{eq:4-38} is similar to \cref{eq:4-30} except that the constant $C$ is replaced by $C\sigma_{0}$. Hence, \cref{eq:4-14} can also be proved by a similar argument as in the proof of \cref{thm:3-2} and by keeping track of the constant $\sigma_{0}$. We omit the details here. Finally, since we take $n\sim N^{d+1}$ in the proof, condition \cref{eq:4-37} is equivalent to that
\begin{equation}
c_{1}\Big(\frac{\delta^{\ast}}{\sigma_{0}^{2}h}\Big)^{(d+1)}\leq n\leq c_{1}\Big(\frac{\delta^{\ast}}{2h}\Big)^{(d+1)}.
\end{equation}
\end{proof}

The global error estimates of the discrete MS-GFEM are identical to those in the continuous setting, and hence are omitted. 

\section{Numerical experiments}\label{sec:5}
In this section, we present some numerical results to support our theoretical analysis. We consider the problem \cref{eq:2-1} on the unit square, i.e., $\Omega=(0,1)^{2}$, with a scalar diffusion coefficient varying at a scale of 0.01 as illustrated in \cref{fig:5-1}. The right-hand side $f$ is given by
\begin{equation*}
    f({\bm x}) = 10\exp\big(-10(x_{1}-0.15)^{2}-10(x_{2}-0.55)^{2}\big).
\end{equation*}

\begin{figure}\label{fig:5-1}
\centering
\includegraphics[scale=0.3]{./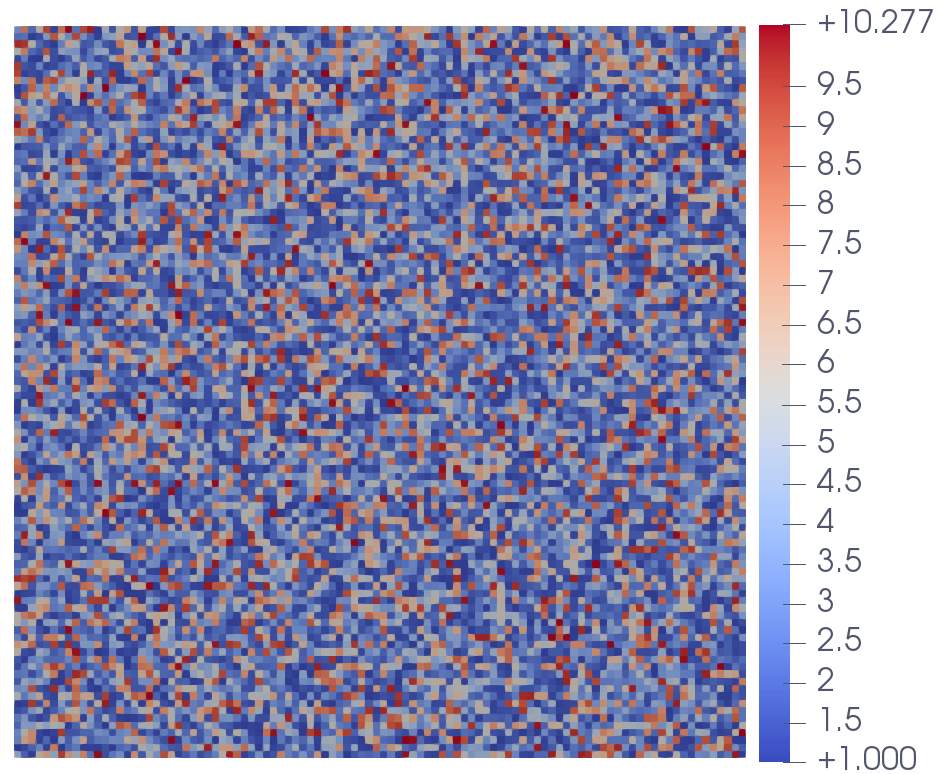}
\caption{The diffusion coefficient $A({\bm x})$.}
\end{figure}

The domain is discretized by a uniform Cartesian grid with $h=10^{-3}$ on which all the local problems in the proposed method are solved. We first partition the domain into $N^{2}$ non-overlapping subdomains $\{\omega_{i}^{\prime}\} $ with boundaries aligned with the fine mesh, and then extend each $\omega_{i}^{\prime}$ by adding 2 layers of fine-mesh elements to create an open cover $\{ \omega_{i}\}$ of the domain. Each subdomain $\omega_{i}$ is further extended by adding $\ell$ layers of fine-mesh elements to create the associated oversampling domain $\omega_{i}^{\ast}$, and hence the oversampling size $\delta^{\ast}$ is $\ell h$. The fine-scale FE solution $u_{h}$, i.e., the solution of the standard FE discretization \cref{eq:2-6} on the fine-mesh, is considered as the reference solution. 


\begin{figure}\label{fig:5-2}
\centering
\includegraphics[scale=0.3]{./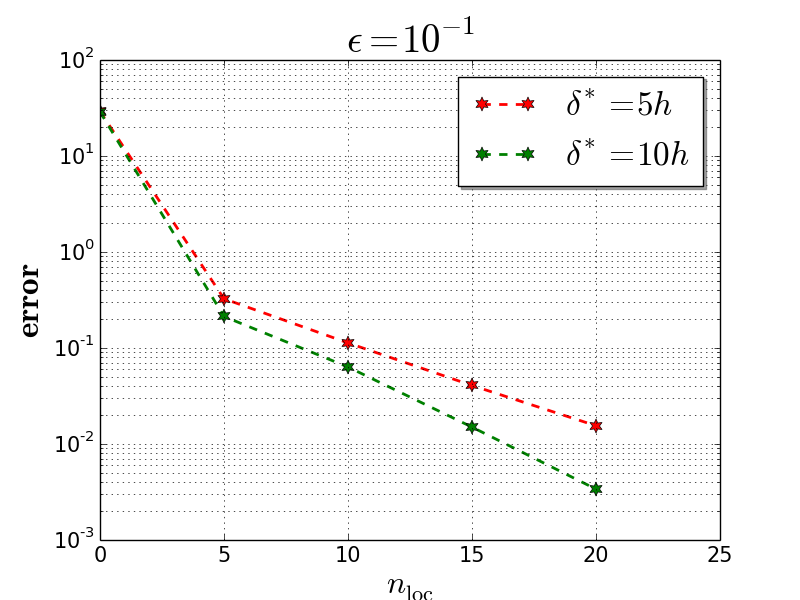}~
\includegraphics[scale=0.3]{./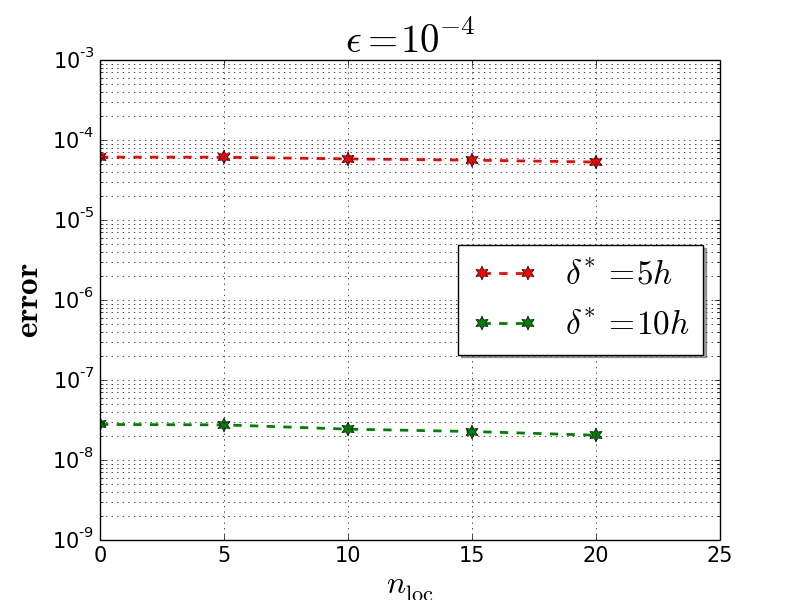}
\caption{Plots of $\Vert u_{h}-u_{h}^{G}\Vert_{a,\varepsilon}$ against $n_{\rm loc}$ (the number of local basis functions used per subdomain) for $\varepsilon=10^{-1}$ and $\varepsilon=10^{-4}$.}

\vspace{-2ex}
\end{figure}

First, we test the performance of our method using the local spectral basis for the problem with $\varepsilon=10^{-1}$ and $\varepsilon=10^{-4}$. The errors measured in the energy norm are plotted in \cref{fig:5-2} in a semi-logarithmic scale with $N=20$. We clearly see that the method behaves dramatically differently for non-singularly perturbed and singularly perturbed problems. For the problem with $\varepsilon=10^{-1}$, it is essential to use the spectral basis for the local approximations, and the errors decay nearly exponentially with respect to the number of local basis functions used per subdomain. For the problem with $\varepsilon=10^{-4}$, however, the global particular function alone can approximate the reference solution very well, and the local spectral basis does little to improve the accuracy of the method.

Next, we test the performance of our method without using the local spectral basis, i.e., by only pasting the local particular functions together to get the approximate solution. Numerical results with $N=10$ for various values of $\varepsilon$ and $\delta^{\ast}$ are shown in \cref{table:covergence history}. We observe that for $\varepsilon=10^{-1}$ and $\varepsilon=10^{-2}$, the errors decay slowly with increasing oversampling size, but for smaller $\varepsilon$, the errors decay rapidly. For a fixed oversampling size, at first the errors decay rapidly as $\varepsilon$ decreases, but they stop decaying when $\varepsilon$ is much smaller than $h$. This behavior agrees with our theoretical prediction; recall that \cref{thm:4-2} shows that the error of the method decays exponentially with respect to $\delta^{\ast}/h$ instead of $\delta^{\ast}/\varepsilon$ in the pre-asymptotic regime. We remark that the rates of convergence with respect to $\varepsilon$ shown by \cref{table:covergence history} are smaller than that predicted by \cref{thm:4-2}, which is due to the fact that the oversampling size used here is not sufficiently large to meet the assumption therein. In fact, for singularly perturbed problems, the method with a moderate oversampling size can achieve an accuracy close to machine precision.

In \cref{fig:5-3}, we plot the reference solution and the global particular solution computed with $N=10$ and $\delta^{\ast}=15h$ for the problem with $\varepsilon = 0.1$ and $\varepsilon = 10^{-3}$. It can be clearly seen that for $\varepsilon = 0.1$, the global particular solution fails to approximate the reference solution well, while for $\varepsilon = 10^{-3}$, it agrees very well with the reference solution.

\setlength\tabcolsep{10pt}
\renewcommand{\arraystretch}{1.5}
\begin{table}
\caption{$\Vert u_{h}-u_{h}^{G}\Vert_{a,\varepsilon}$.}\label{table:covergence history}
\centering
\begin{tabular}{c|cccc}
\hline
$\varepsilon$ &$\delta^{\ast}=5h$&$\delta^{\ast}=10h$&$\delta^{\ast}=15h$&$\delta^{\ast}=20h$\\[0.5ex]
\hline
$10^{-1}$&2.888e+1&2.841e+1&2.793e+1&2.746e+1\\
$10^{-2}$&4.761e-1&3.590e-1&2.701e-1&2.036e-1\\
$10^{-3}$&1.340e-2&1.559e-3&1.749e-4&2.045e-5\\
$10^{-4}$&4.264e-5&1.902e-8&1.066e-11&5.108e-15\\
$10^{-5}$&1.789e-4&2.422e-7&3.277e-10&4.435e-13\\
$10^{-6}$&1.834e-4&2.533e-7&3.497e-10&4.827e-13\\
\end{tabular}
\end{table}
\renewcommand{\arraystretch}{1}

\begin{figure}\label{fig:5-3}
\centering
\includegraphics[scale=0.3]{./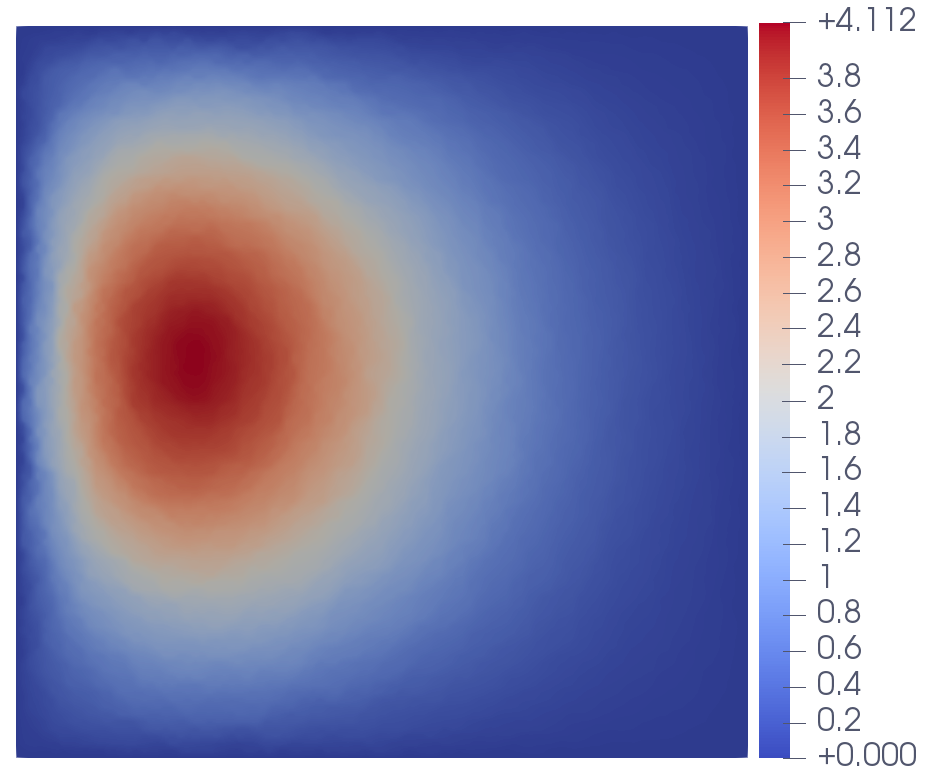}~
\includegraphics[scale=0.3]{./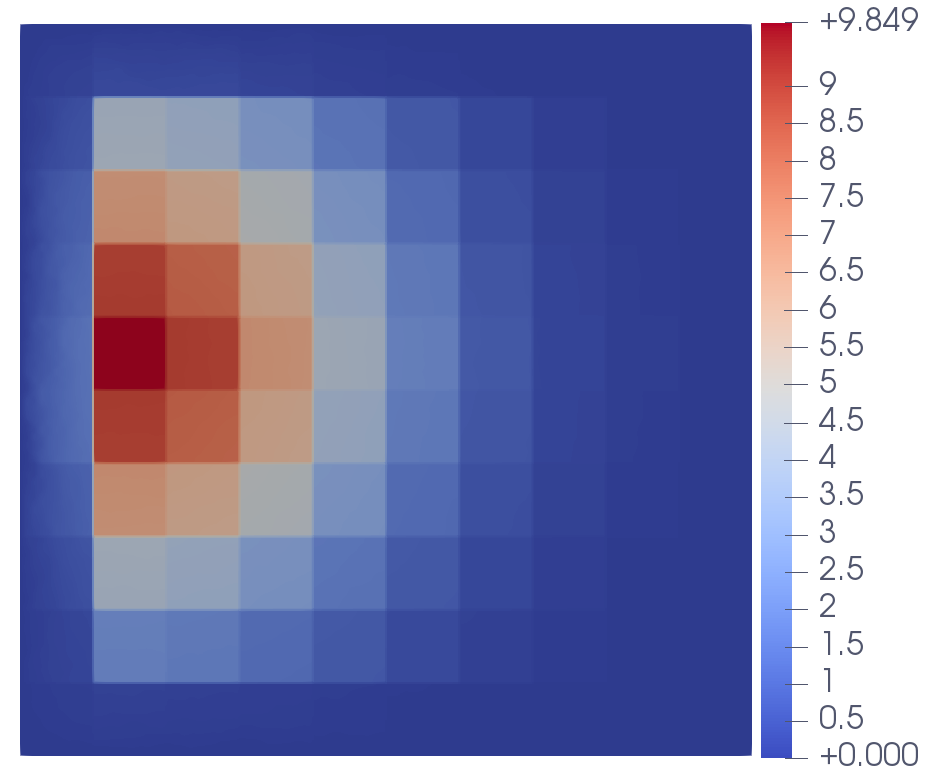}\vspace{1mm}
\includegraphics[scale=0.3]{./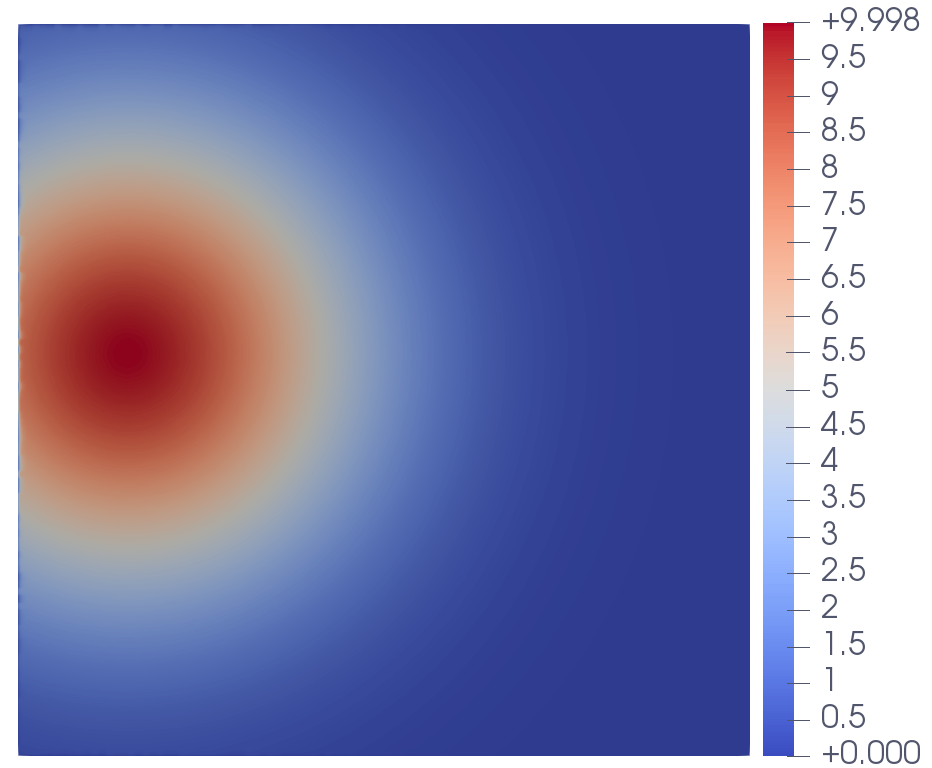}~
\includegraphics[scale=0.3]{./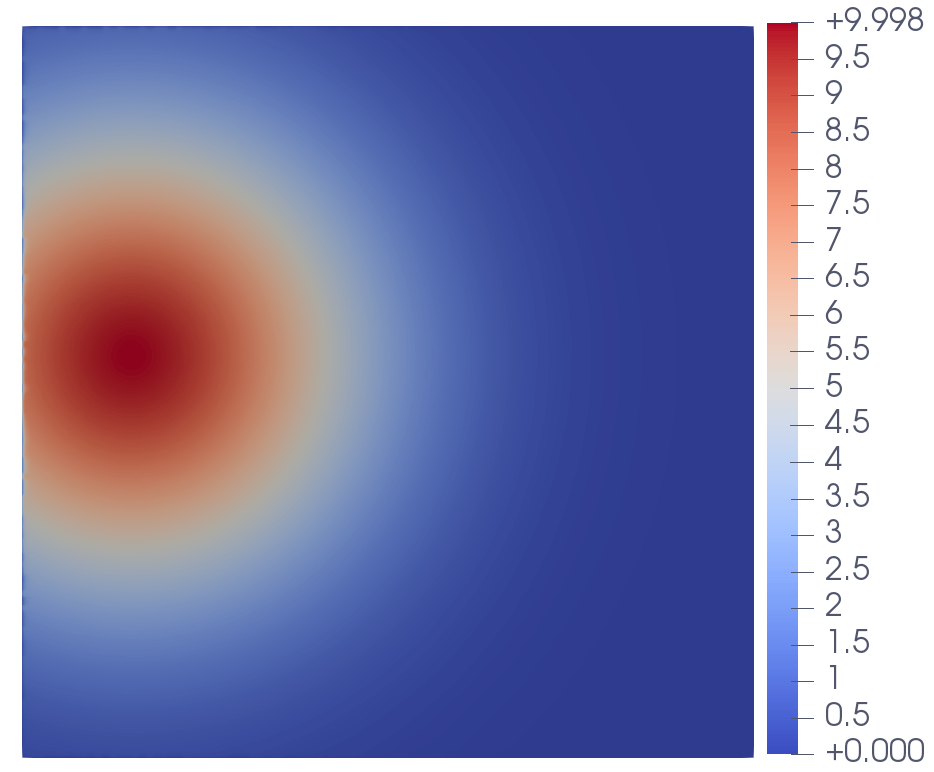}
\caption{The reference solution (left) and the global particular solution (right) for the problem with $\varepsilon=10^{-1}$ (top) and $\varepsilon=10^{-3}$ (down).}
\end{figure}

\section{Conclusions}
We have presented the multiscale spectral GFEM for solving singularly-perturbed reaction-diffusion problems with rough diffusion coefficients. The method was used at the continuous level as a multiscale dicretization scheme of the continous problem, and also at the discrete level as a coarse-space approximation of the fine-scale FE problem. At both levels, $\varepsilon$-explicit and exponential decay rates of the method with respect to the oversampling size and the number of local degrees of freedom were derived. In particular, it was rigorously proved and numerically verified that in the singularly perturbed regime, an accurate approximate solution can be obtained by simply pasting the solutions of local reaction-diffusion problems posed on the oversampling domains together, without using the local spectral basis functions. To the best of our knowledge, this work \mm{is} the first study on multiscale methods for singularly-perturbed heterogeneous reaction-diffusion problems. In the near future, we will investigate the extension of the method to singularly perturbed convection-diffusion problems. Recently, numerical methods for spectral fractional problems were successfully based on numerical methods for singular perturbation problems, \cite{banjai2020exponential}, so that the present work can be the foundation for numerical multiscale fractional diffusion problems.

\appendix
\section{Proof of \cref{lem:3-1}}
\label{sec:A.1}
\begin{proof}
For any $u,v\in H^{1}(\omega_{i}^{\ast})$, the product rule gives 
\begin{equation}\label{eq:3-7}
\begin{array}{lll}
{\displaystyle {a}_{\varepsilon,\omega_{i}^{\ast}}(\eta u,\eta v) = \varepsilon^{2}\int_{\omega_{i}^{\ast}}(A\nabla \eta \cdot \nabla \eta) uv\,d{\bm x} - \varepsilon^{2}\int_{\omega_{i}^{\ast}}(A\nabla u \cdot \nabla \eta) \eta v\,d{\bm x}}\\[4mm]
{\displaystyle \qquad \qquad +\,\varepsilon^{2}\int_{\omega_{i}^{\ast}}(A\nabla \eta \cdot \nabla v) \eta u\,d{\bm x} + {a}_{\varepsilon,\omega_{i}^{\ast}}(u,\eta^{2} v).}
\end{array}
\end{equation}
Exchanging $u$ and $v$ in \cref{eq:3-7} yields that
\begin{equation}\label{eq:3-7-0}
\begin{array}{lll}
{\displaystyle {a}_{\varepsilon,\omega_{i}^{\ast}}(\eta v,\eta u) = \varepsilon^{2}\int_{\omega_{i}^{\ast}}(A\nabla \eta \cdot \nabla \eta) uv\,d{\bm x} - \varepsilon^{2}\int_{\omega_{i}^{\ast}}(A\nabla v \cdot \nabla \eta) \eta u\,d{\bm x}}\\[4mm]
{\displaystyle \qquad \qquad +\,\varepsilon^{2}\int_{\omega_{i}^{\ast}}(A\nabla \eta \cdot \nabla u) \eta v\,d{\bm x} + {a}_{\varepsilon,\omega_{i}^{\ast}}(v,\eta^{2}u).}
\end{array}
\end{equation}
Adding \cref{eq:3-7,eq:3-7-0} together and using the symmetry of the coefficient $A$, we get
\begin{equation}\label{eq:3-8}
{a}_{\varepsilon,\omega_{i}^{\ast}}(\eta u,\eta v) = \varepsilon^{2}\int_{\omega_{i}^{\ast}}(A\nabla \eta \cdot \nabla \eta) uv\,d{\bm x} +\frac{1}{2} \Big( {a}_{\varepsilon,\omega_{i}^{\ast}}(u,\eta^{2} v)+{a}_{\varepsilon,\omega_{i}^{\ast}}(v,\eta^{2}u)\Big).
\end{equation}
By the assumptions on $u$, $v$, and $\eta$, we see that $\eta^{2}u,\eta^{2}v\in H_{0}^{1}(\omega_{i}^{\ast})$. Since $u,v\in H_{a,\varepsilon}(\omega^{\ast}_{i})$, it follows that 
\begin{equation}\label{eq:3-9}
{a}_{\varepsilon,\omega_{i}^{\ast}}(u,\eta^{2} v) = {a}_{\varepsilon,\omega_{i}^{\ast}}(v,\eta^{2}u)=0.
\end{equation}
Inserting \cref{eq:3-9}  into \cref{eq:3-8} gives \cref{eq:3-5}. The inequality \cref{eq:3-6} follows by taking $u=v$ in \cref{eq:3-5} and using \cref{eq:2-2}.
\end{proof}

\bibliographystyle{siamplain}
\bibliography{references}
\end{document}